\newcommand{\bi}{\begin{itemize}}  %begin itemize
\newcommand{\ei}{\end{itemize}}     %end itemize
\newcommand{\bc}{\begin{center}}  %begin center
\newcommand{\ec}{\end{center}}     %end center
\newcommand{\ls}[1]
   {\dimen0=\fontdimen6\the\font \lineskip=#1\dimen0
   \advance\lineskip.5\fontdimen5\the\font \advance\lineskip-\dimen0
   \lineskiplimit=.9\lineskip \baselineskip=\lineskip
   \advance\baselineskip\dimen0 \normallineskip\lineskip
   \normallineskiplimit\lineskiplimit \normalbaselineskip\baselineskip
   \ignorespaces }
\numberwithin{equation}{section}
\newcommand{\slim} {\mathop{\rm lim\,sup}}
\newcommand{\ilim} {\mathop{\rm lim\,inf}}
\newtheorem{lemma}{Lemma}[section]
\newtheorem{theorem}[lemma]{Theorem}
\newtheorem{corollary}[lemma]{Corollary}
\def\X{\mathbb{X}}
\def\A{\mathbb{A}}
\def\H{\mathbb{H}}
\def\R{\mathbb{R}}
\def\P{\mathbb{P}}
\def\F{\mathbb{F}}
\title{Average-Cost Markov Decision Processes with Weakly Continuous Transition Probabilities}
\begin{document}

%\newtheorem{Lemma}{Lemma}[section]
%\newtheorem{Theorem}[Lemma]{Theorem}
%\newtheorem{Corollary}[Lemma]{Corollary}
%\newtheorem{Definition}[Lemma]{Definition}
%\newtheorem{prop}[Lemma]{Proposition}
%\newtheorem{Ex}[Lemma]{Example}
%\newtheorem{Alg}[Lemma]{Algorithm}
%\newtheorem{Remark}[Lemma]{Remark}
%\newtheorem{Assumption}[Lemma]{Assumption}
% How to start and end a proof:

\date{}
\maketitle

\begin{center}
  Eugene~A.~Feinberg \footnote{Department of Applied Mathematics and
Statistics,
 Stony Brook University,
Stony Brook, NY 11794-3600, USA, eugene.feinberg@sunysb.edu}, \
Pavlo~O.~Kasyanov\footnote{Institute for Applied System Analysis,
National Technical University of Ukraine ``Kyiv Polytechnic
Institute'', Peremogy ave., 37, build, 35, 03056, Kyiv, Ukraine,\
kasyanov@i.ua.}, and Nina~V.~Zadoianchuk\footnote{Institute for
Applied System Analysis, National Technical University of Ukraine
``Kyiv Polytechnic Institute'', Peremogy ave., 37, build, 35,
03056, Kyiv, Ukraine,\
ninellll@i.ua.} \\

\bigskip
%July 3, 2007
\end{center}

\begin{abstract}
This paper presents sufficient conditions for the existence of
stationary optimal policies for average-cost Markov Decision
Processes with Borel state and action sets and with weakly
continuous transition probabilities. The one-step cost functions
may be unbounded, and action sets may be noncompact. The main
contributions of this paper are: (i) general sufficient conditions
for the existence of stationary discount-optimal and average-cost
optimal policies and descriptions of properties of value functions
and sets of optimal actions, (ii) a sufficient condition for the
average-cost optimality of a stationary policy in the form of
optimality inequalities, and (iii) approximations of average-cost
optimal actions by discount-optimal actions.
\end{abstract}

%\begin{center}
%\Large{\bf {Constrained Bandit Problems (2006/06/21)}}
%\end{center}
\sloppy \large

\section{Introduction}\label{s1}
This paper provides sufficient conditions for the existence of
stationary optimal policies for average-cost Markov Decision
Processes (MDPs) with Borel state and action sets and with weakly
continuous transition probabilities. The cost functions may be
unbounded and action sets may be noncompact. The main
contributions of this paper are: (i) general sufficient conditions
for the existence of stationary discount-optimal and average-cost
optimal policies and descriptions of properties of value functions
and sets of optimal actions (Theorems~\ref{Prop1}, \ref{teor1},
and \ref{teor3}), (ii) a new sufficient condition of average-cost
optimality based on optimality inequalities
(Theorem~\ref{prop:dcoe}), and (iii) approximations of
average-cost optimal actions by
discount-optimal actions (Theorem~\ref{teorapp}).  %[It also provides sufficient conditions, when several
% average cost per unit time criteria coincide].

 For infinite-horizon MDPs there are two major criteria: average costs per unit time and expected total discounted costs.
 The former is typically more difficult to analyze. The so-called
 vanishing discount factor approach
 is often used to approximate average costs per unit time by  normalized expected total discounted costs.  The literature on
 average-cost MDPs is vast.  Most of the earlier results are
 surveyed in Arapostathis et al.~\cite{Arap}.  Here we mention
 just a few references.

 For finite state and action sets, Derman \cite{Der} proved the
 existence of stationary average-cost optimal policies.  This
 result follows from Blackwell~\cite{Blac} and it also was
 independently proved by Viskov and Shiryaev~\cite{VS}. When either the state set or the action set is infinite,
 even $\epsilon$-optimal policies may not exist for some $\epsilon>0$;
 Ross~\cite{Ross71}, Dynkin and Yushkevich~\cite[Chapter 7]{DY}, Feinberg~\cite[Section 5]{Fein1980}.
For a finite state set and compact action sets,  optimal policies
may not exist; Bather~\cite{Bath},
 Chitashvili~\cite{Chit}, Dynkin and Yushkevich~\cite[Chapter 7]{DY}.

For MDP with finite state and action sets, there exist stationary
policies satisfying optimality equations (see Dynkin and
Yushkevich~\cite[Chapter 7]{DY}, where these equations are called
canonical), and, furthermore, any stationary policy satisfying
optimality equations is optimal. The latter is also true for MDPs
with Borel state and an action sets, if the value and weight (also
called bias) functions are bounded; Dynkin and
Yushkevich~\cite[Chapter 7]{DY}.   When the optimal value of
average
 costs per unit time does not depend on the initial state (the
 optimal value function is constant), the pair of optimality
 equations becomes a single equation. For bounded one-step
 costs, Taylor~\cite{Taylor}, Ross~\cite{Ross68} for a countable state space and  Ross~\cite{Ross68a}, Gubenko and
 Statland~\cite{GS} for a Borel state space provided  sufficient conditions for the
 validity of optimality
 equations with a bounded bias function; see also Dynkin and Yushkevich~\cite[Chapter
 7]{DY}. Under all known sufficient conditions for the existence of average-cost optimal policies for
 infinite-state MDPs, the value function is constant.

 %As a matter of fact, for finite state and action sets, stronger results, than just the
% existence of stationary optimal policies, take place.  One of
% these is the existence of canonical policies, which
% is equivalent to the existence of stationary policies satisfying canonical equations (sometimes
% called
% optimality equations); see Dynkin and Yushkevich~\cite[Chapter 7]{DY} and references therein.  If a canonical equation
% holds
% for an MDP with infinite state and action sets and bounded value and
% weight (also called bias) functions (this is possible
% only when the cost function is bounded) then a stationary policy
% satisfying these equations is optimal; see Dynkin and Yushkevich~\cite[Chapter
% 7]{DY} and references therein.  When the optimal value of average
% costs per unit time does not depend on the initial state (the
% optimal value function is constant), the pair of canonical
% equations becomes a single equation. For bounded one-step
% costs, Taylor~\cite{Taylor}, Ross~\cite{Ross68} for a countable state space and  Ross~\cite{Ross68a}, Gubenko and
% Statland~\cite{GS} for a Borel state space provided  conditions sufficient for canonical
% equations to hold for a bounded bias function; see also Dynkin and Yushkevich~\cite[Chapter 7]{DY}.

In many applications of infinite-state MDPs, one-step costs are
unbounded from above.  For example, holding costs may be unbounded
in queueing and inventory systems.  Sennott~\cite{Senb, Sens} (and
references therein) developed a theory for countable-state
problems with unbounded one-step costs. For unbounded costs,
optimality inequalities are used instead of optimality equations
to construct a stationary average-cost optimal policy.
Cavazos-Cadena~\cite{CC} provided an example, when optimality
inequalities hold while optimality equations do not.

Sch\"al~\cite{Schal} developed a theory for Borel state spaces and
compact action sets.  Two types of continuity assumptions for
transition probabilities are considered in Sch\"al~\cite{Schal}:
the setwise and weak continuity.  For a countable state space
these assumptions  coincide; see Chen and
Feinberg~\cite[Appendix]{CF}. Setwise convergence of probability
measures is stronger than weak convergence;  Hern\'andez-Lerma and
Lasserre~\cite[p. 186]{HLerma1}. Formally speaking, the setwise
continuity assumption for MDPs is not stronger than the weak
continuity assumption, since the former claims that the transition
probabilities are continuous in actions, while they are jointly
continuous in states and actions in the latter. However, the joint
continuity  of transition probabilities in states and actions
often holds in applications. For example, for inventory control
problems with uncountable state spaces, setwise continuity of
transition probabilities takes place if demand is a continuous
random variable, while weak continuity holds for arbitrarily
distributed demand; see Feinberg and Lewis~\cite[Section 4]{FL}.
The importance of weak convergence for practical applications is
mentioned in Hern\'andez-Lerma and Lasserre~\cite[p.
141]{HLL2000}.

In many applications action sets are not compact.
Hern\'andez-Lerma~\cite{HLerma} extended Sch\"al's~\cite{Schal}
results under the setwise continuity assumptions to possibly
noncompact action sets.   Sch\"al's~\cite{Schal} assumptions on
compactness of action sets and  lower semi-continuity of cost
functions in the action argument are replaced in
Hern\'andez-Lerma~\cite{HLerma} by a more general assumption,
namely, that the cost functions are inf-compact in the action
argument.  For weakly continuous transition probabilities and
possibly  noncompact action sets, Feinberg and Lewis~\cite{FL}
proved  the existence of stationary optimal policies for MDPs with
cost functions being inf-compact in both state and action
arguments when, in addition to Sch\"al's~\cite{Schal} boundness
assumption on the relative discounted value at each state, the
so-called local boundness condition was assumed.

The original goal of this study was to show that the results from
Feinberg and Lewis~\cite{FL} hold without local boundness
condition. However, the results of this paper are more general.
This paper provides a weaker boundness condition on the relative
discounted value (Assumption~(${\rm \bf \underline{B}}$) in
Section~\ref{s5}) than Assumption~(${\rm \bf B}$) introduced in
Sch\"al~\cite{Schal}. It also provides a more general and natural
assumption (Assumption~(${\rm \bf W^*}$) in Section~\ref{s3}) than
inf-compactness of the one-step cost function in both arguments.
The main result of this paper, Theorem~\ref{teor1}, establishes
the validity of optimality inequalities and the existence of
stationary optimal policies under Assumptions~(${\rm \bf W^*}$)
and (${\rm \bf \underline{B}}$).

While inf-compactness of the cost function in the action parameter
is a natural assumption, inf-compactness in the state argument is
a more restrictive condition. For example,  when the state space
is unbounded (e.g., the set of nonnegative numbers) and action
sets are compact,  the assumption, that the cost function is
inf-compact in both arguments, does not cover the case of bounded
costs functions studied by Ross~\cite{Ross68a}, Gubenko and
Shtatland~\cite{GS}, and Dynkin and Yushkevich~\cite[Chapter
7]{DY}. Assumption (${\rm \bf W^*}$) covers this case as well as
unbounded costs and noncompact action sets.

As follows from the example presented in  Luque-V\'asquez and
Hern\'andez-Lerma (1995), MDPs with lower-semicontinuous cost
functions may possess pathological properties, even if the
one-step cost function is inf-compact in the action variable.
Assumption (${\rm \bf W^*}$)(ii) removes this difficulty.  As
stated in Lemma~\ref{lm0}, this assumption is weaker than
Sch\"al's~\cite{Schal} compactness and continuity assumptions for
weakly continuous transition probabilities and than
inf-compactness of one-step cost functions in both arguments
(state and action)  assumed in Feinberg and
Lewis~\cite{FL}.  %Though the major focus of this paper is to study
%average costs per unit time, Theorem~\ref{Prop1} presents new
%results for MDPs with expected total discounted costs.

%
%In addition to the man result, Theorem~\ref{teor1}, this paper
%contains the following results:
%
%(i) sufficiency of Condition (${\rm \bf W^*}$) for the existence of
%discount optimal stationary policies and convergence of value
%iterations for expected total discounted costs
%(Theorem~\ref{Prop1});
%
%(ii) approximation of average-cost optimal policies by
%discount-optimal policies (Theorem~\ref{teorapp});
%
%(iii) equivalence of two definitions of average costs per unit
%time and the existence of the limit of average cost per unit time
%for an optimal policy satisfying the optimality inequality
%(Theorem~\ref{teor3}).
%
%Regarding (i), considering cost functions that may not be
%inf-compact in the state arguments is not a trivial issue even for
%discounted criteria.  Earlier attempts to resolve it led to the
%counter-example constructed by Luque-V\'asquez and
%Hern\'andez-Lerma~\cite{LVHL}.  This example models the situation
%when non-compact action sets continuously depend on the state
%parameter.  Here we avoid this difficulty by introducing Assumption
%(${\rm \bf W^*}$)(ii).  In particular, Assumption (${\rm \bf W^*}$)
%holds under Sch\"al's~\cite{Schal} compactess and continuity
%assumptions for weakly continuous transition probabilities and when
%the one-step function is inf-compact in  both arguments, state and
%action, as assumed  in Feinberg and Lewis~\cite{FL}; see
%Lemma~\ref{lm0}.

\section{Model Description}\label{s2}
For a metric space $S$, let ${\mathcal B}(S)$ be a Borel
$\sigma$-field on $S$, that is, the $\sigma$-field generated by all
open sets of metric space $S$.  For a set $E\subset S$, we denote by
${\mathcal B}(E)$ the $\sigma$-field whose elements are
intersections of $E$ with elements of ${\mathcal B}(S)$.  Observe
that $E$ is a metric space with the same metric as on $S$, and
${\mathcal B}(E)$ is its Borel $\sigma$-field. For a metric space
$S$, we denote by $\P(S)$ the set of probability measures on
$(S,{\mathcal B}(S)).$ A sequence of probability measures
$\{\mu_n\}$ from $\P(S)$ converges weakly to $\mu\in\P(S)$ if for
any bounded continuous function $f$ on $S$
\[\int_S f(s)\mu_n(ds)\to \int_S f(s)\mu(ds) \qquad {\rm as \
}n\to\infty.
\]

Consider a discrete-time MDP with a \textit{state space} $\X$, an
\textit{action space} $\mathbb{A},$ one-step costs $c$, and
transition pobabilities $q$. Assume that $\X$ and $\mathbb{A}$ are
\textit{Borel subsets} of Polish (complete separable metric)
spaces with the corresponding metrics $\rho$ and $\gamma$.  For
all $x\in \X$ a nonempty Borel subset $A(x)$ of $\mathbb{A}$
represents the \textit{set of actions} available at $x.$ Define
the graph of ${A}$ by
$$
{\rm Gr} ({A})=\{(x,a) \, : \,  x\in \X, a\in A(x)\}.
$$
Assume also that
%\begin{description}
%\item{(i)}

(i) ${\rm Gr} ({A})$ is a measurable subset of $\X\times
\mathbb{A}$, that is, ${\rm Gr}(A)\in {\mathcal B}({\rm Gr}(A))$,
where ${\mathcal B}({\rm Gr}(A))={\mathcal B} (\X)\otimes
{\mathcal B}(\mathbb{A})$;

%\item{(ii)}
(ii)  there exists a measurable mapping $\phi:\X\to \mathbb{A}$
such that $\phi(x)\in A(x)$ for all $x\in \X;$
%\end{description}

The \textit{one step cost}, $c(x,a)\le +\infty,$ for choosing an
action $a\in A(x)$ in a state $x\in\X,$ is a \textit{bounded below
measurable} function on ${\rm Gr}({A}).$ Let $q(B|x,a)$  be the
\textit{transition kernel} representing the probability that the
next state is in $B\in {\mathcal B}(\X)$, given that the action
$a$ is chosen in the state $x$. This means that:

$\bullet$ $q(\cdot|x,a)$ is a probability measure on
$(\X,{\mathcal B}(\X))$ for all $ (x,a)\in \X\times \mathbb{A}$;

$\bullet$ $q(B|\cdot,\cdot)$ is a Borel function on $({\rm
Gr}({A}),{\mathcal B}({\rm Gr}({A})))$ for all $B\in {\mathcal
B}(\X)$.

\textit{The decision process proceeds as follows}:

$\bullet$ at each time epoch $n=0,1,...$ the current state
$x\in\X$   is observed;

$\bullet$ a decision-maker chooses an action $a\in A(x);$

$\bullet$ the cost $c(x,a)$ is incurred;

$\bullet$ the system moves to the next state according to the
probability law $q(\cdot|x,a).$

\noindent As explained in the text following the proof of
Lemma~\ref{lm00101}, if for each $x\in\X$ there exists $a\in A(x)$
with $c(x,a)<\infty,$ the measurability of ${\rm Gr}(A)$ and
inf-compactness of the cost function $c$ in the action variable
$a$ assumed later imply that assumption (ii) holds.

Let $\mathbb{H}_n=(\X\times \mathbb{A})^n\times \X$ be the
\textit{set of histories} by time $n=0,1,...$ and $ {\mathcal
B}(\mathbb{H}_n)=({\mathcal B}(\X)\otimes {\mathcal
B}(\mathbb{A}))^n\otimes {\mathcal B}(\X)$. A \textit{randomized
decision rule} at epoch $n=0,1,...$ is a regular transition
probability $\pi_n:H_n\to \mathbb{A}$ concentrated on $A(\xi_n)$,
that is, (i) $\pi_n(\cdot\,|\, h_n)$ is a  probability on
$(\A,\mathcal{B}({\A})),$ given the history $h_n=(\xi_0, u_0, \xi_1,
u_1, ..., u_{n-1}, \xi_n)\in\H_n$,  satisfying
$\pi_n(A(\xi_n)|h_n)=1$, and (ii) for all $B\in {\mathcal B}({A})$,
the function $\pi_n(B|\cdot)$ is Borel on $(\mathbb{H}_n,{\mathcal
B}(\mathbb{H}_n)).$ A \textit{ policy} is a sequence
$\pi=\{\pi_n\}_{n= 0,1,\ldots}$ of decision rules. Moreover, $\pi$
is called \textit{nonrandomized}, if each probability measure
$\pi_n(\cdot|h_n)$ is concentrated at one point. A nonrandomized
policy is called \textit{Markov}, if all of the decisions depend
 on the current state and time only. A Markov policy is called
\textit{stationary}, if all the decisions depend on the current
state only. Thus, a Markov policy $\phi$ is defined by a sequence
$\phi_0, \phi_1,\ldots$ of Borel mappings $\phi_n:\X\to\mathbb{A}$
such that $\phi_n(x)\in A(x)$ for all $x\in\X$. A stationary policy
$\phi$ is defined by a Borel mapping $\phi:\X\to\mathbb{A}$ such
that $\phi(x)\in A(x)$ for all $x\in\X$. Let
\[\mathbb{F}=\{\phi:\X\to \mathbb{A} \, : \,   \phi \mbox{ is
Borel and\ } \phi(x)\in A(x)\ {\rm for\ all\ } x\in \X\}\] be the
\textit{set of stationary policies}.

The Ionescu Tulcea theorem (Bertsekas and Shreve \cite[pp.
140-141]{Bert} or Hern\'andez-Lerma and Lassere
\cite[p.178]{HLerma1}) implies that an initial state $x$ and a
policy $\pi$ define a unique probability $P_x^{\pi}$ on the set of
all trajectories $\mathbb{H}_{\infty}=(\X\times
\mathbb{A})^{\infty}$ endowed with the product of $\sigma$-field
defined by Borel $\sigma$-field of $\X$ and $\mathbb{A}.$ Let
$\mathbb{E}_x^{\pi}$ be an expectation with respect to
$P_x^{\pi}$.

For a finite horizon $N=0,1,...,$ let us define the \textit{expected
total discounted costs}
\begin{equation}\label{eq1}
v_{N,\alpha}^{\pi}:=\mathbb{E}_x^{\pi}\sum\limits_{n=0}^{N-1}\alpha^nc(\xi_n,u_n),\qquad\qquad
x\in\X,
\end{equation}
where $\alpha\ge 0$ is the discount factor and
$v_{0,\alpha}^{\pi}(x)=0.$ When $\alpha=1$, we shall write
$v_N^\pi(x)$ instead of $v_{N,1}^\pi(x).$ When $N=\infty$ and
$\alpha\in [0,1)$,  (\ref{eq1}) defines an \textit{infinite horizon
expected total discounted cost}  denoted by $v_\alpha^\pi(x).$

The \textit{average cost per unit time} is defined as
\begin{equation}\label{eq2}
w^{\pi}(x):=\slim\limits_{N\to +\infty}\frac{1}{N}
v_N^\pi(x),\qquad\qquad\ \
x\in\X.\end{equation} %where $J_N$ is the $N$-horizon expected
%total reward,
%\begin{equation}
%J_N^\pi(x):=\mathbb{E}_x^{\pi}\sum\limits_{n=0}^{N-1}c(\xi_n,u_n).
%\end{equation}
For any function $g^{\pi}(x)$, including
$g^{\pi}(x)=v_{N,\alpha}^{\pi}(x)$,
$g^{\pi}(x)=v_{\alpha}^{\pi}(x)$, and $g^{\pi}(x)=w^{\pi}(x)$,
define the \textit{optimal cost} \begin{equation*}
g(x):=\inf\limits_{\pi\in \Pi}g^{\pi}(x), \qquad\qquad\qquad\quad\
\ x\in\X,
\end{equation*} where $\Pi$ is \textit{the set of all policies}.

A policy $\pi$ is called \textit{optimal} for the respective
criterion, if $g^{\pi}(x)=g(x)$ for all $x\in \X.$  For
$g^\pi=v_{n,\alpha}^\pi$, the optimal policy is called
\emph{$n$-horizon discount-optimal}; for $g^\pi=v_{\alpha}^\pi$,
it is called \emph{discount-optimal};  for $g^\pi=w^\pi$, it is
called \emph{average-cost optimal.}

It is well known (see, e.g., Bertsekas and Shreve
\cite[Proposition 8.2]{Bert}) that the functions $v_{n,\alpha}(x)$
recursively satisfy the following \textit{optimality equations}
with $v_{0,\alpha}(x)=0$ for all $x\in \X$,
\begin{equation}\label{eq3_3}
v_{n+1,\alpha}(x)=\inf\limits_{a\in A(x)}\left\{c(x,a)+\alpha
\int_\X v_{n,\alpha}(y)q(dy|x,a)\right\},\quad x\in
\mathbb{X},\,\,n=0,1,...\ .
\end{equation}
In addition, a Markov policy $\phi,$ defined at the first $N$
steps by the mappings $\phi_0,...\phi_{N-1},$ that satisfy for all
$n=1,...,N$ the equations
\begin{equation}\label{eq4_4}
v_{n,\alpha}(x)=c(x,\phi_{N-n}(x))+\alpha\int_\X
v_{n-1,\alpha}(y)q(dy|x,\phi_{N-n}(x)),\quad x\in \mathbb{X},
\end{equation}
is optimal for the horizon $N;$ see e.g. Bertsekas and Shreve
\cite[Lemma 8.7]{Bert}.

It is also well known (Bertsekas and Shreve \cite[Propositions 9.8
and 9.12]{Bert}) that $v_{\alpha}$, where $\alpha\in (0,1]$,
satisfies the following discounted cost optimality equation
(DCOE):
\begin{equation}\label{eq5}
v_{\alpha}(x)=\inf\limits_{a\in A(x)}\left\{c(x,a)+\alpha \int_\X
v_{\alpha}(y)q(dy|x,a)\right\},\quad x\in \X,
\end{equation}
and a stationary policy $\phi_\alpha$ is discount-optimal if and
only if
\begin{equation}\label{eq6}
v_{\alpha}(x)=c(x,\phi_{\alpha}(x))+\alpha \int_\X
v_{\alpha}(y)q(dy|x, \phi_{\alpha}(x)),\quad x\in \X.
\end{equation}

\section{General Assumptions and Auxiliary Results}\label{s3}
Following Sch\"al \cite{Schal}, consider the following assumption.

\textbf{Assumption (${\rm \bf G}$)}. $ w^*:=\inf\limits_{x\in
\X}w(x)<+\infty$.

This assumption is equivalent to the existence of $x\in\X$ and
$\pi\in\Pi$ with $w^\pi(x)<\infty.$  If Assumption~(${\rm \bf G}$)
does not hold then the problem is trivial, because $w(x)=\infty$
for all $x\in\X$ and any policy $\pi$ is average-cost optimal.
Define the following quantities for $\alpha\in [0,1)$:
\begin{equation*}
m_{\alpha}=\inf\limits_{x\in \X}v_{\alpha}(x),\quad
u_{\alpha}(x)=v_{\alpha}(x)-m_{\alpha},
\end{equation*}
\begin{equation*}
\underline{w}=\ilim\limits_{\alpha\uparrow
1}(1-\alpha)m_{\alpha},\quad\overline{w}=\slim\limits_{\alpha\uparrow
1}(1-\alpha)m_{\alpha}.
\end{equation*}
Observe that $u_\alpha(x)\ge 0$ for all $x\in\X.$ According to
Sch\"al \cite[Lemma 1.2]{Schal}, Assumption~(${\rm \bf G}$) implies
\begin{equation}\label{eq:schal} 0\le \underline{w}\le \overline{w}\le w^*<
+\infty.
\end{equation}

According to Sch\"al~\cite[Proposition 1.3]{Schal}, under
Assumption~(${\rm \bf G}$),  if there exists a measurable function
$u:\X\to [0,+\infty)$ and a stationary policy $\phi$ such that
\begin{equation}\label{eq7}
\underline{w}+u(x)\ge c(x, \phi(x))+\int_\X  u(y)q(dy|x,
\phi(x)),\quad x\in \X,
\end{equation}
then $\phi$ is \textit{average-cost optimal} and
$w(x)=w^*=\underline{w}=\overline{w}$ for all $x\in \X.$  Here
need a different form of such a statement.

\begin{theorem}\label{Prop1}
Let Assumption~(${\rm \bf G}$) hold. If there exists a measurable
function $u:\X\to [0,+\infty)$ and a stationary policy $\phi$ such
that
\begin{equation}\label{eq7111}
\overline{w}+u(x)\ge c(x, \phi(x))+\int_\X  u(y)q(dy|x,
\phi(x)),\quad x\in \X,
\end{equation}
then $\phi$ is average-cost optimal and
\begin{equation}\label{eq:7121}
w(x)=w^{\phi}(x)=\slim\limits_{\alpha\uparrow
1}(1-\alpha)v_{\alpha}(x)=\overline{w}=w^*,\quad x\in \X.
\end{equation}
\end{theorem}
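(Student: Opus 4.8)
The plan is to mimic Sch\"al's argument for the analogous statement with $\underline{w}$ (his Proposition~1.3), but to track the upper limit $\overline{w}$ instead of the lower limit $\underline{w}$ throughout, and to combine the resulting inequality with \eqref{eq:schal} to force all four quantities $\underline{w}$, $\overline{w}$, $w^*$ and $w(x)$ to coincide. First I would iterate the hypothesis \eqref{eq7111}: writing it for the stationary policy $\phi$ and using that $q(\cdot|x,\phi(x))$ is a transition probability, I would apply $\mathbb{E}_x^{\phi}$ and sum over $n=0,\ldots,N-1$ along the trajectory $\{\xi_n\}$. Because $u\ge 0$, the telescoped boundary term $\mathbb{E}_x^{\phi}u(\xi_N)$ is nonnegative and can be dropped, yielding
\[
N\,\overline{w}+u(x)\ge \mathbb{E}_x^{\phi}\sum_{n=0}^{N-1}c(\xi_n,u_n)=v_N^{\phi}(x),\qquad x\in\X.
\]
Dividing by $N$ and letting $N\to\infty$ gives $w^{\phi}(x)=\slim_N \frac1N v_N^{\phi}(x)\le\overline{w}$ for every $x\in\X$. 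Here one has to be slightly careful that the iteration of \eqref{eq7111} is legitimate when $c$ or the integrals are infinite; but since the right-hand side is finite (it is dominated by $\overline{w}+u(x)<\infty$), all the quantities appearing are finite and the manipulation is justified.

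Next I would turn the chain of inequalities around. Since $w(x)\le w^{\phi}(x)\le\overline{w}$ for all $x$, taking the infimum over $x$ gives $w^*=\inf_x w(x)\le\overline{w}$. On the other hand \eqref{eq:schal}, which holds under Assumption~(${\rm \bf G}$), gives $\overline{w}\le w^*$. Hence $w^*=\overline{w}$, and therefore $w(x)=w^{\phi}(x)=\overline{w}=w^*$ for all $x\in\X$; in particular $\phi$ is average-cost optimal. It also follows that $\underline{w}=\overline{w}=w^*$, so the limit $\lim_{\alpha\uparrow 1}(1-\alpha)m_\alpha$ exists and equals $w^*$.

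Finally, to get the remaining equality $\slim_{\alpha\uparrow 1}(1-\alpha)v_{\alpha}(x)=\overline{w}$ in \eqref{eq:7121}, I would use $v_\alpha(x)=m_\alpha+u_\alpha(x)$ together with $u_\alpha(x)\ge 0$, which gives $\slim_{\alpha\uparrow 1}(1-\alpha)v_\alpha(x)\ge\slim_{\alpha\uparrow 1}(1-\alpha)m_\alpha=\overline{w}$. For the reverse inequality I would bound $v_\alpha(x)\le v_\alpha^{\phi}(x)$ and estimate the discounted cost of the stationary policy $\phi$ from \eqref{eq7111}: iterating \eqref{eq7111} with discount factor $\alpha<1$ and $u\ge 0$ yields $v_\alpha^\phi(x)\le \dfrac{\overline{w}}{1-\alpha}+u(x)$, hence $(1-\alpha)v_\alpha(x)\le\overline{w}+(1-\alpha)u(x)$, and letting $\alpha\uparrow 1$ gives $\slim_{\alpha\uparrow 1}(1-\alpha)v_\alpha(x)\le\overline{w}$. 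This establishes \eqref{eq:7121}. The main obstacle is not any single deep step but rather the bookkeeping needed to make the iteration of the inequality \eqref{eq7111} rigorous — in particular checking that one may interchange $\mathbb{E}_x^\phi$ with the infinite sum and the limit, which is where nonnegativity of $u$ and $c$ (costs are bounded below, and after subtracting the lower bound one may invoke monotone convergence) does the work.
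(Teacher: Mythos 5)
Your argument is correct and follows essentially the same route as the paper: iterate \eqref{eq7111} using $u\ge 0$ to get $N\overline{w}+u(x)\ge v_N^{\phi}(x)$, divide by $N$ to conclude $w(x)\le w^{\phi}(x)\le\overline{w}$, and combine with $\overline{w}\le w^*$ from \eqref{eq:schal} to force equality throughout. The one place you diverge is the identity $\slim_{\alpha\uparrow 1}(1-\alpha)v_{\alpha}(x)=\overline{w}$: the paper gets the upper bound from the Abelian inequality $\slim_{\alpha\uparrow 1}(1-\alpha)v_{\alpha}^{\pi}(x)\le w^{\pi}(x)$ applied to every $\pi\in\Pi$ followed by an infimum over policies, whereas you iterate \eqref{eq7111} in discounted form to obtain $v_{\alpha}^{\phi}(x)\le\overline{w}/(1-\alpha)+u(x)$ directly. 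Both are valid; your version is more self-contained (no Tauberian/Abelian fact needed beyond what is already in \eqref{eq:schal}) at the cost of a second telescoping computation, and the finiteness bookkeeping you flag is handled exactly as in the undiscounted case, since $\mathbb{E}_x^{\phi}u(\xi_n)<\infty$ propagates by induction from \eqref{eq7111} and the lower bound on $c$. One small caveat: your parenthetical claim that $\underline{w}=\overline{w}=w^*$ ``also follows'' is not justified by what precedes it --- \eqref{eq:schal} only gives $\underline{w}\le\overline{w}\le w^*$, and nothing in the hypotheses of this theorem pins $\underline{w}$ from below --- but since \eqref{eq:7121} makes no assertion about $\underline{w}$ and you never use this remark, it does not affect the proof.
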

\begin{proof}
Similarly to Hern\'andez-Lerma \cite[p.~239]{HLerma} or Sch\"al
\cite[Proposition~1.3]{Schal}, since $u$ is nonnegative, by
iterating (\ref{eq7111}) we obtain
\[
n\overline{w}+u(x)\ge v_n^\phi(x), \quad n\ge 1,\ x\in \X.
\]
Therefore, after dividing the last inequality by $n$ and setting
$n\to\infty$, we have
\begin{equation}\label{eq***}
\overline{w}\ge w^{\phi}(x)\ge w(x)\ge w^*,\quad  x\in \X,
\end{equation}
where the second and the third inequalities follow from the
definitions of $w$ and $w^*$ respectively. Since $\overline{w}\ge
w^*$, inequalities (\ref{eq:schal}) imply that for all $\pi\in\Pi$
\[
w^*=\overline{w} \le \slim\limits_{\alpha\uparrow
1}(1-\alpha)v_{\alpha}(x)\le\slim\limits_{\alpha\uparrow
1}(1-\alpha)v_{\alpha}^{\pi}(x)\le w^{\pi}(x), \qquad \pi\in \Pi,
\ x\in\X.
\]
Finally, we obtain that
\begin{equation}\label{eq3.5}
w^*=\overline{w}\le \slim\limits_{\alpha\uparrow
1}(1-\alpha)v_{\alpha}(x)\le \inf\limits_{\pi\in
\Pi}w^{\pi}(x)=w(x)\le w^{\phi}(x) \le\overline{w},\qquad x\in\X,
\end{equation}
where the last inequality %and correctness of Theorem~\ref{Prop1}
follows from (\ref{eq***}).  Thus all the inequalities in
(\ref{eq3.5}) are equalities.
\end{proof}

Let us set $\mathbb{R}=[-\infty,+\infty)$,
$\mathbb{R}_+=[0,\infty)$, and
$\overline{\mathbb{R}}=\mathbb{R}\cup\{+\infty\}.$ For an
$\overline{\mathbb{R}}$-valued function $f$, defined on a Borel
subset $U$ of a Polish space $\mathbb{Y},$ consider the level sets
\begin{equation}\label{def-D}\mathcal{D}_f(\lambda)=\{y\in U \, : \,  f(y)\le
\lambda\},\end{equation} $-\infty<\lambda< +\infty.$ We recall
that the function $f$ is \textit{lower semi-continuous on $U$} if
all the level sets $\mathcal{D}_f(\lambda)$ are closed and the
function is \textit{inf-compact on $U$} if all these sets are
compact. The level sets $\mathcal{D}_f(\lambda)$ satisfy the
following properties that are used in this paper:

(a) if $\lambda_1>\lambda$ then $\mathcal{D}_f(\lambda)\subseteq
\mathcal{D}_f(\lambda_1);$

(b) if $g,f$ are functions on $U$ satisfying $g(y)\ge f(y)$ for all
$y\in U$ then $\mathcal{D}_g(\lambda)\subseteq
\mathcal{D}_f(\lambda).$

A set is called $\sigma$-compact if it is a union of a countable
number of compact sets.  Denote by $K(\mathbb{A})$ the
\textit{family of all nonempty compact subsets} of $\mathbb{A}$
and by $K_\sigma(\mathbb{A})$ \textit{family of all %nonempty
$\sigma$-compact subsets} of $\mathbb{A}$; $K(\mathbb{A})\subset
K_\sigma(\mathbb{A})$. Also denote by $S(\A)$ the set of nonempty
subsets of $\A.$

A set-valued mapping ${F}:\X \to S(\mathbb{A})$ is \textit{upper
semi-continuous} at $x\in\X$ if, for any neighborhood $G$ of the set
$F(x)$, there is a neighborhood of $x$, say $U(x)$, such that
$F(y)\subseteq G$ for all $y\in U(x)$ (see e.g., Berge
\cite[p.~109]{Ber} or Zgurovsky et al. \cite[Chapter~1,
p.~7]{ZMK1}). A set-valued mapping is called \textit{upper
semi-continuous}, if it is upper semi-continuous at all $x\in\X$.

For weakly continuous transition probabilities, the following
basic assumptions were considered in Sch\"al~\cite{Schal}.

\textbf{Assumption (${\rm \bf W}$)}.

(i) $c$ is lower semi-continuous and bounded below on ${\rm Gr}
({A})$;

(ii) $A(x)\in K(\mathbb{A})$ for $x\in\X$ and $A:\X\to
K(\mathbb{A})$ is upper semi-continuous;

(iii) the transition probability  $q(\cdot|x,a)$ is weakly
continuous in $(x,a)\in {\rm Gr}(A).$

\textit{Weak continuity} of $q$ in $(x,a)$ means that
$$
\int_\X  f(z)q(dz|x_k,a_k)\to \int_\X  f(z)q(dz|x,a),\qquad\qquad
k=1,2,\ldots,
$$
for any sequence $\{(x_k,a_k),k\ge 0\}$ converging to $(x,a),$
where $(x_k,a_k),$ $(x,a)\in {\rm Gr} ({A}),$ and for any bounded
continuous function $f: \X\to \mathbb{R}$. We notice that there is
an additional assumption in Sch\"al~\cite{Schal}, namely,  that
$\X$ is a locally compact space with countable base. However, as
follows from this paper, the assumption is not necessary here as
well as  in Feinberg and Lewis \cite{FL}, since there exists at
least one stationary policy. We also remark that the assumptions
in (${\rm \bf W}$) were presented in a different order here than
in Sch\"al~\cite{Schal}, and that it is assumed in
Sch\"al~\cite{Schal} that $c$ is nonnegative.  Since for
discounted and average cost criteria the cost function can be
shifted by adding any constant, the boundness and nonnegativity of
$c$ are equivalent assumptions. We consider Assumption~(${\rm \bf
Wu}$) from Feinberg and Lewis \cite{FL} without assuming that $\X$
is locally compact.

\textbf{Assumption (${\rm \bf Wu}$)}.

(i) $c$ is inf-compact on ${\rm Gr} ({A})$;

(ii) Assumption (${\rm \bf W}$)(iii) holds.

%
%\textit{Weak continuity} of $q$ in $(x,a)$ means that
%$$
%\int f(z)q(dz|x_k,a_k)\to \int f(z)q(dz|x,a)
%$$
%for any sequence $\{(x_k,a_k),k\ge 0\}$ converging to $(x,a),$
%where $(x_k,a_k),$ $(x,a)\in {\rm Gr} ({A}),$ and for any bounded
%continuous function $f: \X\to \mathbb{R}$.

\textbf{Assumption (${\rm \bf W^*}$)}.

(i) Assumption~(${\rm \bf W}$)(i) holds;

%$(ii)$ if $x_n\to x$, $n\to+\infty$, $a_n\in A(x_n)$ for any $n\ge
%1$ and for some real $\lambda$ the inequalities $c(x_n,a_n)\le
%\lambda$ $\forall n\ge 1$ hold, then the sequence $\{a_n\}_{n\ge
%1}$ has a subsequence $\{a_{n_k}\}_{k\ge 1}$ such that $a_{n_k}\to
%a$, as $k\to+\infty$, for some $a\in A(x)$;

(ii) if a sequence $\{x_n \}_{n=1,2,\ldots}$ with values in $\X$
converges and its limit $x$ belongs to $\X$ then any sequence
$\{a_n \}_{n=1,2,\ldots}$ with $a_n\in A(x_n)$, $n=1,2,\ldots,$
satisfying the condition that the sequence $\{c(x_n,a_n)
\}_{n=1,2,\ldots}$ is bounded above, has a limit point $a\in
A(x);$

(iii) Assumption (${\rm \bf W}$)(iii) holds.

%\begin{remark}\label{rem1}
%Let Assumption \textup{\textbf{(W)}} from Sch${\rm\ddot{a}}$l
%\cite{Schal} holds, i.e. let\\
%(0) $\X$ is locally compact space with countable base;\\
%(1) $A(x)\in K(\mathbb{A})$ for $x\in\X$ and $A:\X\to K(\mathbb{A})$ is upper semi-continuous;\\
%(2) Assumption \textup{\textbf{(Wu)(ii)}} holds;\\
%(3) $c$ is lower semi-continuous and nonnegative (see Sch${\rm\ddot{a}}$l \cite[pp.163, 165]{Schal}).\\
%Then Assumption \textup{\textbf{(${\rm \bf W^*}$)}} is true.
%
%Indeed, let us consider at first Assumption \textup{\textbf{(${\rm
%\bf W^*}$)(ii)}}. Let $x_n\to x$, $n\to+\infty$, $a_n\in A(x_n)$
%for any $n\ge 1$ and for some real $\lambda$ the inequalities
%$c(x_n,a_n)\le \lambda$ $\forall n\ge 1$ hold. Then by Assumption
%\textup{\textbf{(W)}} and Berge \cite[iv, \S~1, Theorem~3]{Ber},
%the set $\cup_{n[\ge 1} A(x_n)$ is relatively compact. Thus the
%sequence $\{a_n\}_{n\ge 1}$ has a subsequence $\{a_{n_k}\}_{k\ge
%1}$ such that $a_{n_k}\to a$, as $k\to+\infty$, for some $a\in
%A(x)$ (cf. Sch${\rm\ddot{a}}$l \cite[Lemma~4]{Schal1}). Thus we
%get that \textbf{\textup{(${\rm \bf W^*}$)(ii)}} holds. We remark
%also that Assumption \textup{\textbf{(${\rm \bf W^*}$)(iii)}}
%coincides with Assumption \textup{\textbf{(W)(2)}} and Assumption
%\textup{\textbf{(W)(3)}} implies Assumption \textup{\textbf{(${\rm
%\bf W^*}$)(i)}}.
%\end{remark}

\begin{lemma}\label{lm0}
The following statements hold:

(i) Assumption (${\rm \bf W}$) implies Assumption (${\rm \bf W^*}$);

(ii) Assumption (${\rm \bf Wu}$) implies Assumption (${\rm \bf
W^*}$).
\end{lemma}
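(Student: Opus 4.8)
The plan is to verify the three parts (i)--(iii) of Assumption~(${\rm \bf W^*}$) separately, under each of the hypotheses (${\rm \bf W}$) and (${\rm \bf Wu}$). Parts (i) and (iii) come almost for free. Assumption~(${\rm \bf W^*}$)(iii) is literally Assumption~(${\rm \bf W}$)(iii), which is contained in both (${\rm \bf W}$) and (${\rm \bf Wu}$). Assumption~(${\rm \bf W^*}$)(i) is literally Assumption~(${\rm \bf W}$)(i), so it holds trivially under (${\rm \bf W}$); under (${\rm \bf Wu}$) it follows because inf-compactness of $c$ on ${\rm Gr}(A)$ forces every level set $\mathcal{D}_c(\lambda)$ to be compact, hence closed, so that $c$ is lower semi-continuous, while boundedness below of $c$ is part of the standing model assumptions (and also follows from inf-compactness, by restricting $c$ to a nonempty compact level set). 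So the whole content of the lemma is Assumption~(${\rm \bf W^*}$)(ii), which I would handle as follows.

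For part (i), assume (${\rm \bf W}$), take a convergent sequence $x_n\to x$ with all $x_n,x\in\X$ and points $a_n\in A(x_n)$; I claim $\{a_n\}$ has a limit point in $A(x)$ (the hypothesis that $\{c(x_n,a_n)\}$ is bounded above is not even needed here). The set $K:=\{x_n:n\ge1\}\cup\{x\}$ is compact, being a convergent sequence together with its limit. Since $A$ is upper semi-continuous with compact values (Assumption~(${\rm \bf W}$)(ii)), Berge's theorem~\cite{Ber} gives that $A(K)=A(x)\cup\bigcup_{n}A(x_n)$ is compact, so $\{a_n\}\subseteq A(K)$ has a subsequence $a_{n_k}\to a\in A(K)$. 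Finally $a\in A(x)$ because an upper semi-continuous, closed-valued map has closed graph (in a metric space, using that metric spaces are regular and compact sets are closed), and $x_{n_k}\to x$, $a_{n_k}\to a$, $a_{n_k}\in A(x_{n_k})$. Alternatively, one can avoid Berge by a direct argument: upper semi-continuity at $x$ places $a_n$ inside the open $1/k$-neighborhood of the compact set $A(x)$ for all $n$ large (depending on $k$), and then picking nearby points of $A(x)$ and passing to a subsequence along which they converge produces the desired limit point; I would keep whichever version is shorter to write.

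For part (ii), assume (${\rm \bf Wu}$); here only Assumption~(${\rm \bf W^*}$)(ii) needs proof. Let $x_n\to x$ in $\X$, $a_n\in A(x_n)$, and suppose $c(x_n,a_n)\le M<+\infty$ for all $n$, so that $(x_n,a_n)\in\mathcal{D}_c(M)$. Inf-compactness of $c$ (Assumption~(${\rm \bf Wu}$)(i)) makes $\mathcal{D}_c(M)$ compact, hence some subsequence $(x_{n_k},a_{n_k})$ converges in $\mathcal{D}_c(M)\subseteq{\rm Gr}(A)$ to a point $(y,a)$; since $x_{n_k}\to x$ we get $y=x$, so $a\in A(x)$ is a limit point of $\{a_n\}$, which finishes Assumption~(${\rm \bf W^*}$)(ii) and the lemma. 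I expect part (i) to be the main obstacle, because there the sequential statement of Assumption~(${\rm \bf W^*}$)(ii) must be read off from the neighborhood formulation of upper semi-continuity; the step needing care is ``upper semi-continuous $+$ compact-valued $\Rightarrow$ compact sets have compact images and the graph is closed.'' Part (ii) is comparatively routine once inf-compactness is unwound, since $\mathcal{D}_c(M)$ is precisely the compact set that captures all tails of the sequence $(x_n,a_n)$.
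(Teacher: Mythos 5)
Your proof is correct and follows essentially the same route as the paper's: for (i) you form the compact set $\{x_n\}\cup\{x\}$, invoke Berge's theorem to get compactness of its image under the upper semi-continuous compact-valued map $A$, extract a convergent subsequence, and use closedness of the graph of $A$ to place the limit in $A(x)$; for (ii) you use compactness of the level set $\mathcal{D}_c(\lambda)$ exactly as the paper does. Your added observations (that only condition (ii) of Assumption~(${\rm \bf W^*}$) requires work, and that the bounded-above hypothesis on $\{c(x_n,a_n)\}$ is not needed under Assumption~(${\rm \bf W}$)) are consistent with the paper's argument.
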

\begin{proof} (i) Let $x_n\to x$ as $n\to\infty$, where
$x\in \X$ and $x_n\in\X$, $n=1,\ldots\ .$  We show that under
Assumption~(${\rm \bf W}$)(ii) any sequence $\{a_n
\}_{n=1,2,\ldots}$ with $a_n\in A(x_n)$ has a limit point $a\in
A(x)$. Indeed, since $\mathcal{K}:=\left(\cup_{n\ge 1}
\{x_n\}\right)\cup\{x\}$ is a compact set and set-valued mapping
$A:\X\to K(\A)$ is upper semi-continuous, then
Berge~\cite[Theorem~3 on p. 110]{Ber} implies that the image
$A(\mathcal{K})$ is also compact. As $\{a_n\}_{n\ge 1}\subset
A(\mathcal{K})$ then the sequence $\{a_n\}_{n\ge 1}$ has a limit
point $a\in \A.$ Consider a sequence $n_k\to \infty$ such that
$a_{n_k}\to a.$   Since $A(z)\in K(\A)$ for all $z\in X$, the
upper-semicontinuous set-valued mapping $A$ is closed and, since
$A$ is closed, $a\in A(x);$ Berge~\cite[Theorems 5 and 6 on pp.
111, 112]{Ber}.

%Second, recall that a set-valued mapping $F:\, \X\to S(\A)$ is called
%\textit{closed} if for any two convergent sequences $x'_n\to x'$
%and $a'_n\to a'$, such that  $x'\in\X,$ $x'_n\in\X$, and $a'_n\in
%F(x'_n)$, $n=1,2,\ldots,$ the inclusion $a'\in F(x')$ holds.
%Equivalently, if as sequence $\{x'_n\in\X\}$ converges to
%$x'\in\X$ and the sequence $\{a'_n\in F(x'_n)\}$ has a limit point
%$a'$ then $a'\in F(x')$.  If $F$ is  upper-semicontinuous  and for
%all $z\in \X$ the sets $F(z)$ are closed then $F$ a closed mapping;
%Theorem . Since $A(z)\in K(X)$ for all $z\in\X,$ $a\in A(x).$

(ii)   Since  $c$ is inf-compact, it is lower-semicontinuous and
bounded below.  We just need to show that  Assumption (${\rm \bf
W^*}$)(ii) holds.  Let us consider $x_n\to x$ as $n\to+\infty$ and
$a_n\in A(x_n)$, $n= 1,,2,\ldots,$ such that $x_n,x\in\X$ and for
some $\lambda<\infty$ the inequality $ c(x_n,a_n)\le \lambda$ holds
for all $n=1,2,\ldots\ .$ Then, by inf-compactness of $c$ on ${\rm
Gr}(A)$, the level set $\mathcal{D}_c(\lambda)$ is compact. Thus the
sequence $\{x_n,a_n\}_{n\ge 1}$ has a limit point $(x,a)\in
\mathcal{D}_c(\lambda)\subseteq{\rm Gr}(A)$. Since $(x,a)\in {\rm
Gr}(A)$, we have $a\in A(x).$
\end{proof}

For any $\alpha\ge 0$ and lower semi-continuous nonnegative
function $u:\X\to \overline{\mathbb{R}}$, we consider an
\textit{operation} $\eta_u^\alpha$,
\begin{equation}\label{e:defeta}
\eta_u^\alpha(x,a)=c(x,a)+\alpha\int_\X  u(y)q(dy| x,a),
\quad(x,a)\in {\rm Gr}(A).
\end{equation}

Let $L(\mathbb{X})$ be the class of all lower semi-continuous and
bounded below functions $\varphi:\mathbb{X}\to\overline{\mathbb{R}}$
with ${\rm dom\,}\varphi:=\{x\in\X\, : \, \varphi(x)<+\infty\}\ne
\emptyset$. Observe that $\eta_u^\alpha=\eta_{\alpha u}^1  $.

\begin{lemma}\label{lm00101} For any $x\in\X$ the following
statements hold:

(a) under Assumption ${\rm \bf W^*}$(ii), the function
$c(x,\cdot)$ is inf-compact on $A(x)$;

(b) under Assumptions ${\rm \bf W^*}$(ii,iii), for any $u\in
L(\mathbb{X})$ and $\alpha\ge 0$, the function
$\eta_u^\alpha(x,\cdot)$ is inf-compact on $A(x)$.
\end{lemma}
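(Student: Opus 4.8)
The plan is to prove part~(a) directly from Assumption~(${\rm \bf W^*}$)(ii) together with the lower semi-continuity of $c$, and then to deduce part~(b) from~(a) by a domination argument. For~(a), fix $x\in\X$ and $\lambda\in(-\infty,+\infty)$ and consider $\mathcal{D}_{c(x,\cdot)}(\lambda)=\{a\in A(x)\, : \,c(x,a)\le\lambda\}$. Since $\A$ is metrizable, it suffices to verify that this set is sequentially compact. Given $\{a_n\}_{n\ge1}\subseteq\mathcal{D}_{c(x,\cdot)}(\lambda)$, I would apply Assumption~(${\rm \bf W^*}$)(ii) to the constant sequence $x_n\equiv x$ (which converges to $x\in\X$), with $a_n\in A(x_n)=A(x)$ and $c(x_n,a_n)=c(x,a_n)\le\lambda$ bounded above; this produces a limit point $a\in A(x)$ of $\{a_n\}$, and along a subsequence $a_{n_k}\to a$ one has $(x,a_{n_k})\to(x,a)$ in ${\rm Gr}(A)$, so lower semi-continuity of $c$ on ${\rm Gr}(A)$ gives $c(x,a)\le\liminf_{k\to\infty}c(x,a_{n_k})\le\lambda$, i.e.\ $a\in\mathcal{D}_{c(x,\cdot)}(\lambda)$. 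Hence $\mathcal{D}_{c(x,\cdot)}(\lambda)$ is compact, i.e.\ $c(x,\cdot)$ is inf-compact on $A(x)$.

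For~(b), I would first reduce to the case $u\ge 0$: since $u\in L(\X)$ is bounded below, $m:=\inf_{y\in\X}u(y)$ is finite, and because each $q(\cdot|x,a)$ is a probability measure, $\eta_u^\alpha(x,\cdot)=\eta_{u-m}^\alpha(x,\cdot)+\alpha m$ on $A(x)$, so replacing $u$ by the nonnegative lower semi-continuous function $u-m$ only translates every level set and does not affect inf-compactness. Assuming $u\ge0$, the key step is to show that $\eta_u^\alpha(x,\cdot)$ is lower semi-continuous on $A(x)$: if $a_n\to a$ with $a_n,a\in A(x)$, then $(x,a_n)\to(x,a)$ in ${\rm Gr}(A)$, hence $q(\cdot|x,a_n)\to q(\cdot|x,a)$ weakly by Assumption~(${\rm \bf W^*}$)(iii), and combining $\liminf_{n\to\infty}\int_\X u(y)\,q(dy|x,a_n)\ge\int_\X u(y)\,q(dy|x,a)$ (the Fatou-type inequality discussed below) with $\liminf_{n\to\infty}c(x,a_n)\ge c(x,a)$ and $\alpha\ge0$ yields $\liminf_{n\to\infty}\eta_u^\alpha(x,a_n)\ge\eta_u^\alpha(x,a)$. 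Finally, since $\alpha\ge0$ and $u\ge0$ we have $\eta_u^\alpha(x,a)\ge c(x,a)$ on $A(x)$, whence $\mathcal{D}_{\eta_u^\alpha(x,\cdot)}(\lambda)\subseteq\mathcal{D}_{c(x,\cdot)}(\lambda)$; the latter set is compact by~(a), so every sequence in $\mathcal{D}_{\eta_u^\alpha(x,\cdot)}(\lambda)$ has a subsequence converging to some $a\in A(x)$, which lies in $\mathcal{D}_{\eta_u^\alpha(x,\cdot)}(\lambda)$ by the lower semi-continuity just established. Thus $\mathcal{D}_{\eta_u^\alpha(x,\cdot)}(\lambda)$ is compact for every $\lambda$, which proves~(b).

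The one ingredient that is not purely formal is the \emph{Fatou-type inequality for weakly converging measures}: $\liminf_{n\to\infty}\int_\X u\,d\mu_n\ge\int_\X u\,d\mu$ whenever $\mu_n\to\mu$ weakly in $\P(\X)$ and $u:\X\to[0,+\infty]$ is lower semi-continuous; I expect this to be the step requiring the most care. One option is to quote it as a standard consequence of the portmanteau theorem (e.g.\ Bertsekas and Shreve~\cite{Bert} or Hern\'andez-Lerma and Lasserre~\cite{HLerma1}); another is to prove it in a line by writing $u=\sup_{k\ge1}u_k$ as a pointwise increasing limit of nonnegative bounded continuous functions $u_k$ (available on a metric space, e.g.\ $u_k=\min\{k,\inf_{z\in\X}[u(z)+k\rho(\cdot,z)]\}$), noting that $\int_\X u_k\,d\mu=\lim_{n\to\infty}\int_\X u_k\,d\mu_n\le\liminf_{n\to\infty}\int_\X u\,d\mu_n$ for each $k$, and then taking the supremum over $k$.
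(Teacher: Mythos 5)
Your proof is correct and follows essentially the same route as the paper: part (a) reads compactness of the level sets $\mathcal{D}_{c(x,\cdot)}(\lambda)$ off Assumption~(${\rm \bf W^*}$)(ii) applied to the constant sequence $x_n\equiv x$, and part (b) combines the inf-compactness of $c(x,\cdot)$ from (a) with lower semi-continuity and boundedness below of $a\mapsto\int_\X u(y)q(dy|x,a)$ --- you merely make explicit, via the domination $\eta_u^\alpha(x,\cdot)\ge c(x,\cdot)+{\rm const}$ and a proof of the Fatou-type inequality for weakly convergent measures, what the paper delegates to the one-line facts that the integral term is lower semi-continuous (cited from Hern\'andez-Lerma and Lasserre) and that the sum of an inf-compact function and a bounded-below lower semi-continuous function is inf-compact. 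The only point worth flagging is that in (a) you invoke lower semi-continuity of $c$ (i.e., Assumption~(${\rm \bf W^*}$)(i)) to keep the limit point inside the level set, which is not among the hypotheses listed for that part; this ingredient is genuinely needed (the paper's own one-sentence proof of (a) glosses over it), and the discrepancy is harmless since the lemma is only ever applied under the full Assumption~(${\rm \bf W^*}$).
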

\begin{proof}
(a) For an arbitrary $\lambda\in\R$ and fixed $x\in \X$, consider
the set $\mathcal{D}_{c(x,\cdot)}(\lambda)=\{a\in A(x)\,:\,
c(x,a)\le\lambda\}.$  Assumption ${\rm \bf W^*}$(ii) means, that
this set is compact.  Thus, (i) is proved.

(b) Fix $x\in\X$ again.  Since $u\in L(\X)$ and $q$ is weakly
continuous in $a$, the second summand in (\ref{e:defeta}) is a lower
semi-continuous function on $A(x)$ (Hern\'ndez-Lerma and
Lasserre~\cite[p. 185]{HLerma1}) and it is bounded below by the same
constant as $u.$ According to statement (i), $c(x,\cdot)$ is
inf-compact on $A(x)$. The sum of an inf-compact function and a
bounded below lower semi-continuous function is an inf-continuous
function.
\end{proof}

A measurable mapping $\phi:\, \X\to\A$, such that $\phi(x)\in A(x)$
for all $x\in \X$, is called a selector (or a measurable selector).
In our case, selectors and decision rules are the same objects.
Since we identify a stationary policy with a decision rule,
selectors and stationary policies are the same objects. The
existence of selector for the mapping $A$ is the necessary and
sufficient condition for the existence of a policy.   Let
$E\subseteq \X\times\A$ and ${\rm proj}_{\X}\ E=\{x\in\X\,:\,
(x,a)\in E {\rm \ for\ some\ }a\in E\}$ be a projection  of $E$ on
$X$.  A Borel map $f:\ {\rm proj}_{\X}\ E \to \A$ is called a Borel
uniformization of $E$, if $(x,f(x))\in E$ for all $x\in {\rm
proj}_{\X}\ E$. Let $E_x=\{a\,:\, (x,a)\in E\}$ be a cut of $E$ at
$x\in\X.$

\vskip 0.2 cm

\noindent \textbf{Arsenin-Kunugui Theorem}  (Kechris~\cite[p.
297]{Kec}) \textit{ If $E$ is a Borel subset of  $\X\times\A$ and
$E_x\in K_\sigma(\A)$ for all $x\in \X$ then there exists a Borel
uniformization of $E$ and ${\rm proj}_{\X}\ E$ is a Borel set.}

\vskip 0.2 cm

We remark that it is assumed in Kechris~\cite[p. 297]{Kec}) that
$\X$ is a standard Borel space (that is, isomorphic to a Borel
subset of a Polish space) and $\A$ is a Polish space.  Here $\X$
and $\A$ are Borel subsets of Polish spaces.  These two
formulations are obviously equivalent.

We recall that ${\rm Gr}(A)$ is assumed to be Borel and
$A(x)\ne\emptyset,$ $x\in\X.$ With $E={\rm Gr}(A)$, Arsenin-Kunugui
Theorem implies the existence of a stationary policy under the
assumption $A(x)\in K(\A),$ $x\in\X.$  Thus, Assumption~(${\rm \bf
W}$) implies the existence of a policy for the MDP.

Let  Assumption~(${\rm \bf W^*}$) hold.  Set $F(x)=\{a\in
A(x)\,:\, c(x,a)<\infty\},$ $x\in\X.$ In view of
Lemma~\ref{lm00101},
$F(x)=\cup_{n\in\{1,2,\ldots\}}\mathcal{D}_{c(x,\cdot)}(n)\in
K_\sigma(\A).$  In addition, ${\rm Gr}(F)=\{(x,a)\in {\rm
Gr}(A)\,:\, c(x,a)<\infty\}$ is a Borel subset of $\X\times\A.$
Thus, if the function $c$ takes only finite values, a stationary
policy exists in view of Arsenin-Kunugui Theorem.

Of course, if it is possible that $c(x,a)=\infty$, a
uniformization may not exist. For example, this takes place when
$c(x,a)=\infty$ for all $(x,a)\in {\rm Gr}(A)$ and ${\rm Gr}(A)$
does not have a measurable selector. However $c(x,a)=\infty$ means
from a modeling prospective that this state-action pair should be
excluded, because selecting $a$ in $x$ leads to the worst possible
result.  If there are state-action pairs $(x,a)$ with
$c(x,a)=\infty$ and ${\rm Gr}(A)$ does not have a uniformization,
the MDP can be transformed into an MDP modeling the same problem
and with a nonempty set of policies. Let us exclude the situation
when $c(x,a)=\infty$ for all $(x,a)\in {\rm Gr}(A)$, because it is
trivial: all the actions are bad. Define $X={\rm proj}_{\X}\ {\rm
Gr}(F)$ and $Y=\X\setminus X.$ Under Assumption~(${\rm \bf W}^*$),
Arsenin-Kunigui Theorem implies that $X$ is Borel and there exist
a Borel mapping $f$ from $X$ to $\A$ such that $f(x)\in F(x)$ for
all $x\in X.$ If $Y=\emptyset$ (that is, there exists an action
$a\in A(x)$ with $c(x,a)<\infty$ for each $x\in\X$) then $\phi=f$
is a stationary policy.

Let us consider the situation when $Y\ne\emptyset.$  In such an
MDP, as soon as the state is in $Y$, the losses are infinite and
there is no reason to model the process after this.  Let us
transform the model by  choosing any $x^*\in Y$ and any $a^*\in\A$
and setting the new state set $\X^*=X\cup \{x^*\}$, keeping the
original action set $\A$, setting new action sets $A^*(x)=F(x)$
for $x\in X$ and $A^*(x^*)=\{a^* \},$ defining the new cost
function
\begin{equation*}
c^*(x,a)=\begin{cases} c(x,a), &{\rm if\ }
x\in Y  {\rm \ and}\  a\in F(x), \\
\infty, &{\rm if\ } x=x^*\ {\rm and}\ a=a^*.
\end{cases}
\end{equation*}
and considering new transition probabilities defined for $x\in
X^*$ and $a\in A^*(x)$ by
\begin{equation*}
q^*(B|x,a)=\begin{cases} q(B|x,a), &{\rm if\ }
B\subseteq X,\ B\in {\cal B}(\X),\ {\rm and}\ \ x\in X,\\
q(Y|x,a), &{\rm if\ }  B=\{x^*\},\  {\rm and}\ x\in X,\\
1, &{\rm if\ } B=\{x^*\}\ {\rm and}\ x=x^*.
\end{cases}
\end{equation*}
The new MDP is nontrivial in the sense that the set of policies is
not empty.  Finding an optimal policy for this MDP is equivalent to
finding a policy for the original MDP until its first exit time from
$X$, and in both cases the process incurs infinite losses, if it
leaves $X$.   So, the original and the new MDP model are the same
problem.

\begin{lemma}\label{lm1}
If  Assumption~(${\rm \bf W^*}$) holds and $u\in L(\mathbb{X})$,
then the function
\begin{equation}\label{eq:lm10}
u^*(x):=\inf\limits_{a\in A(x)}\big[c(x,a)+\int_\X
u(y)q(dy|x,a)\big],\qquad x\in \mathbb{X},
\end{equation}
belongs to $L(\mathbb{X}),$ and there exists $f\in \mathbb{F}$
such that
\begin{equation}\label{eq:lm00}
u^*(x)=c(x,f(x))+\int_\X  u(y)q(dy|x,f(x)),\qquad x\in \mathbb{X}.
\end{equation}
Moreover, infimum in (\ref{eq:lm10}) can be replaced by minimum, and
the  nonempty sets
\begin{equation}\label{eq:lm000}
A_*(x)=\left\{a\in A(x):\,u^*(x)=c(x,a)+ \int_\X
u(y)q(dy|x,a)\right\},\qquad x\in \X,
\end{equation}
satisfy the following properties:

(a) the graph ${\rm Gr}(A_*)=\{(x,a):\, x\in\X, a\in A_*(x)\}$ is
a Borel subset of $\X\times \mathbb{A}$;

 (b) if $u^*(x)=+\infty$,
then $A_*(x)=A(x)$, and, if $u^*(x)<+\infty$, then $A_*(x)$ is
compact.
\end{lemma}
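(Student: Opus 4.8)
The plan is to reduce everything to Lemma~\ref{lm00101}(b). Write $\eta=\eta_u^1$, so that $\eta(x,a)=c(x,a)+\int_\X u(y)q(dy|x,a)$ and $u^*(x)=\inf_{a\in A(x)}\eta(x,a)$; since $\eta(x,\cdot)\ge u^*(x)$ on $A(x)$, we have $A_*(x)=\mathcal D_{\eta(x,\cdot)}(u^*(x))$. By Lemma~\ref{lm00101}(b) the function $\eta(x,\cdot)$ is inf-compact on $A(x)$ for every $x\in\X$. Consequently, if $u^*(x)<+\infty$, then $A_*(x)$ is the intersection of the decreasing family of the nonempty compact sets $\mathcal D_{\eta(x,\cdot)}(u^*(x)+1/k)$, $k\ge 1$, hence nonempty and compact; and if $u^*(x)=+\infty$, then $\eta(x,a)=+\infty$ for all $a\in A(x)$, so $A_*(x)=A(x)$. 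This already yields statement (b), shows that the infimum in (\ref{eq:lm10}) is attained (so it may be replaced by minimum), and shows $A_*(x)\ne\emptyset$ for all $x$.

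The main step is lower semi-continuity of $u^*$. Let $x_n\to x$ with $x_n,x\in\X$, and assume $L:=\ilim_n u^*(x_n)<+\infty$ (otherwise there is nothing to prove). Passing to a subsequence we may assume $u^*(x_n)\to L$ and $u^*(x_n)<+\infty$ for all $n$; choose $a_n\in A_*(x_n)$, so $\eta(x_n,a_n)=u^*(x_n)$. If $u\ge -M$, then $c(x_n,a_n)\le\eta(x_n,a_n)+M=u^*(x_n)+M$ is bounded above, so by Assumption~(${\rm \bf W^*}$)(ii) the sequence $\{a_n\}$ has a limit point $a\in A(x)$; along a subsequence $a_{n_k}\to a$, hence $(x_{n_k},a_{n_k})\to(x,a)$ in ${\rm Gr}(A)$. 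By lower semi-continuity of $c$ (Assumption~(${\rm \bf W^*}$)(i)), $\ilim_k c(x_{n_k},a_{n_k})\ge c(x,a)$, and by weak continuity of $q$ (Assumption~(${\rm \bf W^*}$)(iii)) together with $u$ being lower semi-continuous and bounded below, $\ilim_k\int_\X u\,q(dy|x_{n_k},a_{n_k})\ge\int_\X u\,q(dy|x,a)$ (approximate $u$ from below by an increasing sequence of bounded continuous functions and use the portmanteau theorem and monotone convergence). Adding the two inequalities — with no indeterminacy, since both terms are bounded below — gives $\eta(x,a)\le\ilim_k\eta(x_{n_k},a_{n_k})=\lim_k u^*(x_{n_k})=L$, so $u^*(x)\le\eta(x,a)\le L=\ilim_n u^*(x_n)$. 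Thus $u^*$ is lower semi-continuous; it is bounded below (by the lower bounds of $c$ and of $u$), and, its domain being nonempty, $u^*\in L(\X)$.

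For the selector, note first that $\eta$ is a Borel function on ${\rm Gr}(A)$: $c$ is Borel, and $(x,a)\mapsto\int_\X u\,q(dy|x,a)$ is the increasing limit of the functions $(x,a)\mapsto\int_\X u_k\,q(dy|x,a)$, which are continuous by weak continuity of $q$, where $u_k\uparrow u$ are bounded continuous. Since $u^*$ is lower semi-continuous it is Borel, so, using $\eta(x,\cdot)\ge u^*(x)$ once more,
\[
{\rm Gr}(A_*)=\{(x,a)\in{\rm Gr}(A):\,\eta(x,a)\le u^*(x)\}
\]
is a Borel subset of $\X\times\A$, which is statement (a). Now split $\X$ into the Borel sets $X_1=\{x:u^*(x)<+\infty\}$ and $X_2=\X\setminus X_1$. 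On $X_1$ the cuts of ${\rm Gr}(A_*)$ are compact, hence belong to $K_\sigma(\A)$, so the Arsenin-Kunugui Theorem provides a Borel map $f_1$ on $X_1$ with $f_1(x)\in A_*(x)$. On $X_2$ we have $A_*(x)=A(x)$, so the measurable selector $\phi$ of $A$ from assumption~(ii) of the model restricts to a Borel map on $X_2$ with values in $A_*$. The map $f$ equal to $f_1$ on $X_1$ and to $\phi$ on $X_2$ is a Borel selector with $f(x)\in A_*(x)$ for all $x$, i.e., $u^*(x)=c(x,f(x))+\int_\X u(y)q(dy|x,f(x))$ for all $x\in\X$, which is (\ref{eq:lm00}), and $f\in\mathbb F$ is the required stationary policy.

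I expect the lower-semicontinuity argument to be the main obstacle: the delicate point is to couple the extraction of a convergent subsequence of actions — which is exactly what Assumption~(${\rm \bf W^*}$)(ii) supplies, given the a priori upper bound on $c(x_n,a_n)$ — with the lower semi-continuity of the integral term $\int_\X u\,q(dy|\cdot,\cdot)$ under weak convergence of the transition kernels and with $u$ merely lower semi-continuous. A secondary technical point is that the Arsenin-Kunugui Theorem cannot be applied directly to ${\rm Gr}(A_*)$, because its cuts need not be $\sigma$-compact on the set $\{u^*=+\infty\}$; this is handled by splitting off $X_2$ and using the given measurable selector of $A$ there.
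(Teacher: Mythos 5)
Your proof is correct and follows essentially the same route as the paper's: inf-compactness of $\eta_u^1(x,\cdot)$ from Lemma~\ref{lm00101}, the extraction of a convergent subsequence of minimizers via Assumption~(${\rm \bf W^*}$)(ii) to get lower semi-continuity of $u^*$, Borel measurability of ${\rm Gr}(A_*)$ from the Borel measurability of $\eta_u^1$ and $u^*$, and the Arsenin--Kunugui theorem applied on $\{u^*<+\infty\}$ combined with the given selector of $A$ on $\{u^*=+\infty\}$. The only differences are cosmetic (you spell out the lower semi-continuity of the integral term by monotone approximation where the paper cites Hern\'andez-Lerma and Lasserre, and you obtain nonemptiness of $A_*(x)$ via nested compact level sets rather than directly as the level set at height $u^*(x)$).
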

\begin{proof}
Under Assumption~(${\rm \bf W^*}$), for any lower semi-continuous on
$\X$, bounded below function $u:\X\to \overline{\mathbb{R}}$ and
$\alpha\in(0,1]$, the function $\eta_{u(x,\cdot)}^\alpha$ is
inf-compact on $A(x)$, $x\in\X$. This follows from
Lemma~\ref{lm00101}. Thus, infimum in (\ref{eq:lm10}) can be
replaced by minimum and $A^*(x)$ is nonempty for any $x\in \X$.

Now we show that $u^*$ is lower semi-continuous on $\X$. Let us
fix an arbitrary $x\in\X$ and any sequence $x_n\to x$ as
$n\to+\infty$. We need to prove the inequality
\begin{equation}\label{eq:lm11}
u^*(x)\le\ilim\limits_{n\to+\infty}u^*(x_n).
\end{equation}
If $\ilim\limits_{n\to+\infty}u^*(x_n)=+\infty$, then
(\ref{eq:lm11}) obviously holds. Thus we consider the case, when
$\ilim\limits_{n\to+\infty}u^*(x_n)<+\infty$. There exists a
subsequence $\{x_{n_k}\}_{k\ge 1}\subseteq\{x_n\}_{n\ge 1}$ such
that
\[
\ilim\limits_{n\to+\infty}u^*(x_n)=\lim\limits_{k\to+\infty}u^*(x_{n_k}).
\]
Setting $\lambda=\lim\limits_{k\to+\infty}u^*(x_{n_k})+1$, we get
the inequality $u^*(x_{n_k})\le\lambda$ for all  $ k\ge K$, where
$K$ is some natural number. Since the function $\eta_u^1$ is
inf-compact on ${\rm Gr}(A)$, equation (\ref{eq:lm10}) can be
rewritten as
\[
u^*(x):=\min\limits_{a\in A(x)}\eta_u^1(x,a),\,\, x\in \mathbb{X}.
\]
Thus, for any $k\ge K$ there exists $a_k\in A(x_{n_k})$ such that
$u^*(x_{n_k})=\eta_u^1(x_{n_k},a_k)$. Therefore,
\[
 c(x_{n_k},a_k)\le \eta_u^1(x_{n_k},a_k)\le \lambda, \qquad k\ge K. %k= 1,2,\ldots\ .
\]
In  view of Assumption (${\rm \bf W^*}$)(ii), there exists a
convergent subsequence $\{a_{k_m}\}_{m\ge 1}$ of the sequence
$\{a_k\}_{k\ge 1}$ such that $a_{k_m}\to a\in A(x)$ as
$m\to+\infty$. Due to lower semi-continuity of $\eta_u^1$ on ${\rm
Gr}(A)$, \[
\ilim\limits_{n\to+\infty}u^*(x_n)=\lim\limits_{k\to+\infty}u^*(x_{n_k})=
\lim\limits_{m\to+\infty}u^*(x_{n_{k_m}})=\lim\limits_{m\to+\infty}\eta_u^1(x_{n_{k_m}},a_{k_m})
\ge\eta_u^1(x,a)\ge u^*(x).
\]
Inequality (\ref{eq:lm11}) holds. Thus, $u^*$ is lower
semi-continuous on $\X$.

Now we consider the nonempty sets $A_*(x)$, $x\in\X$, defined in
(\ref{eq:lm000}). The graph ${\rm Gr}(A_*)$ is a Borel subset of
$\X\times \mathbb{A}$, because ${\rm Gr}(A_*)=\{(x,a)\,:
u^*(x)=\eta_{u}^1(x,a) \}$, and the functions $\eta_{u}^1$ and $u^*$
are lower semi-continuous on ${\rm Gr}(A)$ and $\X$ respectively,
and therefore they are Borel.

 We remark that, if $u^*=+\infty$, then
$A_*(x)=A(x)$. If $u^*(x)<\infty$, then Lemma~\ref{lm00101} implies
that the set $A_*(x)$ is compact. Indeed, fix any
$x\in\X_f:=\{x\in\X\,:\, u^*(x)<\infty\}$ and set $\lambda=u^*(x)$.
Then the set $ A_*(x)=\{a\in A(x)\,: \, \eta_{u}^1(x,a)\le\lambda
\}=\mathcal{D}_{\eta_{u}^1(x,\cdot)}(\lambda)$   is compact,
because $\eta_{u}^1(x,\cdot)$ is inf-compact on $A(x)$. %; see
%Lemma~\ref{lm0}(iii). %Thus, we finally obtain that $A_*(x)$ is a
%compact set, when $u^*(x)<+\infty$.

Let us prove the existence of $f\in\mathbb{F}$ satisfying
(\ref{eq:lm00}). Since the function $u^*$ is lower-semicontinuous,
it is Borel and the sets $X_\infty:=\{x\in\X\,:\, u^*(x)=+\infty\}$
and $\X_f$ are  Borel. Therefore, the graph of the mapping $\X_f\to
A_*$ is the Borel set ${\rm Gr}(A_*)\setminus (\X_\infty\times\A)$.
Since the nonempty  sets $A_*(x)$ are compact for all $x\in\X_f$,
the Arsenin-Kunugui Theorem implies the existence of a Borel
selector $f_1:\, \X_f\to \A$ such that $f_1(x)\in A_*(x)$ for all
$x\in\X.$ Consider any Borel mapping $f_2$ from $\X$ to $\A$
satisfying $f_2(x)\in A(x)$ for all $x\in\X$ and set
\[
f(x)=\begin{cases} f_1(x), &{\rm if\ } x\in\X_f,\\
f_2(x), &{\rm if\ } x\in\X_\infty.
\end{cases}
\]
Then $f\in\F$ and $f(x)\in A_*(x)$ for all $x\in\X.$
%
% By Rieder \cite[Corollary~4.3]{Rieder} there
%exists a measurable mapping $\phi:\X\to\mathbb{A}$ such that
%$\phi(x)\in A(x)$ $\forall x\in\X$ and,
%\[
%\inf\limits_{a\in A(x)}\eta_{u}^1(x,a)=\eta_{u}^1(x,\phi(x))\quad
%\forall x\in \X.
%\]
%In virtue of (\ref{eq:lm00}),
%\[
%u^*(x)=\eta_{u}^1(x,\phi(x))\quad \forall x\in \X.
%\]
%
%The lemma is proved.
\end{proof}

The following Lemma~\ref{lemma2} is formulated in Sch\"al
\cite[Lemma 2.3(ii)]{Schal} without proof. Reference Serfozo
\cite{Serfozo} mentioned in Sch\"al \cite[Lemma 2.3(ii)]{Schal}
contains relevant facts, but it does not contain this statement.
Therefore we provide the proof.
Recall that for a metric space $S$,  the family of all probability
measures on $(S, {\cal B}(S))$ is denoted by $\mathbb{P}(S)$.

\begin{lemma}\label{lemma2}
Let $S$ be an arbitrary metric space, $\{\mu_n\}_{n\ge 1} \subset
\mathbb{P}(S)$ converges  weakly to $\mu\in \mathbb{P}(S)$, and
$\{h_n\}_{n\ge 1}$ be a sequence of measurable nonnegative
$\overline{\mathbb{R}}$-valued functions on $S$. Then
\[
\int_S \underline{h}(s)\mu(ds)\le \ilim\limits_{n\to
+\infty}\int_S h_n(s)\mu_n(ds),
\]
where $\underline{h}(s)=\ilim\limits_{n\to+\infty,\, s'\to
s}h_n(s')$, $s\in S$.
\end{lemma}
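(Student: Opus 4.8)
The plan is to use a truncation argument reducing the statement to the classical lower-semicontinuity property of weak convergence for bounded continuous functions (the portmanteau theorem), combined with the monotone convergence theorem. First I would introduce, for each $m\ge 1$, the functions $g_m(s):=\min\{m,\inf_{s'\in B(s,1/m)}h_m(s')\}$ — or more precisely the "running infimum" truncations that converge upward to $\underline h$. The key observation is that $\underline h(s)=\ilim_{n\to\infty,\,s'\to s}h_n(s')=\sup_{m\ge 1}\inf_{n\ge m}\inf_{\rho_S(s',s)<1/m}h_n(s')$, so if I set $\underline h_m(s):=\inf_{n\ge m}\inf_{s'\,:\,\rho_S(s',s)<1/m}h_n(s')$ then $\underline h_m\uparrow\underline h$ pointwise as $m\to\infty$. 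It is routine to check that each $\underline h_m$ is lower semi-continuous on $S$: if $\rho_S(t,s)<1/m$ then $B(t,1/m-\rho_S(t,s))\subseteq B(s,1/m)$, which gives the required local lower bound. Truncating further, $\underline h_m^{(M)}:=\min\{M,\underline h_m\}$ is a bounded lower semi-continuous nonnegative function.

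Next I would apply the portmanteau theorem: for a bounded nonnegative lower semi-continuous function $\psi$ on a metric space $S$ and weakly convergent $\mu_n\to\mu$, one has $\int_S\psi\,d\mu\le\ilim_{n\to\infty}\int_S\psi\,d\mu_n$ (this is the standard consequence of weak convergence, valid on arbitrary metric spaces — $\psi$ is a monotone increasing limit of bounded Lipschitz functions $\psi_k\uparrow\psi$, each $\int\psi_k\,d\mu=\lim_n\int\psi_k\,d\mu_n\le\ilim_n\int\psi\,d\mu_n$, then let $k\to\infty$ by monotone convergence). Applying this with $\psi=\underline h_m^{(M)}$ gives
\[
\int_S \underline h_m^{(M)}(s)\,\mu(ds)\le \ilim_{n\to\infty}\int_S \underline h_m^{(M)}(s)\,\mu_n(ds).
\]
The crucial step is to bound the right-hand side by $\ilim_n\int_S h_n\,d\mu_n$. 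For $n\ge m$ and every $s$ we have $\underline h_m^{(M)}(s)\le\underline h_m(s)\le\inf_{s'\,:\,\rho_S(s',s)<1/m}h_n(s')\le h_n(s)$, since $s$ itself lies in the ball $B(s,1/m)$. Hence $\int_S\underline h_m^{(M)}\,d\mu_n\le\int_S h_n\,d\mu_n$ for all $n\ge m$, and therefore $\ilim_n\int_S\underline h_m^{(M)}\,d\mu_n\le\ilim_n\int_S h_n\,d\mu_n$.

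Finally I would combine the two displays to obtain $\int_S\underline h_m^{(M)}\,d\mu\le\ilim_n\int_S h_n\,d\mu_n$ for every $m,M\ge 1$, and then pass to the limit: let $M\to\infty$ (monotone convergence, since $\underline h_m^{(M)}\uparrow\underline h_m$) to get $\int_S\underline h_m\,d\mu\le\ilim_n\int_S h_n\,d\mu_n$, then let $m\to\infty$ (monotone convergence again, since $\underline h_m\uparrow\underline h$) to conclude $\int_S\underline h\,d\mu\le\ilim_n\int_S h_n\,d\mu_n$, which is the assertion. The main obstacle I anticipate is verifying cleanly that the truncations $\underline h_m$ are genuinely lower semi-continuous and that they increase to the stated $\underline h$ — this requires being careful about the interplay between the "$n\to\infty$" and "$s'\to s$" limits in the definition of $\underline h$, in particular justifying the representation as a supremum over $m$ of the combined infima; the measurability of the $h_n$ is used only to ensure all integrals make sense, while no continuity of the $h_n$ themselves is needed.
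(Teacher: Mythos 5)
Your overall architecture (minorize $h_n$, for all $n\ge m$, by a fixed bounded lower semi-continuous function, apply the portmanteau inequality to that minorant, then remove the truncations by monotone convergence) is sound, and would even be slightly more self-contained than the paper's proof, which instead proves the level-set inequality $\mu(\{s:\underline h(s)>t\})\le\ilim_{n}\mu_n(\{s:\underline h_n(s)>t\})$ and then invokes Serfozo's lemma on integrals with varying measures. However, there is a genuine error at exactly the step you flagged as delicate: the function $\underline h_m(s)=\inf_{n\ge m}\inf_{s'\in B(s,1/m)}h_n(s')$ is \emph{not} lower semi-continuous in general. For a \emph{fixed} radius $r$, the map $s\mapsto\inf_{s'\in B(s,r)}g(s')$ is upper semi-continuous (for $t$ near $s$, the ball $B(t,r)$ sticks out of $B(s,r)$ and may pick up small values of $g$); your inclusion $B(t,1/m-\rho_S(t,s))\subseteq B(s,1/m)$ controls the infimum over the wrong, shrunken ball, not over $B(t,1/m)$, which is what appears in $\underline h_m(t)$. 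Concretely, take $S=\R$, $h_n=\mathbf{1}_{(-\infty,1)}$ for all $n$, and $m=1$: then $\underline h_1=\mathbf{1}_{(-\infty,0]}$, which is upper but not lower semi-continuous at $0$. Since the portmanteau inequality $\int_S\psi\,d\mu\le\ilim_n\int_S\psi\,d\mu_n$ requires $\psi$ to be lower semi-continuous (for upper semi-continuous $\psi$ the inequality reverses), the first key display of your proof is unjustified as written.

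The repair is to use the lower semi-continuous regularization instead of the fixed-radius ball infimum: set $H_m(s)=\inf_{n\ge m}h_n(s)$ and $\underline h_m(s)=\ilim_{s'\to s}H_m(s')=\sup_{r>0}\inf_{s'\in B(s,r)}H_m(s')$ (note the supremum over the radius). This function is lower semi-continuous by construction, still satisfies $\underline h_m\le H_m\le h_n$ for all $n\ge m$ (so your comparison step survives), and $\underline h_m\uparrow\underline h$ because the infima over $n\ge m$ and over $s'\in B(s,r)$ commute, so that $\sup_m\sup_r\inf_{n\ge m}\inf_{s'\in B(s,r)}h_n(s')$ is precisely $\underline h(s)$. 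This is exactly the minorant used in the paper's proof; with that substitution the remainder of your argument goes through.
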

\begin{proof}  See Appendix {\bf A}. \end{proof}

We remark that $\ilim\limits_{n\to+\infty,\, s'\to s}h_n(s')$ is
the least upper bound of the set of all $\lambda\in \R$ such that
there exist $N=1,2,\ldots$ and a neighborhood $U(s)$ of $s$ such
that $\lambda\le \inf\{h_n(s'):\, n\ge N, s'\in U(s)\}.$

\section{Expected Total Discounted  Costs}\label{s4}

In this section, we establish under Assumption~(${\rm \bf W^*}$) the
standard properties of discounted MDPs: the existence of stationary
optimal policies, description of the sets of stationary optimal
policy, and convergence of value iterations.
Theorem~\ref{prop:dcoe} strengthens Feinberg and
Lewis~\cite[Proposition 3.1]{FL}, where these facts are proved under
Assumption~(${\rm \bf Wu}$).  In terms of applications to inventory
and queuing control, Assumption~(${\rm \bf W^*}$) does not require
that holding costs increase to infinity as the  inventory level (or
workload, or the number of customers in queue) increases to
infinity.

\begin{theorem}\label{prop:dcoe} Let Assumption~(${\rm \bf W^*}$) hold.
Then

{(i}) the functions $v_{n,\alpha}$, $n=1,2,\ldots$, and $v_\alpha$
are lower semi-continuous on $\X$, and $v_{n,\alpha}(x)\uparrow
v_\alpha (x)$ as $n \to +\infty$ for all $x\in \X;$

{(ii)} %equations~(\ref{eq3_3}) can be rewritten as
\begin{equation}\label{eq433}
v_{n+1,\alpha}(x)=\min\limits_{a\in A(x)}\left\{c(x,a)+\alpha
\int_\X v_{n,\alpha}(y)q(dy|x,a)\right\},\quad x\in
\mathbb{X},\,\,n=0,1,...,
\end{equation}
where $v_{0,\alpha}(x)=0$ for all $x\in \X$, and the nonempty sets
$A_{n,\alpha}(x):=\{a\in
A(x):\,v_{n+1,\alpha}(x)=\eta_{v_{n,\alpha}}^\alpha(x,a) \}$, $x\in
\X$, $n=0,1,\ldots,$ satisfy the following properties: (a) the graph
${\rm Gr}(A_{n,\alpha})=\{(x,a):\, x\in\X, a\in A_\alpha(x)\}$,
$n=0,1,\ldots,$ is a Borel subset of $\X\times \mathbb{A}$, and (b)
if $v_{n+1,\alpha}(x)=+\infty$, then $A_{n,\alpha}(x)=A(x)$ and, if
$v_{n+1,\alpha}(x)<+\infty$, then $A_{n,\alpha}(x)$ is compact;

{(iii)} for any $N=1,2,\ldots$, there exists a Markov optimal
$N$-horizon policy $(\phi_0,\ldots,\phi_{N-1})$ and if, for an
$N$-horizon Markov policy $(\phi_0,\ldots,\phi_{N-1})$ the
inclusions $\phi_{N-1-n}(x)\in A_{\alpha,n}(x)$, $x\in\X,$
$n=0,\ldots,N-1,$ hold then this policy is $N$-horizon optimal;

{(iv)} for $\alpha\in [0,1)$% equation (\ref{eq5}) can be rewritten as
\begin{equation}\label{eq5a}
v_{\alpha}(x)=\min\limits_{a\in A(x)}\left\{c(x,a)+\alpha\int_\X
v_{\alpha}(y)q(dy|x,a)\right\},\qquad x\in \X,
\end{equation}
and the nonempty sets $A_{\alpha}(x):=\{a\in
A(x):\,v_{\alpha}(x)=\eta_{v_{\alpha}}^\alpha(x,a) \}$, $x\in \X$,
satisfy the following properties: (a) the graph ${\rm
Gr}(A_{\alpha})=\{(x,a):\, x\in\X, a\in A_\alpha(x)\}$  is a Borel
subset of $\X\times \mathbb{A}$, and (b) if $v_{\alpha}(x)=+\infty$,
then $A_{\alpha}(x)=A(x)$ and, if $v_{\alpha}(x)<+\infty$, then
$A_{\alpha}(x)$ is compact.

{(v)} for an infinite-horizon there exists a stationary
discount-optimal policy $\phi_\alpha$, and a stationary policy is
optimal if and only if $\phi_\alpha(x)\in A_\alpha(x)$ for all
$x\in \X.$

{(vi)} {\rm (Feinberg and Lewis~\cite[Proposition 3.1(iv)]{FL})} under Assumption~(${\rm \bf Wu}$),  the functions
$v_{n,\alpha}$, $n=1,2,\ldots$, and $v_\alpha$ are inf-compact on
$\X$.
%
% \item{(i)} For infinite horizon
%expected total discounted costs there exists a stationary optimal
%policy $\phi_\alpha$.
%%\\
%\item{(iii}) The functions $v_{n,\alpha}$, $n=1,2,\ldots$, and
%$v_\alpha$ are lower semi-continuous on $\X$. \item{(iv)} Equation
%(\ref{eq5}) can be rewritten as
%\begin{equation}\label{eq5a}
%v_{\alpha}(x)=\min\limits_{a\in A(x)}\left\{c(x,a)+\alpha\int
%v_{\alpha}(y)q(dy|x,a)\right\},\qquad x\in \X,
%\end{equation}
%and the nonempty sets
%\[A_\alpha(x)=\left\{a\in A(x):\,v_{\alpha}(x)=c(x,a)+\alpha
%\int v_{\alpha}(y)q(dy|x,a)\right\},\qquad x\in \X,\] satisfy the
%following properties: (a) the graph ${\rm Gr}(A_\alpha)=\{(x,a):\,
%x\in\X, a\in A_\alpha(x)\}$ is a Borel subset of $\X\times
%\mathbb{A}$, and (b) if $v_\alpha(x)=+\infty$, then
%$A_\alpha(x)=A(x)$ and, if $v_\alpha(x)<+\infty$, then
%$A_\alpha(x)$ is compact. \item{(v)} There exists a measurable
%mapping $\phi_\alpha:\X\to\mathbb{A}$, $\phi_\alpha(x)\in
%A_\alpha(x)$ $\forall x\in \X$. \item{(vi)} A stationary policy
%$\phi_\alpha$ is optimal for discounted costs if and only if
%$\phi_\alpha(x)\in A_\alpha(x)$ for all $x\in \X.$
%
%Moreover, if additionally Assumption~(${\rm \bf Wu}$) holds, then
%the functions $v_{n,\alpha}$, $n=1,2,\ldots$, and $v_\alpha$ are
%inf-compact on $\X$.
\end{theorem}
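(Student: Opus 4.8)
The plan is to treat Lemma~\ref{lm1} as the basic one-step building block and to propagate it through the value iteration. Since for the discounted criterion the cost may be shifted by a constant without affecting the optimality equations or optimal policies, and since $c$ is bounded below by hypothesis, I would first assume without loss of generality that $c\ge 0$; then all $v_{n,\alpha}$ and, for $\alpha\in[0,1)$, $v_\alpha$ are nonnegative, which is precisely what is needed to invoke Lemma~\ref{lm1} (whose hypothesis on the running function is membership in $L(\X)$) and to exchange limits with integrals via monotone convergence. I would also use the identity $\eta_u^\alpha=\eta_{\alpha u}^1$ recorded before Lemma~\ref{lm00101}, so that Lemma~\ref{lm1}, stated for $\eta^1$, applies to $\eta^\alpha$ after replacing the running function $u$ by $\alpha u$.

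Parts (i)--(iii) I would prove by induction on $n$. The base case $v_{0,\alpha}\equiv 0\in L(\X)$ is immediate. For the step, suppose $v_{n,\alpha}\in L(\X)$; then the optimality equation~(\ref{eq3_3}) exhibits $v_{n+1,\alpha}$ as the function ``$u^*$'' of Lemma~\ref{lm1} associated with $u=\alpha v_{n,\alpha}$, so Lemma~\ref{lm1} gives at once that $v_{n+1,\alpha}\in L(\X)$ (hence lower semi-continuous), that the infimum in~(\ref{eq3_3}) is attained, and that the sets $A_{n,\alpha}(x)$ coincide with the sets $A_*(x)$ of Lemma~\ref{lm1}, so they have a Borel graph, equal $A(x)$ when $v_{n+1,\alpha}(x)=+\infty$, and are compact otherwise --- this is (ii). For (iii), equation~(\ref{eq4_4}) says that a Markov policy whose stage-$n$ rule selects from the relevant minimizing set is $N$-horizon optimal; reindexing matches the condition $\phi_{N-1-n}(x)\in A_{n,\alpha}(x)$ in the statement, and the existence of Borel selectors into each $A_{n,\alpha}$ is exactly the selector construction in the proof of Lemma~\ref{lm1} (split $\X$ into the Borel pieces where $v_{n+1,\alpha}$ is finite and where it is $+\infty$, apply the Arsenin--Kunugui Theorem on the former, where the minimizing sets are compact, and a fixed selector of $A$ on the latter), performed at each of the $N$ stages. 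Finally, $v_{n,\alpha}(x)\uparrow v_\alpha(x)$ for every $x$ is the standard convergence of value iteration for nonnegative costs, and $v_\alpha=\sup_n v_{n,\alpha}$ is lower semi-continuous as a pointwise supremum of lower semi-continuous functions; together with what was just shown this gives (i).

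For (iv) and (v) I would pass to the infinite horizon. If $v_\alpha\equiv+\infty$ then~(\ref{eq5}) reads $+\infty=+\infty$, $A_\alpha(x)=A(x)$ with Borel graph, and every stationary policy is discount-optimal, so all assertions hold trivially. Otherwise $v_\alpha\in L(\X)$ (nonnegative, lower semi-continuous by (i), nonempty domain), and applying Lemma~\ref{lm1} to $u=\alpha v_\alpha$ in combination with the DCOE~(\ref{eq5}) shows that the infimum there is a minimum and that $A_\alpha(x)=A_*(x)$ enjoys the stated graph and dichotomy properties, which is (iv); then~(\ref{eq6}) identifies the discount-optimal stationary policies as the selectors of $A_\alpha$, and the last conclusion of Lemma~\ref{lm1} furnishes one such $f\in\mathbb{F}$, which is (v). I expect this passage to be the only point requiring genuine care: if one does not wish to invoke~(\ref{eq5})--(\ref{eq6}), one must instead pass to the limit directly in~(\ref{eq433}) and justify the interchange $\lim_n\min_a=\min_a\lim_n$, which is legitimate because each $\eta^\alpha_{v_{n,\alpha}}(x,\cdot)$ is inf-compact and lower semi-continuous on $A(x)$ by Lemma~\ref{lm00101} and because Assumption~(${\rm \bf W^*}$)(ii), applied to the constant sequence $x_n\equiv x$, produces the limiting minimizer; everything else in the proof is bookkeeping on top of Lemma~\ref{lm1}.

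For (vi), note first that Assumption~(${\rm \bf Wu}$) implies Assumption~(${\rm \bf W^*}$) by Lemma~\ref{lm0}(ii), so (i)--(v) are available, and it remains only to strengthen lower semi-continuity to inf-compactness. Here $v_{1,\alpha}(x)=\min_{a\in A(x)}c(x,a)$, the minimum being attained because $c$ is inf-compact in $a$, and its level set $\{x:\,v_{1,\alpha}(x)\le\lambda\}$ is the image under the continuous projection $\X\times\A\to\X$ of the compact set $\mathcal{D}_c(\lambda)$, hence compact; thus $v_{1,\alpha}$ is inf-compact on $\X$. Since the iterates are nonnegative and nondecreasing in $n$ we have $v_{n,\alpha}\ge v_{1,\alpha}$ and $v_\alpha\ge v_{1,\alpha}$, and since $v_{n,\alpha}$ and $v_\alpha$ are lower semi-continuous by (i), their level sets are closed subsets of the corresponding compact level set of $v_{1,\alpha}$ and therefore compact; this gives the inf-compactness of $v_{n,\alpha}$ and $v_\alpha$, as in Feinberg and Lewis~\cite[Proposition~3.1(iv)]{FL}.
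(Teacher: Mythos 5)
Your proposal is correct and follows essentially the same route as the paper: reduce to $c\ge 0$ by a constant shift, propagate Lemma~\ref{lm1} through the value iteration (\ref{eq3_3}) to get (i)--(iii), apply it to the DCOE (\ref{eq5})--(\ref{eq6}) for (iv)--(v), and obtain lower semi-continuity of $v_\alpha$ from monotone convergence of the iterates. The only differences are cosmetic: you spell out the Feinberg--Lewis argument for (vi) (which the paper merely cites) and treat the degenerate case $v_\alpha\equiv+\infty$ explicitly, both of which are fine.
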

\begin{proof} %Let Assumption~(${\rm \bf W^*}$) holds. We
%prove this proposition in several steps.
%
%{\bf Step 1.} Let us consider statements
(i)--(v). First, we prove these statements for a nonnegative cost function $c$.
In this case, $v_{n,\alpha}(x)\ge 0$, $n=0,1,\ldots,$ and $v_\alpha(x)\ge 0$ for all $x\in\X.$

By (\ref{eq3_3}) and Lemma~\ref{lm1}, $v_{1,\alpha}\in L(\X)$, since
$v_{0,\alpha}=0\in L(\X).$ By the same arguments, if
$v_{n,\alpha}\in L(\X)$ then $v_{n+1,\alpha}\in L(\X)$.  Thus
$v_{n,\alpha}\in L(\X)$ for all $n=0,1,\ldots\ .$ By
Lemma~\ref{lm00101}, for any $n=1,2,\ldots$, $x\in\X$, and
$\lambda\in\mathbb{R},$ the set
$\mathcal{D}_{\eta_{v_{n,\alpha}}^\alpha(x,\cdot)}(\lambda)$ is a
compact subset of $\mathbb{A}$. By Bertsekas and
Shreve~\cite[Proposition~9.17]{Bert}, $v_{n,\alpha}\uparrow
v_{\alpha}$ as $n\to+\infty$.
Since %$v_{n+1,\alpha}(x)\ge v_{n,\alpha}(x)$ for all $x\in\X$, $n=0,1,\ldots$, and
the limit of a monotone increasing sequence of lower semi-continuous
functions is again a lower semi-continuous function, $v_\alpha\in
L(\X).$  Lemma~\ref{lm1}, applied to equations (\ref{eq3_3}) and
(\ref{eq5}),   implies statements (ii) and (iv)
 respectively.  Statement (iii) follows from (\ref{eq4_4}) and statement (v) follows from (\ref{eq6}).

Now let $c(x,a)\ge K$ for all $(x,a)\in {\rm Gr}(A)$ and for some
$K>-\infty.$  For $K\ge 0$, statements (i)--(v) are proved.  For
$K<0$, consider the value functions ${\tilde c}=c-K\ge 0$. If the
cost function $c$ substituted with $\tilde c$, we substitute the
notation $v$ with $\tilde v.$ Then $v_{n,\alpha}^\pi={\tilde
v}_{n,\alpha}^\pi + \frac{1-\alpha^n}{1-\alpha}K,$ $n=0,1,\ldots,$
for all policies $\pi.$ Thus, $v_{n,\alpha}={\tilde v}_{n,\alpha} +
\frac{1-\alpha^n}{1-\alpha}K,$ $n=0,1,\ldots,$ and
$v_{\alpha}={\tilde v}_{\alpha} + \frac{K}{1-\alpha}.$ Since
statements (i)--(v) hold for the shifted costs $\tilde c$ and the
value functions ${\tilde v}_{n,\alpha}$ and ${\tilde v}_\alpha$,
they also hold for the initial cost function $c$ and the value
functions $v_{n,\alpha}$ and $v_\alpha.$ \end{proof}

We remark that the conclusions of Theorem~\ref{prop:dcoe} and its
proof remain correct when $\alpha=1$ and the function $c$ is
nonnegative.

\section{Average Costs Per Unit Time}\label{s5} $ $

In this section we show that Assumption~(${\rm \bf W^*}$) and
boudness assumption Assumption (${\rm \bf \underline{B}}$) on the
function $u_\alpha$, which is weaker boundness Assumption~(${\rm \bf
B}$) introduced by Sch\"al~\cite{Schal}, lead to the validity of
stationary average-cost optimal inequalities and the existence of
stationary policies. Stronger results hold under Assumption~(${\rm
\bf B}$).

\textbf{Assumption (${\rm \bf \underline{B}}$).} (i)
Assumption~(${\rm \bf G}$)
 holds, and (ii) $\ilim\limits_{\alpha \uparrow
1}u_{\alpha}(x)<\infty$ for all $x\in \X$.

Assumption~(${\rm \bf \underline{B}}$)(ii) is weaker than the
assumption
%Assumption~(${\rm \bf G}$) together with the assumption that
$\sup_{\alpha\in [0,1)}u_{\alpha}(x)<\infty$ for all $x\in \X$
considered in Sch\"al~\cite{Schal}.  This assumption and
Assumption~(${\rm \bf G}$) were combined in Feinberg and
Lewis~\cite{FL} into the following assumption.

\textbf{Assumption (${\rm \bf B}$).} (i) Assumption~(${\rm \bf
G}$) holds, and (ii) $\sup_{\alpha\in [0,1)}u_{\alpha}(x)<\infty$
for all $x\in \X$.

It seems natural to consider the assumption $\slim\limits_{\alpha
\uparrow 1}u_{\alpha}(x)<\infty$ for all $x\in \X$, which is
stronger than Assumption~(${\rm \bf \underline{B}}$)(ii) and
weaker than Assumption (${\rm \bf B}$)(ii).  However, as the
following lemma shows, under Assumption~(${\rm \bf G}$) this
assumption is equivalent to Assumption (${\rm \bf B}$)(ii).

\begin{lemma}\label{l5.1EF}
Let the cost function $c$ be bounded below and Assumption~(${\rm
\bf G}$) hold.  Then for each $x\in\X$ the following two
inequalities are equivalent:

(i) $\sup_{\alpha\in [0,1)}u_{\alpha}(x)<\infty$,

(ii) $\slim\limits_{\alpha \uparrow 1}u_{\alpha}(x)<\infty$.
\end{lemma}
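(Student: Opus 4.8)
The plan is to prove the nontrivial implication (ii)$\Rightarrow$(i); the converse is immediate since $\slim_{\alpha\uparrow1}u_\alpha(x)\le\sup_{\alpha\in[0,1)}u_\alpha(x)$. First I would reduce to the case of a nonnegative cost function. If $c\ge K$ with $K>-\infty$ and $\tilde c:=c-K\ge 0$, then, exactly as in the proof of Theorem~\ref{prop:dcoe}, the corresponding value functions satisfy $\tilde v_\alpha=v_\alpha-K/(1-\alpha)$, hence $\tilde m_\alpha=m_\alpha-K/(1-\alpha)$, and therefore $\tilde u_\alpha=\tilde v_\alpha-\tilde m_\alpha=u_\alpha$. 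So passing from $c$ to $\tilde c$ leaves both inequalities (i) and (ii) unchanged, and I may assume $c\ge 0$.

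Under $c\ge 0$, for every policy $\pi$ the quantity $v_{N,\alpha}^\pi(x)=\E_x^\pi\sum_{n=0}^{N-1}\alpha^nc(\xi_n,u_n)$ is nondecreasing in $\alpha\in[0,1)$; letting $N\to\infty$ and then taking the infimum over $\pi$ shows that $v_\alpha(x)$ is nondecreasing in $\alpha$ for every $x$, and consequently $m_\alpha=\inf_{x\in\X}v_\alpha(x)$ is nondecreasing in $\alpha$ as well. Moreover $m_\alpha\ge 0$, so $u_\alpha(x)=v_\alpha(x)-m_\alpha\le v_\alpha(x)$ for all $\alpha$. Finally, since Assumption~(${\rm \bf G}$) holds, $(\ref{eq:schal})$ gives $\slim_{\alpha\uparrow1}(1-\alpha)m_\alpha=\overline{w}\le w^*<+\infty$, so there is $\delta_1>0$ with $m_\alpha\le(w^*+1)/(1-\alpha)<\infty$ for all $\alpha\in(1-\delta_1,1)$; together with the monotonicity of $m_\alpha$ this yields $m_\alpha<\infty$ for every $\alpha\in[0,1)$.

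Now fix $x$ and suppose $L:=\slim_{\alpha\uparrow1}u_\alpha(x)<\infty$; then there is $\delta_0>0$ with $u_\alpha(x)\le L+1$ for all $\alpha\in(1-\delta_0,1)$. Pick any $\beta\in(1-\min\{\delta_0,\delta_1\},1)$; then $v_\beta(x)=u_\beta(x)+m_\beta\le(L+1)+m_\beta<\infty$. For $\alpha\in[0,\beta]$ the monotonicity of $v_\alpha$ gives $u_\alpha(x)\le v_\alpha(x)\le v_\beta(x)$, and for $\alpha\in(\beta,1)\subseteq(1-\delta_0,1)$ we have $u_\alpha(x)\le L+1$; hence $\sup_{\alpha\in[0,1)}u_\alpha(x)\le\max\{v_\beta(x),\,L+1\}<\infty$, which is (i). I do not anticipate a genuine obstacle; the points needing care are the shift-invariance of $u_\alpha$ and, above all, the monotonicity of $v_\alpha$ (and hence of $m_\alpha$) in $\alpha$ for nonnegative $c$ — this is precisely what lets a single discount factor $\beta$ close to $1$ dominate the whole range $[0,\beta]$ bounded away from $1$, where (ii) provides no information.
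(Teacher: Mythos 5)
Your proof is correct and takes essentially the same route as the paper's: reduce to $c\ge 0$ using the shift-invariance of $u_\alpha$, then exploit the monotonicity of $v_\alpha$ (hence of $m_\alpha$) in $\alpha$, the bound $u_\alpha\le v_\alpha$, and the finiteness of $m_\alpha$ near $1$ coming from Assumption~(${\rm \bf G}$) and (\ref{eq:schal}). The only difference is presentational: you argue directly by dominating $u_\alpha(x)$ on $[0,\beta]$ by $v_\beta(x)<\infty$ for a single $\beta$ close to $1$, whereas the paper packages the same ingredients as a proof by contradiction via $\slim_{\alpha\uparrow 1}(1-\alpha)v_\alpha(x)$.
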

\begin{proof} Obviously, (i)$\to$(ii).  Let us prove (ii)$\to$(i). Let (ii) hold.  Assume that (i) does not hold.
Since  $\sup_{\alpha\in [0,1)}u_{\alpha}(x)= \max\{\sup_{\alpha\in
[0,\alpha^*)}u_{\alpha}(x), \sup_{\alpha\in
[\alpha^*,1)}u_{\alpha}(x)\}$ for any $\alpha^*\in [0,1)$, there
exists $\alpha^*\in [0,1)$ such that $\sup_{\alpha\in
[0,\alpha^*)}u_{\alpha}(x)=\infty.$

 Since the function $u_\alpha$
remains unchanged, if a finite constant is added to the cost
function $c$, we assume without loss of generality that $c(x,a)\ge
0$ for all $(x,a)\in {\rm Gr}(A).$  Since $c\ge 0$, the functions
$v_\alpha(x)$ and $m_\alpha$ are nonnegative nondecreasing
functions in $\alpha\in [0,1).$  Since
$v_\alpha(x)=u_\alpha(x)+m_\alpha\ge u_\alpha(x)$, we have
$\sup_{\alpha\in [0,\alpha^*)}v_{\alpha}(x)=\infty$ and therefore
$v_\alpha(x)=\infty$ for all $\alpha\in [\alpha^*,1),$ because of
the monotonicity of $v_\alpha$ in $\alpha$.  Thus,
$\slim\limits_{\alpha \uparrow 1}(1-\alpha)v_{\alpha}(x)=\infty.$
However, $\slim\limits_{\alpha \uparrow
1}(1-\alpha)v_{\alpha}(x)=\slim\limits_{\alpha \uparrow
1}(1-\alpha)(u_{\alpha}(x)+m_\alpha)\le \slim\limits_{\alpha
\uparrow 1}(1-\alpha)u_{\alpha}(x)+ \overline{w}<\infty,$ where
the last inequality follows from (ii) and (\ref{eq:schal}).
 The obtained contradiction completes the proof.
\end{proof}

Until the end of this section we assume that Assumption~(${\rm \bf
\underline{B}}$) holds. Let us set
\begin{equation}\label{eq131}
u(x):=\ilim\limits_{\small \alpha\uparrow 1, \ y\to
x}u_{\alpha}(y),\quad x\in\X,
\end{equation}
where $\ilim\limits_{\small \alpha\uparrow 1, \ y\to
x}u_{\alpha}(y)$ is the least upper bound of the set of all
$\lambda\in \R_+$ such that there exist $\beta\in [0,1)$ and a
neighborhood $U(x)$ of $x$ such that $\lambda\le
\inf\{u_\alpha(y):\, \alpha\in[\beta,1), y\in U(x)\cap\X\}.$

Also define the following nonnegative functions on $\X$:
\begin{equation}\label{eq:defuuU}
U_\beta(x)  =  \inf\limits_{\small \alpha\in \left[\beta,1
\right)}u_{\alpha}(x), \quad \underline{u}_\beta(x)  =
\ilim\limits_{y\to x}U_\beta(y),\ \quad\quad \beta\in [0,1),\,\,
x\in \X.
\end{equation}
 Observe that all the three defined functions take finite
values at $x\in \X.$ Indeed,
\begin{equation}\label{eqnew9}
\underline{u}_\beta(x)\le U_\beta(x)\le \sup_{\beta\in [0,1)
}\inf\limits_{\small \alpha\in [\beta,1 )}
u_{\alpha}(x)=\ilim\limits_{\alpha\uparrow
1}u_{\alpha}(x)<\infty,\quad \beta\in [0,1), \ x\in\X,
\end{equation}
where the first two inequalities follow from the definitions of
$\underline{u}_\beta$ and $U_\beta$ respectively, and the last
inequality follows from Assumption (${\rm \bf \underline{B}}$).
For $x\in \X$
\begin{equation}\begin{split}\label{eq:ubounded}
u({x})=\sup\limits_{\small \beta\in [0,1),\ R>0}\left[
\inf\limits_{\small \alpha\in \left[\beta,1 \right), \ y\in
B_{R}(x)}u_{\alpha}(y)\right]=\sup\limits_{\beta\in [0,1)}\
\sup\limits_{R>0}\ \inf\limits_{y\in B_R(x)} \
\inf\limits_{\alpha\in \left[\beta,1 \right)}u_{\alpha}(y)\\ =
\sup\limits_{\beta\in [0,1)}\ \sup\limits_{R>0}\ \inf\limits_{y\in
B_R(x)} \ U_{\beta}(y) =\sup\limits_{\beta\in [0,1)}\
\ilim\limits_{y\to x}\ U_{\beta}(y)=\sup\limits_{\beta\in
[0,1)}\underline{u}_{\beta}(x)<\infty, \end{split}
\end{equation}
where $B_R(x)=\{y\in \X \,: \,    \rho(y,x)<R\}$, the first
equality is (\ref{eq131}), the second equality follows from the
properties of infinums, the third and the fifth equalities follow
from (\ref{eq:defuuU}), the fourth equality follows from the
definition of $\limsup$,  and the inequality follows from
(\ref{eqnew9}).  In view of (\ref{eq:defuuU}), the functions
$U_\beta(x)$ and $\underline{u}_\beta(x)$ are nondecreasing in
$\beta$.  Therefore, in view of (\ref{eq:ubounded}),
\begin{equation}\label{eq5.5}
u(x)=\lim\limits_{\beta\uparrow
1}\underline{u}_\beta(x),\qquad\qquad x\in\X.
\end{equation}
 We
also set for $u$ from (\ref{eq5.5})
\begin{equation}\label{defsetA*}
 A^*(x):=\left\{a\in A(x)\, : \,
\overline{w}+u(x)\ge c(x,a)+\int_\X  u(y)q(dy|x,a) \right\}, \
x\in\X,
\end{equation}
and let $A_*(x)$, $x\in\X$, be the sets defined in (\ref{eq:lm000})
for this function $u$; $A_*(x)\subseteq A^*(x).$
\begin{theorem}\label{teor1}
Suppose Assumptions~(${\rm \bf W^*}$) and (${\rm \bf
\underline{B}}$) hold. There exist a stationary policy $\phi$
satisfying (\ref{eq7111}) with $u$ defined in (\ref{eq131}). Thus,
equalities (\ref{eq:7121}) hold for this policy $\phi.$
%$w^{\phi}(x)=w(x)=\overline{w}.$
%\[
%w(x)=w^{\phi}(x)=\slim\limits_{\alpha\uparrow
%1}(1-\alpha)v_{\alpha}(x)=\overline{w}=w^*,\qquad\qquad x\in \X.
%\]
Furthermore, the following statements hold:

\item{\rm{({\bf a})}} the function $u:\X\to \R_+$, defined in
(\ref{eq131}), is lower semi-continuous;

\item{\rm{({\bf b})}} the nonempty sets $A^*(x)$, $x\in\X$,
satisfy the following properties:

${\rm\bf(b_1)}$ the graph ${\rm Gr}(A^*)=\{(x,a):\, x\in\X, a\in
A^*(x)\}$ is a Borel subset of $\X\times \mathbb{A}$;

${\rm\bf(b_2)}$ for each $x\in\X$ the set $A^*(x)$ is compact;
\item{\rm{({\bf c})}} a stationary policy $\phi$ is optimal for
average costs and satisfies (\ref{eq7111}) with $u$ defined in
(\ref{eq131}), if $\phi(x)\in A^*(x)$ for all $x\in \X$;
 \item{\rm{({\bf d})}}  there exists a stationary policy $\phi$ with
 $\phi(x)\in A_*(x)\subseteq
A^*(x)$ for all $x\in\X$;
 \item{\rm{({\bf e})}}
if, in addition, Assumption~(${\rm \bf Wu}$) holds, then the
function $u$, defined in (\ref{eq131}), is inf-compact. % on $\X$.
\end{theorem}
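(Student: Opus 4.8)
The strategy is to verify each of the claimed properties for the function $u$ defined in (\ref{eq131}) and the sets $A^*(x)$, essentially reducing everything to the machinery already developed in Sections~3 and~4. The backbone is to apply the vanishing-discount method to the discounted optimality equation (\ref{eq5a}) of Theorem~\ref{prop:dcoe}(iv) and pass to the limit along a suitable sequence $\alpha\uparrow 1$, using Lemma~\ref{lemma2} to handle the integral term.

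\textbf{Step 1: lower semi-continuity of $u$ (statement (a)).}
I would start from the representation (\ref{eq5.5}), $u(x)=\lim_{\beta\uparrow 1}\underline u_\beta(x)$, where each $\underline u_\beta(x)=\ilim_{y\to x}U_\beta(y)$ is lower semi-continuous by construction (a $\liminf$ over $y\to x$ of any function is automatically l.s.c.). Since the functions $\underline u_\beta$ are nondecreasing in $\beta$ by (\ref{eq:defuuU}), their pointwise supremum (equivalently, limit) over $\beta\uparrow 1$ is again lower semi-continuous. Hence $u\in L(\X)$ with finite values by (\ref{eq:ubounded}). This is the easy part.

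\textbf{Step 2: the key optimality inequality.}
The heart of the argument is to show there is a stationary $\phi$ satisfying (\ref{eq7111}) with this $u$, i.e. $\overline w + u(x)\ge c(x,\phi(x))+\int_\X u(y)q(dy|x,\phi(x))$ for all $x$. I would fix $x\in\X$, pick sequences $\alpha_k\uparrow 1$ and $y_k\to x$ realizing the inner infimum defining $u(x)$ in (\ref{eq131}), so that $u_{\alpha_k}(y_k)\to u(x)$ and along this sequence $(1-\alpha_k)v_{\alpha_k}(y_k)\to\overline w$ can be arranged (using (\ref{eq:schal}) and passing to a further subsequence). From (\ref{eq5a}), pick $a_k\in A_{\alpha_k}(y_k)$, so that $v_{\alpha_k}(y_k)=c(y_k,a_k)+\alpha_k\int_\X v_{\alpha_k}q(dy|y_k,a_k)$; subtracting $m_{\alpha_k}$ and rearranging gives $(1-\alpha_k)m_{\alpha_k} + u_{\alpha_k}(y_k)\ge c(y_k,a_k)+\alpha_k\int_\X u_{\alpha_k}(z)q(dz|y_k,a_k)$ (using $\alpha_k u_{\alpha_k}\ge 0$ and $v_{\alpha_k}=u_{\alpha_k}+m_{\alpha_k}$). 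Since $\{c(y_k,a_k)\}$ is bounded above (the left side converges), Assumption~(${\rm \bf W^*}$)(ii) gives a limit point $a\in A(x)$ of $\{a_k\}$; passing to that subsequence and using lower semi-continuity of $c$ together with Lemma~\ref{lemma2} applied to $h_k(z)=u_{\alpha_k}(z)$ (whose $\liminf$-relaxation $\underline h$ is exactly $u$ by (\ref{eq131})) and $\mu_k=q(\cdot|y_k,a_k)$, $\mu=q(\cdot|x,a)$ (weak convergence by (${\rm \bf W}$)(iii)), I obtain $\overline w + u(x)\ge c(x,a)+\int_\X u(z)q(dz|x,a)$, i.e. $A^*(x)\ne\emptyset$. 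Then Theorem~\ref{Prop1} delivers average-cost optimality and (\ref{eq:7121}).

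\textbf{Step 3: properties of $A^*(x)$ and selection (statements (b), (c), (d)).}
Nonemptiness of $A^*(x)$ is Step~2. Since $u\in L(\X)$, Lemma~\ref{lm00101}(b) (with $\alpha=1$) shows $\eta_u^1(x,\cdot)=c(x,\cdot)+\int_\X u\,q(dy|x,\cdot)$ is inf-compact on $A(x)$, and $A^*(x)=\mathcal D_{\eta_u^1(x,\cdot)}(\overline w + u(x))$ with $\overline w + u(x)<\infty$, so $A^*(x)$ is compact — this is $(\mathbf{b_2})$. For $(\mathbf{b_1})$: $\mathrm{Gr}(A^*)=\{(x,a)\in\mathrm{Gr}(A):\eta_u^1(x,a)-u(x)\le\overline w\}$; the function $\eta_u^1$ is l.s.c. on $\mathrm{Gr}(A)$ (sum of l.s.c.\ $c$ and the l.s.c.\ integral term, as in the proof of Lemma~\ref{lm00101}(b)) and $u$ is l.s.c., hence Borel, so $\mathrm{Gr}(A^*)$ is Borel. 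Statement (c) is then immediate: any $\phi$ with $\phi(x)\in A^*(x)$ satisfies (\ref{eq7111}), so Theorem~\ref{Prop1} applies. For (d): the sets $A_*(x)$ from (\ref{eq:lm000}) are nonempty with Borel graph by Lemma~\ref{lm1}, and the inclusion $A_*(x)\subseteq A^*(x)$ holds because for $a\in A_*(x)$ we have $\eta_u^1(x,a)=u^*(x)\le c(x,\phi'(x))+\int u\,q\le \overline w + u(x)$ for the $\phi'$ from Step~2 (equivalently $u^*(x)\le\overline w+u(x)$, which follows from Step~2); then Lemma~\ref{lm1} itself produces the required selector $\phi\in\F$.

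\textbf{Step 4: inf-compactness of $u$ under (${\rm \bf Wu}$) (statement (e)).}
Under Assumption~(${\rm \bf Wu}$), Theorem~\ref{prop:dcoe}(vi) gives that each $v_\alpha$ is inf-compact on $\X$, hence so is $u_\alpha=v_\alpha-m_\alpha$. I would show the level sets $\mathcal D_u(\lambda)$ are compact: from (\ref{eq:ubounded}), $u(x)\le\lambda$ forces, for every $\beta$, $\underline u_\beta(x)\le\lambda$, and by a diagonal/limit-point argument along $\alpha\uparrow 1$ and $y\to x$ one can trap a sequence in a fixed level set $\mathcal D_{u_{\alpha}}(\lambda+\varepsilon)$, which is compact; closedness of $\mathcal D_u(\lambda)$ comes from l.s.c.\ (Step~1), and total boundedness from the uniform level-set bound.

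\textbf{Main obstacle.}
The delicate point is Step~2 — ensuring that the chosen sequences $\alpha_k\uparrow 1$ and $y_k\to x$ simultaneously realize the $\liminf$ defining $u(x)$, keep $(1-\alpha_k)v_{\alpha_k}(y_k)$ under control so its $\limsup$ does not exceed $\overline w$, and make the relaxed lower limit $\underline h$ in Lemma~\ref{lemma2} coincide with $u$ rather than something smaller. The definitions (\ref{eq131}) and (\ref{eq:ubounded}) are engineered precisely so that $\underline h = u$, but matching this with the three-parameter limit passage ($n\to\infty$ in Lemma~\ref{lemma2} versus $\alpha\uparrow 1$ and the spatial $y\to x$) requires care in choosing the order of subsequence extractions. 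The rest is routine application of the lemmas of Sections~3–4.
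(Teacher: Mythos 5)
Your argument for the main claim and for statements (a)--(d) is sound, but it reaches the key inequality $\overline{w}+u(x)\ge \min_{a\in A(x)}\eta_u^1(x,a)$ by a different route than the paper. The paper isolates this as Lemma~\ref{lemma3} and works at the \emph{fixed} point $x$: it first derives from the discounted optimality equation (\ref{eq12}) an inequality for the monotone regularizations $\underline{u}_\alpha$ (inequality (\ref{eqalmweu})), and only then extracts a convergent sequence of minimizers and applies Lemma~\ref{lemma2} together with Corollary~\ref{cor1}, whose whole purpose is to guarantee that the relaxed lower limit of $\underline{u}_{\alpha_n}$ equals $u$ for \emph{every} sequence $\alpha_n\uparrow 1$. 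You instead work at a moving base point $y_k\to x$ realizing the $\liminf$ in (\ref{eq131}) and apply Lemma~\ref{lemma2} directly to the raw functions $u_{\alpha_k}$; this is closer to the paper's proof of Theorem~\ref{teorapp} than of Theorem~\ref{teor1}. Your version does work, with one correction: for a particular subsequence $\{\alpha_k\}$ the relaxed limit $\underline{h}(z)=\ilim_{k\to\infty,\,z'\to z}u_{\alpha_k}(z')$ is in general only $\ge u(z)$, not equal to it (a $\liminf$ over a sub-collection of discount factors can only increase); fortunately $\ge$ is exactly the direction Lemma~\ref{lemma2} needs, so your "main obstacle" resolves itself. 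Statements (a)--(d) are handled essentially as in the paper (Lemma~\ref{lemma1}, Lemma~\ref{lm00101}(b), Lemma~\ref{lm1}, the Arsenin--Kunugui theorem, and Theorem~\ref{Prop1}).

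Statement (e) is where you have a genuine gap. You claim that $u(x_n)\le\lambda$ lets you "trap a sequence in a fixed level set $\mathcal{D}_{u_\alpha}(\lambda+\varepsilon)$", but unwinding the definitions only yields points $y_n$ near $x_n$ and discount factors $\alpha_n$ \emph{depending on $n$} with $u_{\alpha_n}(y_n)\le\lambda+\varepsilon$; the union $\bigcup_n\mathcal{D}_{u_{\alpha_n}}(\lambda+\varepsilon)$ of level sets of different inf-compact functions need not be relatively compact, and there is no "uniform level-set bound" available for free. The paper closes exactly this hole in Lemma~\ref{lemma1}: using (\ref{eq:estlambda*1}) and (\ref{eq12}) it shows $u_\alpha(\cdot)\ge v_1(\cdot)-\lambda^*$ uniformly in $\alpha\in[0,1)$, hence $\underline{u}_0(\cdot)\ge v_1(\cdot)-\lambda^*$, so that $\mathcal{D}_u(\lambda)\subseteq\mathcal{D}_{\underline{u}_0}(\lambda)\subseteq\mathcal{D}_{v_1}(\lambda+\lambda^*)$, the last set being compact by the inf-compactness of the one-stage value function under Assumption~(${\rm\bf Wu}$). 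You need this (or an equivalent uniform minorant) to finish Step~4.
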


Before the proof of Theorem~\ref{teor1}, we establish some
auxiliary facts.

\begin{lemma}\label{lemma1}
Under Assumption~(${\rm \bf \underline{B}}$),  the functions $u,
\underline{u}_\alpha:\X\to \mathbb{R}_+,$ $\alpha\in [0,1),$ are
lower semi-continuous on $\X$. If additionally Assumption~(${\rm
\bf W^*}$) holds,  the functions $u_\alpha:\X\to \mathbb{R}_+,$
$\alpha\in [0,1),$ are lower semi-continuous on $\X$. Under
Assumptions~(${\rm \bf Wu}$)   and (${\rm \bf \underline{B}}$),
 the functions $u,u_\alpha, \underline{u}_\alpha:\X\to
\mathbb{R}_+,$ $\alpha\in [0,1),$ are inf-compact on $\X$.
\end{lemma}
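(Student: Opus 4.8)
The plan is to prove the three claims in order, building up from the simplest objects. First I would handle the lower semi-continuity of $U_\beta$ is \emph{not} what is asserted, so instead I start from the function $\underline{u}_\beta(x) = \ilim_{y\to x} U_\beta(y)$ defined in (\ref{eq:defuuU}). By the very definition of the lower limit, $\underline{u}_\beta$ is lower semi-continuous on $\X$ for every $\beta\in[0,1)$: this is the standard fact that $x\mapsto \ilim_{y\to x} g(y)$ is l.s.c.\ for any function $g$, which one checks directly from the level-set characterization. Then, since by (\ref{eq5.5}) we have $u(x) = \lim_{\beta\uparrow 1}\underline{u}_\beta(x) = \sup_{\beta\in[0,1)}\underline{u}_\beta(x)$ (the functions $\underline{u}_\beta$ being nondecreasing in $\beta$ by (\ref{eq:defuuU})), $u$ is a supremum of lower semi-continuous functions and hence lower semi-continuous on $\X$. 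This settles the first sentence of the lemma.

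Next I would add Assumption~(${\rm \bf W^*}$) and prove that each $u_\alpha = v_\alpha - m_\alpha$ is lower semi-continuous. This is immediate from Theorem~\ref{prop:dcoe}(i), which (under Assumption~(${\rm \bf W^*}$)) asserts that $v_\alpha$ is lower semi-continuous on $\X$ for every $\alpha\in[0,1)$; subtracting the finite constant $m_\alpha$ preserves lower semi-continuity. For the inf-compactness claims under Assumptions~(${\rm \bf Wu}$) and (${\rm \bf \underline{B}}$): by Theorem~\ref{prop:dcoe}(vi), under Assumption~(${\rm \bf Wu}$) each $v_\alpha$ is inf-compact on $\X$, hence so is $u_\alpha = v_\alpha - m_\alpha$ (again shifting by a constant preserves inf-compactness). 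It remains to transfer inf-compactness to $\underline{u}_\beta$ and to $u$. Since $\underline{u}_\beta(x) = \ilim_{y\to x} U_\beta(y)$ and $U_\beta(x) = \inf_{\alpha\in[\beta,1)} u_\alpha(x)$, I would first check that $U_\beta$ is inf-compact: its level set $\mathcal{D}_{U_\beta}(\lambda) = \{x: \inf_{\alpha\in[\beta,1)} u_\alpha(x)\le\lambda\} \subseteq \bigcap_{\varepsilon>0}\bigcup_{\alpha\in[\beta,1)} \mathcal{D}_{u_\alpha}(\lambda+\varepsilon)$, and one argues this is contained in a compact set using that a decreasing limit along $\alpha$ of points in the nested inf-compact level sets $\mathcal{D}_{u_\alpha}(\lambda+\varepsilon)$ has a limit point there — the key being uniformity of the compact enclosure. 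Then $\underline{u}_\beta$, being the lower envelope $\ilim_{y\to x}$ of an inf-compact function, has level sets $\mathcal{D}_{\underline{u}_\beta}(\lambda)$ that are closed subsets of $\mathcal{D}_{U_\beta}(\lambda+\varepsilon)$ for each $\varepsilon>0$, hence relatively compact; so $\underline{u}_\beta$ is inf-compact. Finally, $u = \sup_\beta \underline{u}_\beta$ has $\mathcal{D}_u(\lambda) = \bigcap_\beta \mathcal{D}_{\underline{u}_\beta}(\lambda)$, a closed subset of the compact set $\mathcal{D}_{\underline{u}_0}(\lambda)$, hence compact; so $u$ is inf-compact.

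The main obstacle I anticipate is the inf-compactness transfer from the family $\{u_\alpha\}$ to $U_\beta$ and thence to $\underline{u}_\beta$, because inf-compactness — unlike lower semi-continuity — does not pass freely through infima or through the $\ilim_{y\to x}$ operation without a uniform control on where the compact level sets live. The clean way around this, which I would use, is to note that Assumption~(${\rm \bf \underline{B}}$)(ii) guarantees $u(x) = \ilim_{\alpha\uparrow1} u_\alpha(x) < \infty$ pointwise (by (\ref{eqnew9})), and to exploit the nested structure: for fixed $\lambda$, the sets $\mathcal{D}_{u_\alpha}(\lambda)$ need not be uniformly contained in one compact set, so instead one works level set by level set for $U_\beta$ directly, showing $\mathcal{D}_{U_\beta}(\lambda)$ is closed (from $U_\beta$ being an $\ilim$-type object only after passing to $\underline{u}_\beta$ — note $U_\beta$ itself need only be Borel, not l.s.c.) and contained in $\bigcup_{\alpha\in[\beta,1)}\mathcal{D}_{u_\alpha}(\lambda+1)$, then extracting a convergent subsequence $\alpha_k\uparrow1$, $y_k\in\mathcal{D}_{u_{\alpha_k}}(\lambda+1)$ with $y_k\to y$ and invoking lower semi-continuity in a diagonal fashion together with Theorem~\ref{prop:dcoe}(vi). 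A careful bookkeeping of which level ($\lambda$ versus $\lambda+\varepsilon$) is needed at each stage is the one genuinely delicate point; everything else is routine.
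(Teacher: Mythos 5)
Your handling of the lower semi-continuity claims is correct and essentially the paper's own argument: $\underline{u}_\beta$ is the lower semi-continuous envelope of $U_\beta$ and hence l.s.c., $u=\sup_{\beta}\underline{u}_\beta$ is l.s.c.\ as a supremum of l.s.c.\ functions, and $u_\alpha=v_\alpha-m_\alpha$ inherits lower semi-continuity (resp.\ inf-compactness under Assumption~(${\rm\bf Wu}$)) from $v_\alpha$ via Theorem~\ref{prop:dcoe}(i) (resp.\ (vi)). Your final reduction is also the right one: since $\underline{u}_\beta$ is nondecreasing in $\beta$ and $u\ge\underline{u}_\beta\ge\underline{u}_0$, all the level sets $\mathcal{D}_u(\lambda)$ and $\mathcal{D}_{\underline{u}_\beta}(\lambda)$ are closed subsets of $\mathcal{D}_{\underline{u}_0}(\lambda)$, so everything hinges on the compactness of that one set.

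At exactly that point, however, your argument has a genuine gap. You propose to show $\mathcal{D}_{U_\beta}(\lambda)$ is relatively compact by covering it with $\bigcup_{\alpha\in[\beta,1)}\mathcal{D}_{u_\alpha}(\lambda+1)$ and then ``extracting a convergent subsequence'' $y_k\in\mathcal{D}_{u_{\alpha_k}}(\lambda+1)$; but the existence of a convergent subsequence is precisely what requires the $y_k$ to lie in a single fixed compact set, and a union of infinitely many compact sets indexed by $\alpha\uparrow 1$ need not be relatively compact --- nothing you have said prevents the sets $\mathcal{D}_{u_\alpha}(\lambda+1)$ from escaping to infinity as $\alpha\uparrow 1$. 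You correctly identify ``uniformity of the compact enclosure'' as the key issue, but you never supply it, and no diagonal or semicontinuity argument can substitute for it. The missing ingredient, which is the actual content of the paper's proof, is a lower bound on $u_\alpha$ by a single inf-compact function, uniform in $\alpha$: after normalizing $c\ge 0$ (which leaves $u_\alpha$ unchanged and makes $m_\alpha$ nondecreasing in $\alpha$), one has $\lambda^*:=\sup_{\alpha\in[0,1)}(1-\alpha)m_\alpha<\infty$ by (\ref{eq:schal}), and (\ref{eq12}) together with $u_\alpha\ge 0$ gives $(1-\alpha)m_\alpha+u_\alpha(x)\ge\min_{a\in A(x)}c(x,a)=v_1(x)$, whence $u_\alpha\ge v_1-\lambda^*$ for every $\alpha\in[0,1)$. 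Consequently $U_0\ge v_1-\lambda^*$ and, since $v_1-\lambda^*$ is l.s.c.\ while $\underline{u}_0$ is the largest l.s.c.\ minorant of $U_0$, also $\underline{u}_0\ge v_1-\lambda^*$; under Assumption~(${\rm\bf Wu}$) the function $v_1$ is inf-compact, so $\mathcal{D}_{\underline{u}_0}(\lambda)\subseteq\mathcal{D}_{v_1}(\lambda+\lambda^*)$ is compact and the lemma follows. Until you produce such a uniform inf-compact minorant, the inf-compactness part of your proof does not go through.
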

\begin{proof}
Since $\underline{u}_\alpha(x)\ge 0$, $\alpha\in[0,1)$ and $x\in
\X$, the functions $\underline{u}_\alpha$, $\alpha\in [0,1),$ are
lower semi-continuous; Feinberg and  Lewis \cite[Lemma 3.1]{FL}.
Since supremum over any set of lower semi-continuous functions is
a lower semi-continuous function, the function $u$ is lower
semi-continuous.

According to (\ref{eq:schal}), $
\overline{w}:=\slim\limits_{\alpha\uparrow1}(1-\alpha)m_{\alpha}=\inf\limits_{\alpha\in
(0,1)}\sup\limits_{\alpha\in[\alpha,1)}(1-\alpha)m_{\alpha}<\infty.
$ Thus, there exists $\alpha_0\in [0,1)$ such that
\begin{equation}\label{eq:estlambda*} \lambda':=
\sup\limits_{\small \alpha\in\left [\alpha_0
,1\right)}(1-\alpha)m_{\alpha}<\infty. \end{equation}

Let us assume that the function $c$ is bounded below.  As
explained in the proof of Lemma~\ref{l5.1EF}, without loss of
generality we can assume that $c\ge 0.$  Then $m_\alpha$ is a
nonnegative, nondecreasing function.  Thus, $(1-\alpha)m_\alpha\le
(1-\alpha) m_{\alpha_0}\le\lambda'/(1-\alpha_0),$ $\alpha\in
[0,\alpha_0)$, and (\ref{eq:estlambda*}) implies that
\begin{equation}\label{eq:estlambda*1}
\lambda^*=\sup\limits_{\alpha\in\left [0
,1\right)}(1-\alpha)m_{\alpha}<\infty. \end{equation}

 According to Theorem~\ref{prop:dcoe}(i, iv,v), under Assumption~(${\rm
\bf W^*}$),  the function $u_\alpha(x)=v_\alpha(x)-m_\alpha$ is
lower semi-continuous, and   a stationary policy $\phi_\alpha$ is
$\alpha$-discount optimal  if and only if for all $x\in \X$
\begin{equation}\label{eq*}
v_{\alpha}(x)=\min\limits_{a\in A(x)}\left\{c(x,a)+\alpha \int_\X
v_{\alpha}(y)q(dy|x,a)\right\}= c(x,\phi_{\alpha}(x))+\alpha
\int_\X v_{\alpha}(y)q(dy|x,\phi_{\alpha}(x)).
\end{equation}
The first equality in (\ref{eq*}) is equivalent to
\begin{equation}\label{eq12}
(1-\alpha)m_{\alpha}+u_{\alpha}(x)=\min\limits_{a\in
A(x)}\left[c(x,a)+\alpha \int_\X  u_{\alpha}(y)q(dy|x,a)\right],
\quad x\in \X.
\end{equation}

Let Assumption~(${\rm \bf Wu}$) hold.  The function
$u_\alpha(x)=v_\alpha(x)-m_\alpha$ is inf-compact by
Theorem~\ref{prop:dcoe}(vi). Consider an arbitrary $\lambda\in
\mathbb{R}_+.$ Since $u(x)\ge \underline{u}_{\alpha_1}(x)\ge
\underline{u}_{\alpha_2}(x)$, $x\in\X$,  for all
$\alpha_1,\alpha_2\in [0,1)$, $\alpha_1\ge\alpha_2$, then
$\mathcal{D}_{{u}}(\lambda)\subseteq\mathcal{D}_{\underline{u}_\alpha}(\lambda)
\subseteq\mathcal{D}_{\underline{u}_0}(\lambda)$, $\alpha\in [0,1).$
Since the functions $u$ and $\underline{u}_\alpha$ are lower
semi-continuous, the sets $\mathcal{D}_{{u}}(\lambda)$ and
$\mathcal{D}_{\underline{u}_\alpha}(\lambda)$ are closed, $\alpha\in
[0,1).$ Therefore, if the set $
\mathcal{D}_{\underline{u}_0}(\lambda)$ is compact then those sets
are also compact and the  functions $u$ and $\underline{u}_\alpha$,
$\alpha\in [0,1)$,  are inf-compact. %Thus, to complete the proof of
%the lemma, we need to show that the function $\underline{u}_{0}$
%is inf-compact.

Observe that (\ref{eq:estlambda*1}) and (\ref{eq12}) imply that
$u_\alpha(x)\ge v_1(x)-\lambda^*,$ $x\in X,$ for all $\alpha\in
[0,1).$ This implies $U_0(x)\ge v_1(x)-\lambda^*,$ $x\in X.$ Since
$\underline{u}_0$ is the largest lower-semicontinuous function that
is  less than or equal to $U_0$ at all $x\in\X$, we have
$\underline{u}_0(x)\ge v_1(x)-\lambda^*,$ $x\in X.$  Since the
function $\underline{u}_0$ is lower semi-continuous, the set
$\mathcal{D}_{\underline{u}_0}(\lambda)$ is closed.  In addition,
$\mathcal{D}_{\underline{u}_0}(\lambda)\subseteq
\mathcal{D}_{v_1}(\lambda+\lambda^*)$, where the set
$\mathcal{D}_{v_1}(\lambda+\lambda^*)$ is compact. Thus, the set
$\mathcal{D}_{\underline{u}_0}(\lambda)$ is compact, and the
functions $u$ and $\underline{u}_\alpha$, $\alpha\in [0,1)$,  are
inf-compact.
\end{proof}

\begin{corollary}\label{cor1} Under
Assumption~(${\rm \bf\underline{B}}$), for every sequence
$\alpha_n\uparrow 1$ as $n\to+\infty$ and for every $x\in \X,$
\[u(x)=\ilim\limits_{n\to+\infty,\ y\to
x}\underline{u}_{\alpha_n}(y).\]
\end{corollary}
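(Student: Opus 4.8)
The plan is to set $L(x) := \ilim_{n\to+\infty,\, y\to x}\underline{u}_{\alpha_n}(y)$ and prove $L(x)=u(x)$ by establishing the two inequalities separately. Three facts already in hand will do the work: the representation $u(x)=\sup_{\beta\in[0,1)}\underline{u}_\beta(x)=\lim_{\beta\uparrow 1}\underline{u}_\beta(x)$ from (\ref{eq:ubounded}) and (\ref{eq5.5}); the monotonicity of $\beta\mapsto\underline{u}_\beta$ noted just after (\ref{eq:ubounded}); and the lower semi-continuity of each $\underline{u}_\beta$ from Lemma~\ref{lemma1}. Throughout I will use the explicit description of the double $\ilim$ recorded after Lemma~\ref{lemma2}: $L(x)$ is the supremum of those $\lambda\in\R$ for which there exist $N$ and a neighborhood $U(x)$ of $x$ with $\lambda\le\inf\{\underline{u}_{\alpha_n}(y):\, n\ge N,\ y\in U(x)\}$.

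For $L(x)\le u(x)$: since any neighborhood of $x$ contains $x$ itself, specializing the defining inequality above to $n=N$ and $y=x$ shows that every admissible $\lambda$ satisfies $\lambda\le\underline{u}_{\alpha_N}(x)\le\sup_{\beta\in[0,1)}\underline{u}_\beta(x)=u(x)$. Taking the supremum over admissible $\lambda$ then gives $L(x)\le u(x)$ (and in particular $L(x)<\infty$, by (\ref{eq:ubounded})).

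For $L(x)\ge u(x)$: fix $\beta\in[0,1)$ and any real $\mu<\underline{u}_\beta(x)$; this is legitimate because $0\le\underline{u}_\beta(x)<\infty$ by (\ref{eqnew9}). Since $\alpha_n\uparrow 1$, pick $N$ with $\alpha_n\ge\beta$ for all $n\ge N$; monotonicity of the subscript then yields $\underline{u}_{\alpha_n}(y)\ge\underline{u}_\beta(y)$ for all $y\in\X$ and all $n\ge N$. By Lemma~\ref{lemma1} the function $\underline{u}_\beta$ is lower semi-continuous, so there is a neighborhood $U(x)$ of $x$ on which $\underline{u}_\beta>\mu$. Consequently $\inf\{\underline{u}_{\alpha_n}(y):\, n\ge N,\ y\in U(x)\}\ge\mu$, so $\mu$ is admissible in the description of $L(x)$ and hence $\mu\le L(x)$. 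Letting $\mu\uparrow\underline{u}_\beta(x)$ gives $\underline{u}_\beta(x)\le L(x)$, and then taking the supremum over $\beta\in[0,1)$ together with (\ref{eq5.5}) gives $u(x)\le L(x)$.

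The two inequalities combine to $L(x)=u(x)$. I do not expect a genuine obstacle: both bounds are only a few lines, and the only point requiring care is the bookkeeping of quantifiers in the double $\ilim$ — namely that the neighborhood in the lower bound may be chosen after $\beta$, and that the evaluation at $y=x$ is permissible in the upper bound. Equivalently, the statement is an instance of the general principle that $\ilim_{n\to+\infty,\, y\to x}g_n(y)=g(x)$ whenever $\{g_n\}$ is a pointwise nondecreasing sequence of lower semi-continuous functions with pointwise limit $g$, applied to $g_n=\underline{u}_{\alpha_n}$ and $g=u$.
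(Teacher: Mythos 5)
Your proof is correct and rests on exactly the same three ingredients as the paper's own argument — the monotonicity of $\beta\mapsto\underline{u}_\beta$, the lower semi-continuity of each $\underline{u}_\beta$, and the representation $u(x)=\sup_{\beta\in[0,1)}\underline{u}_\beta(x)$ from (\ref{eq:ubounded})--(\ref{eq5.5}) — the only difference being that the paper packages these as a single chain of equalities via the $\sup$--$\inf$ expansion of the double $\liminf$, while you unpack them into the two inequalities. No gap; this is essentially the paper's proof.
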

\begin{proof} Let $\alpha_n\uparrow 1$ as
$n\to+\infty$, and $x\in \X.$ Similar to (\ref{eq:ubounded})
\begin{align*}%\begin{split}
\ilim\limits_{n\to+\infty,\ y\to x}\underline{u}_{\alpha_n}(y)= &
\sup\limits_{n=1,2,\ldots}\ \sup\limits_{R>0}\ \inf\limits_{y\in
B_R(x)} \ \inf\limits_{m\ge n}\underline{u}_{\alpha_m}(y)=
\sup\limits_{n=1,2,\ldots}\ \sup\limits_{R>0}\ \inf\limits_{y\in
B_R(x)} \underline{u}_{\alpha_n}(y) \\= & \sup\limits_{n=1,2\ldots}\
\ilim\limits_{y\to
x}\underline{u}_{\alpha_n}(y)=\lim\limits_{n\to\infty
}\underline{u}_{\alpha_n}(x)=u(x), %\end{split}
\end{align*}
where the second equality holds because the function
$\underline{u}_\alpha(y)$ is nondecreasing in $\alpha$, the fourth
equality holds because it is lower semi-continuous, and the last
equality follows from (\ref{eq5.5}).
\end{proof}

\begin{lemma}\label{lemma3} Under
 Assumptions~(${\rm \bf W^*}$) and (${\rm \bf
\underline{B}}$), the following inequalities hold
\begin{equation}\label{eq:l31} \overline{w}+u(x)\ge
\min\limits_{a\in A(x)} \left[c(x,a)+\int_\X  u(y)q(dy| x,a)\right]
,\qquad x\in\X.
\end{equation}
\end{lemma}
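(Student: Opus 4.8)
The plan is to pass to the limit $\alpha\uparrow 1$ in the discounted optimality equation \eqref{eq12}. First I would fix $x\in\X$ and an arbitrary sequence $\alpha_n\uparrow 1$. For each $n$, Theorem~\ref{prop:dcoe}(iv,v) (applied with $\alpha=\alpha_n$) gives a stationary discount-optimal policy $\phi_{\alpha_n}$ realizing the minimum in \eqref{eq12}, so that
\[
(1-\alpha_n)m_{\alpha_n}+u_{\alpha_n}(x)=c(x,\phi_{\alpha_n}(x))+\alpha_n\int_\X u_{\alpha_n}(y)\,q(dy|x,\phi_{\alpha_n}(x)).
\]
Since $\overline{w}=\slim_{\alpha\uparrow1}(1-\alpha)m_\alpha$, by passing to a subsequence I may assume $(1-\alpha_n)m_{\alpha_n}\to\overline{w}$; and by Assumption~(${\rm \bf\underline{B}}$)(ii), $u_{\alpha_n}(x)$ is bounded, so passing to a further subsequence I may assume $u_{\alpha_n}(x)$ converges, with limit at most $\ilim_{\alpha\uparrow1}u_\alpha(x)$, which by \eqref{eqnew9} is finite and at least $u(x)$. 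In particular the left-hand side converges to a finite limit $\ell\ge u(x)+\overline{w}$ — wait, more carefully: the left side converges to $\overline{w}+\lim_n u_{\alpha_n}(x)$, and choosing the subsequence so that $\lim_n u_{\alpha_n}(x)=\ilim_{\alpha\uparrow1}u_\alpha(x)$ would be the wrong direction; instead I want the left side to be small, so I should first extract along a sequence realizing $u(x)$ in a neighborhood sense. The cleaner route: for each $R>0$ and $\beta\in[0,1)$ pick $\alpha=\alpha(R,\beta)\in[\beta,1)$ and $y=y(R,\beta)\in B_R(x)$ with $u_\alpha(y)$ within $1/R$ of $U_\beta(y)$ and $y$ chosen so $U_\beta(y)$ is within $1/R$ of $\ilim_{y'\to x}U_\beta(y')=\underline u_\beta(x)$; letting $R\to\infty$, $\beta\uparrow1$ along a diagonal sequence produces $\alpha_n\uparrow1$, $y_n\to x$ with $u_{\alpha_n}(y_n)\to\sup_\beta\underline u_\beta(x)=u(x)$ by \eqref{eq:ubounded} and \eqref{eq5.5}, and simultaneously $(1-\alpha_n)m_{\alpha_n}\to\overline w$ after a further subsequence.

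Applying \eqref{eq12} at $y_n$ instead of at $x$, there is $a_n\in A(y_n)$ with
\[
(1-\alpha_n)m_{\alpha_n}+u_{\alpha_n}(y_n)=c(y_n,a_n)+\alpha_n\int_\X u_{\alpha_n}(z)\,q(dz|y_n,a_n)\ \ge\ c(y_n,a_n).
\]
Since the left side converges to the finite number $\overline w+u(x)$, the costs $c(y_n,a_n)$ are bounded above, so by Assumption~(${\rm \bf W^*}$)(ii) the sequence $\{a_n\}$ has a limit point $a\in A(x)$; pass to a subsequence with $a_n\to a$. Now I would take the lower limit of the right-hand side. By Assumption~(${\rm \bf W^*}$)(i), $c$ is lower semi-continuous, so $\ilim_n c(y_n,a_n)\ge c(x,a)$. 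For the integral term, $q(\cdot|y_n,a_n)\to q(\cdot|x,a)$ weakly by Assumption~(${\rm \bf W}$)(iii), and the nonnegative functions $u_{\alpha_n}$ satisfy $\ilim_{n,\,z'\to z}u_{\alpha_n}(z')=u(z)$ by Corollary~\ref{cor1} (equivalently by the definition \eqref{eq131}); hence Lemma~\ref{lemma2} gives
\[
\ilim\limits_{n\to\infty}\int_\X u_{\alpha_n}(z)\,q(dz|y_n,a_n)\ \ge\ \int_\X u(z)\,q(dz|x,a).
\]
Combining, $\overline w+u(x)=\lim_n\big[(1-\alpha_n)m_{\alpha_n}+u_{\alpha_n}(y_n)\big]\ge c(x,a)+\int_\X u(z)\,q(dz|x,a)\ge\min_{a'\in A(x)}\big[c(x,a')+\int_\X u(z)\,q(dz|x,a')\big]$, where the last minimum is attained by Lemma~\ref{lm00101}(b)/Lemma~\ref{lm1} since $u\in L(\X)$ by Lemma~\ref{lemma1}. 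This is exactly \eqref{eq:l31}.

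The main obstacle is the bookkeeping in the limiting argument: one must choose a single sequence $\alpha_n\uparrow1$, $y_n\to x$ along which \emph{three} things happen at once — $(1-\alpha_n)m_{\alpha_n}\to\overline w$ (which only holds along a $\limsup$-subsequence), $u_{\alpha_n}(y_n)\to u(x)$ (which requires letting the neighborhood radius shrink and $\alpha_n$ increase together, as in \eqref{eq:ubounded}), and $a_n\to a\in A(x)$ (which needs the cost bound to already be in force). The order matters: first arrange $y_n,\alpha_n$ so that $u_{\alpha_n}(y_n)\to u(x)$ and $(1-\alpha_n)m_{\alpha_n}$ stays bounded; then extract a subsequence on which $(1-\alpha_n)m_{\alpha_n}$ converges — its limit is automatically $\le\overline w$, and that suffices since we want an upper bound on the left side, but to get exactly $\overline w$ one uses that $\overline w=\slim$, so a further refinement realizes $\overline w$; then the cost bound kicks in and Assumption~(${\rm \bf W^*}$)(ii) yields $a$; finally apply lower semi-continuity of $c$ and Lemma~\ref{lemma2}. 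Everything else is a routine application of the already-established facts.
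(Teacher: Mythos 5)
Your argument is correct, but it reaches \eqref{eq:l31} by a genuinely different route from the paper's proof of Lemma~\ref{lemma3}. The paper keeps the state fixed at $x$ throughout: starting from \eqref{eq12} it builds the auxiliary chain $u_\beta \ge U_\alpha \ge \underline{u}_\alpha$, takes $\ilim_{y\to x}$ of the whole Bellman inequality (using that the minimization operator preserves lower semi-continuity, Lemma~\ref{lm1}) to obtain the intermediate bound \eqref{eq:l33} with $\underline{u}_\alpha$ inside the integral, and only then extracts minimizers $a_\alpha\in A(x)$ and passes to the limit via Assumption~(${\rm\bf W^*}$)(ii), Corollary~\ref{cor1} and Lemma~\ref{lemma2} applied to the measures $q(\cdot|x,a_{\alpha_n})$. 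You instead perturb the state: you choose $(y_n,\alpha_n)$ realizing the $\liminf$ in \eqref{eq131} so that $u_{\alpha_n}(y_n)\to u(x)$, evaluate \eqref{eq12} at $y_n$, and pass to the limit jointly in state and action, applying Lemma~\ref{lemma2} directly to $h_n=u_{\alpha_n}$ and $\mu_n=q(\cdot|y_n,a_n)$. This is legitimate: Assumption~(${\rm\bf W^*}$)(ii) is stated precisely for varying states $y_n\to x$, the joint weak continuity (${\rm\bf W}$)(iii) handles $q(\cdot|y_n,a_n)\to q(\cdot|x,a)$, and $\ilim_{n\to\infty,\,z'\to z}u_{\alpha_n}(z')\ge u(z)$ follows from \eqref{eq131} because $\{\alpha_n:n\ge N\}\subseteq[\alpha_N,1)$ (you write equality here, but only ``$\ge$'' is true in general and only ``$\ge$'' is needed; likewise the left-hand side limit is $\ell+u(x)$ with $\ell\le\overline{w}$ rather than exactly $\overline{w}+u(x)$, which is again the right direction). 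In fact your device is exactly the one the authors use in the proof of Theorem~\ref{teorapp}. What the paper's version buys is the reusable intermediate inequality \eqref{eqalmweu} for the functions $\underline{u}_\alpha$; what yours buys is that it bypasses $U_\alpha$ and $\underline{u}_\alpha$ in the limiting step and leans only on the definition of $u$ and Lemma~\ref{lemma2}. Your closing appeal to Lemma~\ref{lemma1} and Lemma~\ref{lm1} to justify that the infimum on the right of \eqref{eq:l31} is attained is also needed and correctly placed.
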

\begin{proof}  %If the cost function $c$ is replaced with the cost function $c+K$,
%where $K$ is a finite constant, the function $u$ remains unchanged
%and the constant $\overline{w}$ becomes equal to $\overline{w}+K$.
%Thus, without loss of generality, assume that the cost function
%$c$ takes nonnegative values.  Then $m_\alpha:[0,1]\to \R_+$ and
%this function is nondecreasing. Let us fix an arbitrary
%$\varepsilon^*>0$. Since
%$
%\overline{w}=\slim\limits_{\alpha\uparrow 1}(1-\alpha)m_{\alpha},
%$
%there exists $\alpha_0\in (0,1)$ such that
%%\begin{equation}\label{eq:l32}
%$\overline{w} +\varepsilon^* > (1-\alpha)m_{\alpha}$ for all
%$\alpha\in (\alpha_0,1)$.
Let us fix an arbitrary $\varepsilon^*>0$. Since $
\overline{w}=\slim\limits_{\alpha\uparrow 1}(1-\alpha)m_{\alpha},
$ there exists $\alpha_0\in [0,1)$ such that
\begin{equation}\label{eq:l32}
\overline{w} +\varepsilon^* > (1-\alpha)m_{\alpha},
\qquad\quad\qquad\qquad\qquad \alpha\in [\alpha_0,1).
\end{equation}

Our next goal is to prove the inequality

\begin{equation}\label{eq:l33}
\overline{w}+\varepsilon^*+u(x)\ge\min\limits_{a\in A(x)}
\left[c(x,a)+\alpha\int_\X  \underline{u}_\alpha(y)q(dy|
x,a)\right],\qquad x\in\X,\ \alpha\in [\alpha_0,1).
\end{equation}

Indeed, by (\ref{eq12}) and (\ref{eq:l32}) for every
$\alpha,\beta\in [\alpha_0,1)$, such that $\alpha\le \beta$, and
for every $x\in \X$
\[
\overline{w}+\varepsilon^*+u_\beta(x)>
(1-\beta)m_{\beta}+u_{\beta}(x)=\min\limits_{a\in
A(x)}\left[c(x,a)+\beta\int_\X  u_{\beta}(y)q(dy|x,a)\right]\ge
\]
\[
\ge \min\limits_{a\in A(x)}\left[c(x,a)+\alpha\int_\X
U_{\alpha}(y)q(dy|x,a)\right].
\]
As right-hand side does not depend on $\beta\in[\alpha,1)$, we
have for all $x\in\X$ and for all $\alpha\in [\alpha_0,1)$
\[
\overline{w}+\varepsilon^*+U_\alpha(x)=
\inf\limits_{\beta\in[\alpha,1)}\left[\overline{w}+\varepsilon^*+u_\beta(x)
\right]
 \ge\min\limits_{a\in A(x)}\left[c(x,a)+\alpha\int_\X
U_{\alpha}(y)q(dy|x,a)\right]\ge
\]
\[
\ge\min\limits_{a\in A(x)}\left[c(x,a)+\alpha\int_\X
\underline{u}_{\alpha}(y)q(dy|x,a)\right]= \min\limits_{a\in
A(x)}\eta_{\underline{u}_{\alpha}}^{\alpha}(x,a).
\]
By Lemma~\ref{lm1}, the function $x\to \min\limits_{a\in
A(x)}\eta_{\underline{u}_{\alpha}}^{\alpha}(x,a)$ is lower
semi-continuous on $\X$. Thus,
\[
\ilim\limits_{y\to x}\min\limits_{a\in
A(y)}\eta_{\underline{u}_{\alpha}}^{\alpha}(y,a)\ge
\min\limits_{a\in
A(x)}\eta_{\underline{u}_{\alpha}}^{\alpha}(x,a), \qquad x\in\X,\
\alpha\in [0,1).
\]
and, as, by definition (\ref{eq:defuuU}),
$\underline{u}_{\alpha}(x)=\ilim\limits_{y\to x}{U}_{\alpha}(y)$,
we finally obtain
\begin{equation}\label{eqalmweu}
\overline{w}+\varepsilon^*+\underline{u}_\alpha(x)\ge\min\limits_{a\in
A(x)}\eta_{\underline{u}_{\alpha}}^{\alpha}(x,a),\qquad\qquad
 x\in\X, \alpha\in [\alpha_0,1).
\end{equation}
As, by (\ref{eq:defuuU}), $u(x)=\sup\limits_{\alpha\in
[\alpha_0,1)}\underline{u}_\alpha(x)$ for all $x\in\X$,
(\ref{eqalmweu}) yields (\ref{eq:l33}).

To complete the proof of the lemma,  we fix an arbitrary $x\in\X$.
By Lemma~\ref{lm1}, for any $\alpha\in [0,1)$ there exists
$a_\alpha\in A(x)$ such that $ \min\limits_{a\in
A(x)}\eta_{\underline{u}_{\alpha}}^{\alpha}(x,a)=
\eta_{\underline{u}_{\alpha}}^{\alpha}(x,a_{\alpha}).$
 Since $\underline{u}_\alpha\ge 0$, for
$\alpha\in [\alpha_0,1)$ the inequality  (\ref{eq:l33}) can be
continued as
\begin{equation}\label{eq:5.14}
\overline{w}+\varepsilon^*+u(x)\ge
\eta_{\underline{u}_{\alpha}}^{\alpha}(x,a_{\alpha})\ge
c(x,a_{\alpha}).
\end{equation}
Thus, for all $\alpha\in [\alpha_0,1)$
\[
a_\alpha\in
\mathcal{D}_{\eta_{\underline{u}_{\alpha}}^{\alpha}(x,\cdot)}(\overline{w}+\varepsilon^*+u(x))\subseteq
\mathcal{D}_{c(x,\cdot)}(\overline{w}+\varepsilon^*+u(x))\subseteq
A(x).
\]
By Lemma~\ref{lm00101}, the set
$\mathcal{D}_{c(x,\cdot)}(\overline{w}+\varepsilon^*+u(x))$ is
compact. Thus, for every sequence $\beta_n\uparrow 1$ of numbers
from $[\alpha_0,1)$ there is a  subsequence $\{\alpha_n\}_{n\ge
1}$ such that the sequence $\{a_{\alpha_n}\}_{n\ge 1}$ converges
and $a_*:=\lim_{n\to\infty} a_{\alpha_n}\in A(x)$.

Consider a sequence $\alpha_n\uparrow 1$ such that
$a_{\alpha_n}\to a_*$ for some $a_*\in A(x).$
 Due to Lemmas~\ref{lemma2} and Corollary~\ref{cor1},
\begin{equation}\label{eq:5.15} \ilim\limits_{n\to +\infty}
\alpha_{n}\int_\X
\underline{u}_{\alpha_n}(y)q(dy|x,a_{n})\ge%\int_\X \left (
%\ilim\limits_{\alpha_n\uparrow 1,\ z\to
%y}\underline{u}_{\alpha_n}(z)\right ) q(dy|x,a)=
\int_\X u(y)q(dy|x,a_*).
\end{equation}

Since the function $c$ is lower semi-continuous, (\ref{eq:5.14})
and (\ref{eq:5.15}) imply
\[\overline{w}+\varepsilon^*+u(x)\ge \limsup\limits_{n\to\infty}
\eta_{\underline{u}_{\alpha_n}}^{\alpha_n}(x,a_{\alpha_n})\ge
c(x,a_*)+\int_\X u(y)q(dy|x,a_*)\ge\min_{a\in A(x)} \eta_u^1(x,a).
\]
Since $\overline{w}+\varepsilon^*+u(x)\ge\min_{a\in A(x)}
\eta_u^1(x,a)$ for any $\varepsilon^*>0$, this is also true when
$\varepsilon^*=0$.
\end{proof}

\begin{proof}[Proof of Theorem~\ref{teor1}] Lemma~\ref{lemma1} contains statements
{\rm ({\bf a})} and {\rm{({\bf e})}}. Since ${\rm
Gr}(A^*)=\{(x,a)\in {\rm Gr}(A):\, g(x,a)\ge 0\}$, where
$g(x,a)=\overline{w}+u(x)- c(x,a)-\int_\X  u(y)q(dy|x,a)$ is a Borel
function, the set ${\rm Gr}(A^*) $ is Borel. The sets $A^*(x)$,
$x\in\X$, are compact in view of Lemma~\ref{lm00101}(b). Thus, the
statement {\rm{({\bf b})}} is proved.  The Arsenin-Kunugui theorem
implies the existence of a stationary policy $\phi$ such that
$\phi(x)\in A^*(x)$ for all $x\in\X.$ Statement {\rm{({\bf e})}}
follows from Lemma~\ref{lm1} and the Arsenin-Kunugui theorem. The
rest follows from Theorem~\ref{Prop1}.
\end{proof}

\begin{theorem}\label{teor3}
Suppose Assumptions~(${\rm \bf W^*}$) and
 (${\rm \bf B}$) hold. Then all the conclusions of
 Theorem~\ref{teor1} hold and, in addition, for a stationary policy $\phi$ satisfying (\ref{eq7111}) with $u$ defined in
 (\ref{eq131}),
\begin{equation} \label{eq5.16}
w^\phi(x)=\underline{w}=\lim\limits_{\alpha\uparrow
1}(1-\alpha)v_\alpha(x)=\lim\limits_{N\to\infty}\frac{1}{N}v^\phi_N(x),\qquad
x\in\X.
%=\lim\limits_{n\to\infty}\frac{1}{n}v_n(x)
\end{equation}
\end{theorem}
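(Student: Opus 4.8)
The plan is to reduce everything to Theorem~\ref{teor1}. Since Assumption~(${\rm \bf B}$) implies Assumption~(${\rm \bf \underline{B}}$), Theorem~\ref{teor1} applies, so the stationary policy $\phi$ of the statement satisfies~(\ref{eq7111}) with $u$ as in~(\ref{eq131}), and~(\ref{eq:7121}) gives $w(x)=w^\phi(x)=\slim_{\alpha\uparrow1}(1-\alpha)v_\alpha(x)=\overline{w}=w^*$ for all $x\in\X$. Hence the genuinely new content of~(\ref{eq5.16}) is: (a) the collapse $\underline{w}=\overline{w}$, which identifies the first $\slim$ in~(\ref{eq:7121}) with $\underline{w}$; and (b) that $\slim_{\alpha\uparrow1}(1-\alpha)v_\alpha(x)$ and $\slim_{N\to\infty}\frac1N v_N^\phi(x)$ are genuine limits.

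For (a) I would rerun the proof of Lemma~\ref{lemma3}, but along a sequence $\alpha_n\uparrow1$ chosen so that $(1-\alpha_n)m_{\alpha_n}\to\underline{w}$ instead of along all $\alpha\in[\alpha_0,1)$. Put $\hat U_k(x)=\inf_{n\ge k}u_{\alpha_n}(x)$, $\underline{\hat U}_k(x)=\ilim_{y\to x}\hat U_k(y)$, and $\hat u(x)=\sup_k\underline{\hat U}_k(x)$; exactly as in~(\ref{eq:ubounded}) and Corollary~\ref{cor1}, $\hat u(x)=\ilim_{n\to\infty,\,y\to x}u_{\alpha_n}(y)$, while $\hat u(x)\le\ilim_{n\to\infty}u_{\alpha_n}(x)\le\slim_{\alpha\uparrow1}u_\alpha(x)<\infty$ by Assumption~(${\rm \bf B}$) and Lemma~\ref{l5.1EF}; thus $\hat u:\X\to\R_+$ is finite and, by the argument of Lemma~\ref{lemma1}, lower semi-continuous. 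Starting from~(\ref{eq12}), using $(1-\alpha_n)m_{\alpha_n}<\underline{w}+\varepsilon^*$ for large $n$ together with the inequality $\alpha_n\int_\X u_{\alpha_n}(y)q(dy|x,a)\ge\alpha_k\int_\X\hat U_k(y)q(dy|x,a)$ for $k\le n$ (valid since $\alpha_n\ge\alpha_k$ and $u_{\alpha_n}\ge\hat U_k\ge0$), one obtains $\underline{w}+\varepsilon^*+\hat U_k(x)\ge\min_{a\in A(x)}\eta_{\hat U_k}^{\alpha_k}(x,a)$ for all large $k$; passing to lower semi-continuous envelopes as in Lemma~\ref{lemma3} (with Lemma~\ref{lm1} giving the lower semi-continuity of $x\mapsto\min_{a\in A(x)}\eta_{\underline{\hat U}_k}^{\alpha_k}(x,a)$) and then letting $k\to\infty$ (monotone convergence in the integral, and the fact that the minimum over $A(x)$ of an increasing sequence of functions inf-compact on $A(x)$ tends to the minimum of the inf-compact limit) yields
\[
\underline{w}+\hat u(x)\ge\min_{a\in A(x)}\Big[c(x,a)+\int_\X\hat u(y)q(dy|x,a)\Big],\qquad x\in\X .
\]
By Lemma~\ref{lm1} there is $\psi\in\F$ with $\underline{w}+\hat u(x)\ge c(x,\psi(x))+\int_\X\hat u(y)q(dy|x,\psi(x))$, which is precisely hypothesis~(\ref{eq7}); Sch\"al's Proposition~1.3 then gives $w(x)=w^*=\underline{w}=\overline{w}$ for all $x$. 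Write $g$ for this common value; then $w^\phi(x)=g=\underline{w}$, the first equality in~(\ref{eq5.16}).

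For (b), since $v_\alpha(x)=m_\alpha+u_\alpha(x)$ and $0\le(1-\alpha)u_\alpha(x)\le(1-\alpha)\sup_{\beta\in[0,1)}u_\beta(x)\to0$ by Assumption~(${\rm \bf B}$), we get $\lim_{\alpha\uparrow1}(1-\alpha)v_\alpha(x)=\lim_{\alpha\uparrow1}(1-\alpha)m_\alpha=g$, the middle equality. For the last one, $\slim_{N\to\infty}\frac1N v_N^\phi(x)=w^\phi(x)=g$ holds by definition~(\ref{eq2}), so it remains to prove $\ilim_{N\to\infty}\frac1N v_N^\phi(x)\ge g$. Here I would use $v_N^\phi(x)\ge v_N(x)$ and estimate $v_N(x)$ from below by iterating the discounted cost optimality equation~(\ref{eq5a}): from $c(y,a)\ge v_\alpha(y)-\alpha\int_\X v_\alpha(z)q(dz|y,a)$, summing along an arbitrary policy $\pi$ gives $v_N^\pi(x)\ge v_\alpha(x)+(1-\alpha)\sum_{k=1}^{N-1}\E_x^\pi v_\alpha(\xi_k)-\alpha\E_x^\pi v_\alpha(\xi_N)\ge(1-\alpha)Nm_\alpha-\alpha\E_x^\pi v_\alpha(\xi_N)$, and choosing $\alpha=\alpha_N\uparrow1$ with $(1-\alpha_N)m_{\alpha_N}\to g$ should yield $\ilim_N\frac1N v_N(x)\ge g$. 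The step I expect to be the main obstacle is exactly this last one — controlling the boundary term $\E_x^\pi v_{\alpha_N}(\xi_N)$ uniformly over the ($\varepsilon$-optimal) policies $\pi$ as $N\to\infty$ — which is where the pointwise boundedness of $u_\alpha$ from Assumption~(${\rm \bf B}$) and the bound $v_N^\phi(x)\le Ng+u(x)$ obtained by iterating~(\ref{eq7111}) have to be combined carefully; everything else in~(\ref{eq5.16}) is a direct consequence of Theorem~\ref{teor1} and step~(a).
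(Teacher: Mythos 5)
Your part (a) and your proof of the middle equality of (\ref{eq5.16}) follow the paper's own argument almost verbatim: the paper also fixes a sequence $\alpha(n)\uparrow 1$ with $(1-\alpha(n))m_{\alpha(n)}\to\underline{w}$, builds the function $\tilde u(x)=\ilim_{n\to+\infty,\,y\to x}u_{\alpha(n)}(y)$ through the same intermediate functions ${\tilde U}_n$ and $\underline{\tilde u}_n$, reruns the proof of Lemma~\ref{lemma3} along this subsequence to obtain $\underline{w}+\tilde u(x)\ge\min_{a\in A(x)}\eta_{\tilde u}^1(x,a)$, and then invokes Lemma~\ref{lm1}, the Arsenin--Kunugui theorem, and Sch\"al's Proposition~1.3 (inequality (\ref{eq7})) to conclude $\overline{w}=\underline{w}=w(x)=w^*$; the identity $\lim_{\alpha\uparrow1}(1-\alpha)v_\alpha(x)=\lim_{\alpha\uparrow1}(1-\alpha)m_\alpha$ is likewise obtained from $(1-\alpha)u_\alpha(x)\to0$ under Assumption~(${\rm\bf B}$). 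Up to that point your proposal is correct and is essentially the paper's proof.

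The last equality of (\ref{eq5.16}) is where your route genuinely breaks down, and you have correctly located the breaking point yourself. Iterating (\ref{eq5a}) along a policy produces the boundary term $\alpha\,\E_x^{\pi}v_{\alpha}(\xi_N)=\alpha m_\alpha+\alpha\,\E_x^{\pi}u_{\alpha}(\xi_N)$, and Assumption~(${\rm\bf B}$) controls $u_\alpha$ only pointwise in $x$; nothing in the hypotheses makes $\E_x^{\phi}u_{\alpha}(\xi_N)$ finite, let alone $o(N)$, so the lower bound $\ilim_N\frac1N v_N^{\phi}(x)\ge g$ cannot be extracted this way (and going through $v_N(x)=\inf_\pi v_N^\pi(x)$ only makes the required uniformity harder). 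The paper avoids any boundary term entirely by a Tauberian argument: since the costs may be taken nonnegative, the Abelian inequality gives $w^{\phi}(x)=\slim_N\frac1N v_N^{\phi}(x)\ge\slim_{\alpha\uparrow1}(1-\alpha)v_{\alpha}^{\phi}(x)\ge\ilim_{\alpha\uparrow1}(1-\alpha)v_{\alpha}^{\phi}(x)\ge\lim_{\alpha\uparrow1}(1-\alpha)v_{\alpha}(x)=w^{\phi}(x)$, which forces $\lim_{\alpha\uparrow1}(1-\alpha)v_{\alpha}^{\phi}(x)$ to exist and equal $w^{\phi}(x)$; the Karamata (Hardy--Littlewood) Tauberian theorem for nonnegative sequences, cited from Sennott, then yields the existence of $\lim_N\frac1N v_N^{\phi}(x)$ and its equality with $w^{\phi}(x)$. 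That citation is doing real work --- it supplies exactly the implication (from Abel convergence to Ces\`aro convergence, with nonnegativity as the only Tauberian condition) that your direct dynamic-programming estimate cannot deliver. To complete your proof you should replace the iteration of the DCOE by this Tauberian step.
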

\begin{proof}
Consider a sequence $\{\alpha(n)\}_{n\ge1}$  such that $
\alpha(n)\uparrow 1$ as $ n\to+\infty,$ and
\[\lim\limits_{n\to+\infty}(1-\alpha(n))m_{\alpha(n)}=\underline{w}.
\]

Define the following nonnegative functions on $\X$:
\[
{\tilde U}_n(x)  = \inf\limits_{m\ge n}u_{\alpha(m)}(x),\
\underline{{\tilde u}}_n(x) = \ilim\limits_{y\to x}{\tilde U}_n(y),
\quad n\ge 1,\,\, x\in \mathbb{X},
\]
and
\begin{equation}\label{eq:deftildeu}
{\tilde u}(x)=\sup\limits_{n\ge 1}\underline{{\tilde
u}}_n(x),\,\,x\in \mathbb{X}.
\end{equation}
Observe that
\begin{equation}\label{eq5.18E}\underline{{\tilde u}}_n(x)\le {\tilde U}_n(x)\le
\slim_{m\to+\infty }u_{\alpha(m)}(x)<\infty,\quad x\in\X,\
n=1,2,\ldots,\end{equation} where the first two inequalities follow
from the definitions of $\underline{{\tilde u}}_n$ and ${\tilde
U}_n$ respectively, and the last inequality follows from
Assumption~(${\rm \bf B}$). As follows from (\ref{eq:deftildeu}) and
(\ref{eq5.18E}), ${\tilde u}(x)\le\slim_{m\to+\infty
}u_{\alpha(m)}(x)<+\infty$. According to Feinberg and Lewis
\cite[Lemma 3.1]{FL}, the functions $\underline{{\tilde u}}_n$,
$n\ge 1,$ are lower semi-continuous on $\X$. Therefore, their
supremum $\tilde u$ is also lower semi-continuous. In addition,
\[
{\tilde u}(x)= \sup\limits_{n\ge 1}\ \sup\limits_{R>0}\
\inf\limits_{y\in B_R(x)} \ \inf\limits_{m\ge n}u_{\alpha_m}(y)=
\ilim\limits_{n\to +\infty,\, y\to x }u_{\alpha(n)}(y),\quad x\in
\mathbb{X},
\]
where the first equality follows from the definitions of ${\tilde
U}_n$, $\underline{{\tilde u}}_n,$ and $\tilde u$, and the second
equality is the definition of the $\liminf$. Since ${\tilde
U}_n(x)\uparrow $, we have $\underline{{\tilde u}}_n(x)\uparrow
{\tilde u}(x)$ as $n\to\infty$ for all $x\in\X.$

We  show next that for each $x\in\X$
\begin{equation}\label{eq141}
\underline{w}+{\tilde u}(x)\ge \inf\limits_{a\in
A(x)}\left[c(x,a)+\int_\X {\tilde u}(y)q(dy|x,a)\right].
\end{equation}
Indeed let us fix any $\varepsilon^*>0$. By the definition of
$\underline{w}$, there exists a subsequence $\{\alpha(n_k)\}_{k\ge
1}\subseteq \{\alpha(n)\}_{n\ge 1}$ such that for  $k=1,2,\ldots$
\[
\underline{w} +{\varepsilon^*}\ge (1-\alpha(n_k))m_{\alpha(n_k)}.
\]
Let $x\in \mathbb{X}$ be an arbitrary state. By
Theorem~\ref{prop:dcoe} for each $k\ge 1$ there exists $a_{n_k}\in
A_{\alpha(n_k)}(x)$ such that
$$
(1-\alpha({n_k}))m_{\alpha({n_k})}+{
u}_{\alpha({n_k})}(x)=c(x,a_{n_k})+\alpha({n_k})\int_\X {
u}_{\alpha({n_k})}(y)q(dy|x,a_{n_k}).
$$
Thus, similarly to the proof of Lemma~\ref{lemma3}, we get
(\ref{eq141}).

From Lemma~\ref{lm1} and the Arsenin-Kunugui theorem there exists
a stationary policy ${\tilde \phi}\in\mathbb{F}$ such that for any
$x\in \X$ \begin{equation}\label{eq5.20E} \underline{w}+{\tilde
u}(x)\ge c(x, {\tilde \phi}(x))+\int_\X {\tilde u}(y)q(dy|x,
{\tilde \phi}(x)).
\end{equation}
Thus, by Sch$\ddot{\rm {a}}$l \cite[Proposition~1.3]{Schal}
described in (\ref{eq7}), for all $x\in \X$
\begin{equation}\label{999AB}
\overline{w}=\underline{w}=w(x)=w^{{\tilde
\phi}}(x)=\lim\limits_{\alpha\uparrow
1}(1-\alpha)v_{\alpha}(x)=w^*. %\quad\forall x\in \X.
\end{equation}

 Let us choose any stationary policy $\phi$ such
that inequalities (\ref{eq7}) and (\ref{eq7111}) hold with the
function $u$ defined in (\ref{eq131}). Since
$\overline{w}=\underline{w} ,$ according to Theorem~\ref{teor1},
 such a stationary policy exists. Theorem~\ref{Prop1}  implies that the
stationary policy $\phi$ satisfies (\ref{eq:7121}), and Sch\"al
\cite[Proposition~1.3]{Schal} (see (\ref{eq7})) implies that
(\ref{999AB}) holds with ${\tilde \phi}=\phi$.
%
%********************************
%
% that both Theorem~\ref{Prop1} and presented in
%(\ref{eq7}) \cite[Proposition~1.3]{Schal} can be applied to the
%function $u$ defined in (\ref{eq131}) and the corresponding
%stationary policy $\phi$ satisfies (\ref{eq:7121}) and
%(\ref{999AB}).

  In addition, (\ref{999AB}) with ${\tilde \phi}=\phi$ implies that for all
$x\in \X$
\[ w^\phi(x)=\lim\limits_{\alpha\uparrow 1}(1-\alpha) m_\alpha=
\lim\limits_{\alpha\uparrow
1}(1-\alpha)(v_\alpha(x)-u_\alpha(x))=\lim\limits_{\alpha\uparrow
1}(1-\alpha) v_\alpha(x),
\]
where the last equality follows from Assumption~(${\rm \bf B}$).
Thus, for all $x\in \X$
\begin{align*}%\begin{split}
w^\phi(x) & =
\slim\limits_{n\to\infty}\frac{1}{n}v^\phi_n(x)\ge\slim\limits_{\alpha\uparrow
1}(1-\alpha) v^\phi_\alpha(x) \ge \ilim\limits_{\alpha\uparrow
1}(1-\alpha) v^\phi_\alpha(x)\\ & \ge \lim\limits_{\alpha\uparrow
1}(1-\alpha) v_\alpha(x)=w^\phi(x),%\end{split}
\end{align*}
where the first inequality follows from the Tauberian theorem (see
Sennott~\cite[Section A.4]{Senb} or \cite[Proposition 5.7]{Sens}),
and the last inequality follows from $v_\alpha^\phi(x)\ge
v_\alpha(x)$ and the existence of the limit.
So, we have, the existence of %the limit $
$\lim\limits_{\alpha\uparrow 1}(1-\alpha) v^\phi_\alpha(x).$ Thus,
 the Karamata Tauberian theorem (Sennott~\cite[Section A.4]{Senb}
or \cite[Proposition 5.7]{Sens}) implies
$w^\phi(x)=\lim_{n\to\infty}\frac{1}{n}v_n^\phi(x).$
%The theorem is proved.
\end{proof}

\begin{corollary}
Under Assumptions~(${\rm \bf W^*}$) and (${\rm \bf
\underline{B}}$), the conclusions of Theorems~\ref{teor1} and
\ref{teor3} remain correct, if the function $u$ is substituted
with the function $\tilde u$ defined in (\ref{eq:deftildeu}).
\end{corollary}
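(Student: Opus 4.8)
The plan is to verify that $\tilde u$, defined in (\ref{eq:deftildeu}), satisfies exactly the same structural properties that were used for $u$ in the proofs of Theorems~\ref{teor1} and \ref{teor3}, and then to re-run those arguments verbatim with $u$ replaced by $\tilde u$. Concretely, the proof of Theorem~\ref{teor3} already establishes four facts about $\tilde u$: it takes finite nonnegative values on $\X$, it is lower semi-continuous on $\X$ (as a supremum of the lower semi-continuous functions $\underline{\tilde u}_n$), it satisfies $\underline{\tilde u}_n(x)\uparrow\tilde u(x)$, and — crucially — it obeys the optimality inequality (\ref{eq5.20E}), i.e. there is a stationary policy $\tilde\phi$ with $\underline w+\tilde u(x)\ge c(x,\tilde\phi(x))+\int_\X\tilde u(y)q(dy|x,\tilde\phi(x))$. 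Since Theorem~\ref{teor1} was proved under Assumption~(${\rm \bf\underline{B}}$) only (not (${\rm \bf B}$)), I first need to observe that the construction of $\tilde u$ and the derivation of (\ref{eq141}) and (\ref{eq5.20E}) go through under (${\rm \bf\underline{B}}$) alone: the only place (${\rm \bf B}$) was invoked in the proof of Theorem~\ref{teor3} was to guarantee $\slim_{m\to\infty}u_{\alpha(m)}(x)<\infty$ in (\ref{eq5.18E}), but under (${\rm \bf\underline{B}}$) we have $\tilde U_n(x)\le U_0(x)\le\ilim_{\alpha\uparrow1}u_\alpha(x)<\infty$ by the same reasoning as in (\ref{eqnew9}), so all three functions are still finite-valued.

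Next I would define $\tilde A^*(x)=\{a\in A(x):\overline w+\tilde u(x)\ge c(x,a)+\int_\X\tilde u(y)q(dy|x,a)\}$ in analogy with (\ref{defsetA*}), and let $\tilde A_*(x)$ be the sets (\ref{eq:lm000}) for the function $\tilde u$. The four bulleted properties of Theorem~\ref{teor1} then follow exactly as before: statement (a) is the lower semi-continuity of $\tilde u$ just recalled; (b$_1$) holds because ${\rm Gr}(\tilde A^*)=\{(x,a)\in{\rm Gr}(A):\tilde g(x,a)\ge0\}$ with $\tilde g(x,a)=\overline w+\tilde u(x)-c(x,a)-\int_\X\tilde u(y)q(dy|x,a)$ Borel, using that $\tilde u\in L(\X)$ so $\eta_{\tilde u}^1$ is lower semi-continuous by Lemma~\ref{lm1}; (b$_2$) is compactness of $\tilde A^*(x)$, which follows from Lemma~\ref{lm00101}(b) since $\tilde A^*(x)\subseteq\mathcal D_{c(x,\cdot)}(\overline w+\tilde u(x))$ when $\tilde u(x)<\infty$; and the existence of a selector into $\tilde A^*(x)$ (and into $\tilde A_*(x)$) is Arsenin-Kunugui applied to these Borel graphs with $\sigma$-compact cuts. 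Statement (c) then reduces to Theorem~\ref{Prop1}: any $\phi$ with $\phi(x)\in\tilde A^*(x)$ satisfies (\ref{eq7111}) with $\tilde u$ in place of $u$, hence is average-cost optimal with the equalities (\ref{eq:7121}). For statement (e), if Assumption~(${\rm \bf Wu}$) also holds, the argument of Lemma~\ref{lemma1} applies with $u_{\alpha(n)}$ in place of the full family: each $u_{\alpha(n)}$ is inf-compact by Theorem~\ref{prop:dcoe}(vi), and the bound $\underline{\tilde u}_n(x)\ge v_1(x)-\lambda^*$ gives $\mathcal D_{\underline{\tilde u}_0}(\lambda)\subseteq\mathcal D_{v_1}(\lambda+\lambda^*)$, so $\tilde u$ is inf-compact.

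For the Theorem~\ref{teor3} part, once (\ref{999AB}) is established via (\ref{eq5.20E}) and Sch\"al \cite[Proposition~1.3]{Schal}, the chain of equalities (\ref{eq5.16}) is derived exactly as in the existing proof of Theorem~\ref{teor3}: using $\overline w=\underline w$, any $\phi$ with $\phi(x)\in\tilde A^*(x)$ satisfies both (\ref{eq7}) and (\ref{eq7111}) with $\tilde u$, so Theorem~\ref{Prop1} gives the $\limsup$ identity and Sch\"al's result gives (\ref{999AB}) with $\tilde\phi=\phi$; then the Tauberian/Karamata argument with $v_\alpha^\phi(x)\ge v_\alpha(x)$ and Assumption~(${\rm \bf B}$) produces $\lim_{\alpha\uparrow1}(1-\alpha)v_\alpha^\phi(x)=w^\phi(x)=\lim_N\frac1N v_N^\phi(x)$. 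I expect no genuine obstacle here; the one point requiring care is simply to confirm that the derivation of the optimality inequality for $\tilde u$ — i.e. the analogue of Lemma~\ref{lemma3} leading to (\ref{eq141}) — uses only the monotone convergence $\underline{\tilde u}_n\uparrow\tilde u$, Lemma~\ref{lemma2}, and the weak-continuity machinery of Lemma~\ref{lm1}, all of which are available under (${\rm \bf W^*}$) and (${\rm \bf\underline{B}}$) without (${\rm \bf B}$). Thus the corollary is really the observation that $\tilde u$ is an equally good choice of ``bias-like'' function, and every proof above is insensitive to replacing $u$ by $\tilde u$.
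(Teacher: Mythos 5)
Your overall route is the same as the paper's: observe that $\tilde u$ is nonnegative, lower semi-continuous and finite-valued, that it satisfies the optimality inequality (\ref{eq5.20E}) produced in the proof of Theorem~\ref{teor3}, and then rerun the proofs of Theorems~\ref{teor1} and \ref{teor3} (via Theorem~\ref{Prop1}, Lemmas~\ref{lm1} and \ref{lm00101}, and the Arsenin--Kunugui theorem) with $u$ replaced by $\tilde u$. That structure matches the paper's proof of the corollary.

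The one concrete error is the inequality $\tilde U_n(x)\le U_0(x)$ that you use to claim finiteness of $\tilde u$ under Assumption~(${\rm \bf \underline{B}}$) alone. It goes the wrong way: $\tilde U_n(x)=\inf_{m\ge n}u_{\alpha(m)}(x)$ is an infimum over a subfamily of $\{u_\alpha(x)\}_{\alpha\in[0,1)}$, hence $\tilde U_n(x)\ge U_0(x)$, not $\le$. More to the point, $\sup_{n}\tilde U_n(x)=\ilim_{m\to\infty}u_{\alpha(m)}(x)$ is a lower limit along the \emph{particular} subsequence $\{\alpha(n)\}$ chosen so that $(1-\alpha(n))m_{\alpha(n)}\to\underline w$, while Assumption~(${\rm \bf \underline{B}}$) only bounds $\ilim_{\alpha\uparrow1}u_\alpha(x)$ over the full family; along a prescribed subsequence the lower limit can be strictly larger and even infinite. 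The finiteness of $\tilde u$ in the paper comes from (\ref{eq5.18E}), that is, from $\slim_{m\to\infty}u_{\alpha(m)}(x)<\infty$, which is Assumption~(${\rm \bf B}$)(ii) (equivalently, by Lemma~\ref{l5.1EF}, $\slim_{\alpha\uparrow1}u_\alpha(x)<\infty$). So your attempt to carry out the whole construction of $\tilde u$ under (${\rm \bf \underline{B}}$) without (${\rm \bf B}$) does not work as written; the paper's own proof simply imports the needed facts about $\tilde u$ (finiteness, lower semi-continuity, and (\ref{eq5.20E})) from the proof of Theorem~\ref{teor3}, where Assumption~(${\rm \bf B}$) is in force. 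Apart from this point, the remainder of your argument (the Borel graph and compactness of the sets $\tilde A^*(x)$, the selector, the application of Theorem~\ref{Prop1} and of Sch\"al's Proposition~1.3, and the inf-compactness under (${\rm \bf Wu}$)) is correct.
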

\begin{proof}
As shown in the proof of Theorem~\ref{teor3}, there exists a
stationary policy $\tilde\phi$ satisfying (\ref{eq5.20E}).  The
function $\tilde u$ is nonnegative, lower semi-continuous, and takes
finite values.  Thus, both \cite[Proposition~1.3]{Schal} (see
(\ref{eq7})) and Theorem~\ref{Prop1} can be applied to this
function. The proof of statements (a)--(d) of Theorem~\ref{teor1}
uses just these properties of $u$.  Statement (e) follows from
Lemma~\ref{lemma1}, whose proof remains unchanged if $u$ is replaced
with $\tilde u$.
\end{proof}

\section{Approximation of Average Cost Optimal Strategies by $\alpha$-discount Optimal
Strategies}\label{s6} For a family of sets $\{{\rm
Gr}(A_\alpha)\}_{\alpha\in (0,1)}$, $x\in \X$, considered in
Theorem~\ref{prop:dcoe}, we pay our attention to its upper
topological limit
\[
\mathop{\rm \overline{Lim}}\limits_{\alpha \uparrow 1} {\rm
Gr}(A_\alpha)=\left\{(x,a)\in \X\times\mathbb{A}\, :
\begin{array}{l} \exists \alpha_{n}\uparrow1, \, n\to
+\infty, \ \exists (x_n,a_n) \in {\rm Gr}(A_{\alpha_n}), \, n\ge
1,\\
\mbox{ such that } (x,a)=\lim\limits_{n\to+\infty} (x_n,a_{n})
\end{array}
\right\},
\]
defined, for example, in Zgurovsky et al.~\cite[Chapter~1,
p.~3]{ZMK1}. Let us set
\[
 A^{app}(x):=\left\{a\in A^*(x)\, : \,
(x,a)\in \mathop{\rm \overline{Lim}}\limits_{\alpha \uparrow 1}
{\rm Gr}(A_\alpha) \right\}, \qquad x\in\X.
\]

\begin{theorem}\label{teorapp}
Under Assumptions~(${\rm \bf W^*}$) and (${\rm \bf
\underline{B}}$), the graph ${\rm Gr}(A^{app})$ is a Borel subset
of ${\rm Gr}(A^*)$, and for each $x\in\X$ the set $A^{app}(x)$ is
nonempty and compact. Furthermore, there exists a stationary
 policy $\phi^{app}$ such that
$\phi^{app}(x)\in A^{app}(x)$ for all $x\in X$, and any such policy is average-cost optimal.%, and any
%stationary policy $\phi$ is average-cost optimal, if $\phi(x)\in
%A^*(x)$ for all $x\in\X.$

\end{theorem}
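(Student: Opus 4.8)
The plan is to prove the three assertions in turn: that ${\rm Gr}(A^{app})$ is Borel and contained in ${\rm Gr}(A^*)$; that each $A^{app}(x)$ is compact and nonempty; and that a Borel selector of $A^{app}$ exists and every such selector is average-cost optimal. The first assertion and the compactness half of the second are soft. Indeed, the upper topological limit $\overline{\rm Lim}_{\alpha\uparrow 1}{\rm Gr}(A_\alpha)$ is a closed subset of $\X\times\A$: if a sequence of its points converges, a diagonal selection (for the $j$-th point pick an element of some ${\rm Gr}(A_\alpha)$ within distance $1/j$ of it with $\alpha>1-1/j$) exhibits the limit as a member of the upper limit. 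Since ${\rm Gr}(A^{app})={\rm Gr}(A^*)\cap\overline{\rm Lim}_{\alpha\uparrow 1}{\rm Gr}(A_\alpha)$ and ${\rm Gr}(A^*)$ is Borel by Theorem~\ref{teor1}, the graph ${\rm Gr}(A^{app})$ is Borel and, by construction, contained in ${\rm Gr}(A^*)$; for fixed $x$, $A^{app}(x)$ is the intersection of the compact set $A^*(x)$ (Theorem~\ref{teor1}) with the closed cut $\{a:(x,a)\in\overline{\rm Lim}_{\alpha\uparrow 1}{\rm Gr}(A_\alpha)\}$, hence compact.

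For nonemptiness, fix $x\in\X$. Using the definition (\ref{eq131}) of $u$ (and $u(x)<\infty$), I would choose sequences $\alpha_n\uparrow 1$ and $y_n\to x$ with $u_{\alpha_n}(y_n)\to u(x)$; then $v_{\alpha_n}(y_n)=u_{\alpha_n}(y_n)+m_{\alpha_n}<\infty$ for large $n$ (recall $m_\alpha<\infty$ for $\alpha$ near $1$ because $\overline{w}<\infty$), so by Theorem~\ref{prop:dcoe} one may pick $\alpha_n$-discount-optimal actions $a_n\in A_{\alpha_n}(y_n)$. The discounted optimality equation, written as in (\ref{eq12}), gives
\[
(1-\alpha_n)m_{\alpha_n}+u_{\alpha_n}(y_n)=c(y_n,a_n)+\alpha_n\int_\X u_{\alpha_n}(z)q(dz|y_n,a_n),
\]
and since $u_{\alpha_n}\ge 0$, $\slim_{\alpha\uparrow 1}(1-\alpha)m_\alpha=\overline{w}<\infty$, and $u_{\alpha_n}(y_n)$ is bounded, the left-hand side bounds $c(y_n,a_n)$ from above, uniformly in large $n$. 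Assumption~(${\rm \bf W^*}$)(ii) then provides a limit point $a\in A(x)$ of $\{a_n\}$; passing to the corresponding subsequence, $(y_n,a_n)\to(x,a)$ with $a_n\in A_{\alpha_n}(y_n)$ and $\alpha_n\uparrow 1$, so $(x,a)\in\overline{\rm Lim}_{\alpha\uparrow 1}{\rm Gr}(A_\alpha)$.

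To place $a$ in $A^*(x)$, I would pass to the limit along that subsequence in the displayed identity. On the right-hand side, lower semicontinuity of $c$ handles the first term, while for the integral I would invoke the weak continuity of $q$ from Assumption~(${\rm \bf W^*}$)(iii), so that $q(\cdot|y_n,a_n)\to q(\cdot|x,a)$ weakly, together with Lemma~\ref{lemma2} applied to the functions $u_{\alpha_n}$; the key elementary observation is that $\ilim_{n\to\infty,\,z'\to z}u_{\alpha_n}(z')\ge u(z)$ for every $z$, directly from the definition (\ref{eq131}) (every $\lambda<u(z)$ is eventually dominated by $u_\alpha$ on a neighborhood of $z$ for $\alpha$ close to $1$, in particular for $\alpha=\alpha_n$ with $n$ large). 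Hence (recalling the two sides coincide for each $n$) the $\liminf$ of the right-hand side is at least $c(x,a)+\int_\X u(y)q(dy|x,a)$, while the $\limsup$ of the left-hand side is at most $\overline{w}+u(x)$. Therefore $\overline{w}+u(x)\ge c(x,a)+\int_\X u(y)q(dy|x,a)$, that is, $a\in A^*(x)$, and so $a\in A^{app}(x)$.

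Finally, since ${\rm Gr}(A^{app})$ is Borel, every $A^{app}(x)$ is a nonempty compact (hence $\sigma$-compact) set, and ${\rm proj}_\X{\rm Gr}(A^{app})=\X$, the Arsenin-Kunugui theorem produces a Borel map $\phi^{app}\in\mathbb{F}$ with $\phi^{app}(x)\in A^{app}(x)$ for all $x$; any stationary policy $\phi$ with $\phi(x)\in A^{app}(x)\subseteq A^*(x)$ is average-cost optimal by Theorem~\ref{teor1}. I expect the nonemptiness step to be the main obstacle: the delicate points are the uniform-in-$n$ upper bound on $c(y_n,a_n)$ needed to apply the inf-compactness condition~(${\rm \bf W^*}$)(ii), and the Fatou-type passage to the limit in the integral term, where one must reconcile the diagonal lower limit of the $u_{\alpha_n}$ with the function $u$ itself.
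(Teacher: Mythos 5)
Your proposal is correct and follows essentially the same route as the paper: extract $y_n\to x$, $\alpha_n\uparrow 1$ with $u_{\alpha_n}(y_n)\to u(x)$, pick discount-optimal actions $a_n\in A_{\alpha_n}(y_n)$, bound $c(y_n,a_n)$ from above to invoke Assumption~(${\rm \bf W^*}$)(ii), pass to the limit in the discounted optimality equation via Lemma~\ref{lemma2}, and finish with the Arsenin--Kunugui theorem and Theorem~\ref{teor1}. The only (harmless) deviation is that you apply Lemma~\ref{lemma2} directly to the $u_{\alpha_n}$ using the observation that their diagonal $\liminf$ dominates $u$, whereas the paper routes this through the regularizations $\underline{u}_{\alpha_n}$ and Corollary~\ref{cor1}; both are valid and of the same depth.
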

\begin{proof}
Let us fix an arbitrary $x\in\X$. From (\ref{eq131}) (the
definition of $u$), there exists $\{y_n,\alpha_n\}_{n\ge
1}\subseteq \X\times(0,1)$ such that $y_n\to x$,
$\alpha_n\uparrow1$, $u_{\alpha_n}(y_n)\to u(x)$, $n\to+\infty$.

Let us choose an arbitrary $\varepsilon^*>0$ and $b_n\in
A_{\alpha_n}(y_n)$, $n\ge 1$. Since
$\overline{w}=\slim\limits_{\alpha\uparrow 1}(1-\alpha)m_{\alpha}$,
there exists $N\ge 1$ such that $ u(x)+\frac{\varepsilon^*}{2} \ge
u_{\alpha_n}(y_{n})$ and $ \overline{w} +\frac{\varepsilon^*}{2} \ge
(1-\alpha_n)m_{\alpha_n}$ for all $n\ge N.$

By definition of the sets $A_\alpha(\cdot)$, for each $n\ge N$
$$
(1-\alpha_n)m_{\alpha_n}+u_{\alpha_n}(y_{n})=c(y_{n},b_{n})+\alpha_n\int_\X
u_{\alpha_n}(y)q(dy|y_{n},b_{n})
=\eta_{u_{\alpha_n}}^{\alpha_n}(y_n,b_n).
$$
Thus, for all $n\ge N$
$$
\overline{w}+\varepsilon^*+u(x)>
\eta_{u_{\alpha_n}}^{\alpha_n}(y_n,b_n)\ge
\eta_{U_{\alpha_n}}^{\alpha_n}(y_n,b_n)\ge\eta_{\underline{u}_{\alpha_n}}^{\alpha_n}(y_n,b_n)\ge
c(y_n, b_n) .
$$
Therefore, because of Assumption~(${\rm \bf W^*}$)(ii), the
sequence $\{b_n\}_{n\ge 1}$ has a subsequence $\{b_{n_k}\}_{k\ge
1}$ such that $b_{n_k}\to a$, as $k\to+\infty$, for some $a\in
A(x)$. Thus, $(x,a)\in \mathop{\rm \overline{Lim}}\limits_{\alpha
\uparrow 1} {\rm Gr}(A_\alpha)$.

Let us prove that $(x,a)\in {\rm Gr}(A^*)$. Indeed, as
$\alpha_{n_k}\underline{u}_{\alpha_{n_k}}(\cdot)\uparrow
u(\cdot)$, $k\to+\infty$, then due to Lemma~\ref{lemma2} and
Corollary~\ref{cor1}, %the next inequality is fulfilled:
\[
\ilim\limits_{k\to +\infty} \alpha_{n_k}\int_\X
\underline{u}_{\alpha_{n_k}}(x)q(dy|y_{n_k},b_{n_k}) \ge \int_\X
u(x)q(dy|x,a).
\]
Thus, by Lemma~\ref{lm1}, $ \overline{w}+\varepsilon^*+u(x)\ge
\eta_{u}^1(x,a), $ and this is true for any $ \varepsilon^*>0 .$
This implies $ \overline{w}+u(x)\ge \eta_{u}^1(x,a). $ This
inequality means that $(x,a)\in {\rm Gr}(A^*)$ and $A^{app}(x)\ne
\emptyset$, since $(x,a)\in \mathop{\rm
\overline{Lim}}\limits_{\alpha \uparrow 1} {\rm Gr}(A_\alpha)$.
The set $A^{app}(x)$ is compact because of the
 closureness of $\mathop{\rm
\overline{Lim}}\limits_{\alpha \uparrow 1} {\rm Gr}(A_\alpha)$ (see
Zgurovsky et al.~\cite[Chapter~1, p.~3]{ZMK1}) and
 Theorem~\ref{teor1}(b).
The second statement of the theorem follows from the
Arsenin-Kunugui theorem.
\end{proof}

\begin{corollary}\label{cor2}
Under Assumptions~(${\rm \bf W^*}$) and (${\rm \bf \underline{B}}$),
for any stationary average-cost optimal policy $\phi^{app}$, such
that $\phi^{app}(x)\in A^{app}(x)$ for all $x\in \X$, for every
$x\in\X$ there exist $\alpha_{n}(x)\uparrow1$ and $y_n(x)\to x$ as
$n\to +\infty$ such that $a_n(x) \in A_{\alpha_n(x)}(y_n(x))$, $n\ge
1,$ and $\phi^{app}(x)=\lim_{n\to+\infty} a_{n}(x)$.
\end{corollary}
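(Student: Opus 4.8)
The plan is to observe that this corollary is an immediate consequence, state by state, of the membership $\phi^{app}(x)\in A^{app}(x)$ together with the definitions of the set $A^{app}(x)$ and of the upper topological limit $\mathop{\rm \overline{Lim}}\limits_{\alpha\uparrow1}{\rm Gr}(A_\alpha)$ introduced just before Theorem~\ref{teorapp}. First I would fix an arbitrary $x\in\X$. By hypothesis $\phi^{app}$ is a stationary policy with $\phi^{app}(x)\in A^{app}(x)$; note that Theorem~\ref{teorapp} guarantees that at least one such policy exists and that any policy with this property is average-cost optimal, so the statement is not vacuous and the extra optimality assumption in the statement is automatic.

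Next, I would unwind the definition $A^{app}(x)=\{a\in A^*(x):\ (x,a)\in\mathop{\rm \overline{Lim}}\limits_{\alpha\uparrow1}{\rm Gr}(A_\alpha)\}$. The inclusion $\phi^{app}(x)\in A^{app}(x)$ then gives $(x,\phi^{app}(x))\in\mathop{\rm \overline{Lim}}\limits_{\alpha\uparrow1}{\rm Gr}(A_\alpha)$. Applying the definition of the upper topological limit, there exist a sequence $\alpha_n\uparrow1$ and points $(y_n,a_n)\in{\rm Gr}(A_{\alpha_n})$, i.e. $a_n\in A_{\alpha_n}(y_n)$ for every $n\ge1$, such that $(y_n,a_n)\to(x,\phi^{app}(x))$ in $\X\times\mathbb{A}$ as $n\to+\infty$; in particular $y_n\to x$, $\alpha_n\uparrow1$, and $a_n\to\phi^{app}(x)$.

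Finally I would simply relabel these sequences by setting $\alpha_n(x):=\alpha_n$, $y_n(x):=y_n$, and $a_n(x):=a_n$, which yields $\alpha_n(x)\uparrow1$, $y_n(x)\to x$, $a_n(x)\in A_{\alpha_n(x)}(y_n(x))$ for all $n\ge1$, and $\phi^{app}(x)=\lim_{n\to+\infty}a_n(x)$, exactly as claimed. I do not expect any genuine obstacle: the substantive work — producing such approximating triples $\{(\alpha_n,y_n,b_n)\}$ and showing their cluster points land in ${\rm Gr}(A^*)$ — was already carried out in the proof of Theorem~\ref{teorapp}, and this corollary only records the converse reading, namely that every action prescribed by an optimal policy selected from $A^{app}$ is the limit of $\alpha_n$-discount optimal actions taken at states $y_n$ approaching $x$ with $\alpha_n\uparrow1$.
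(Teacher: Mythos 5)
Your proposal is correct and follows essentially the same route as the paper: fix $x\in\X$, use $\phi^{app}(x)\in A^{app}(x)$ to conclude $(x,\phi^{app}(x))\in \mathop{\rm \overline{Lim}}\limits_{\alpha \uparrow 1} {\rm Gr}(A_\alpha)$, and then unwind the definition of the upper topological limit to extract the sequences $\alpha_n(x)\uparrow 1$, $y_n(x)\to x$, and $a_n(x)\in A_{\alpha_n(x)}(y_n(x))$ converging to $\phi^{app}(x)$. Your side remarks (that the optimality hypothesis is automatic by Theorem~\ref{teorapp} and that the substantive work was done there) match the paper's opening observation that $A^{app}(x)\subseteq A^*(x)$ makes any such policy optimal.
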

\begin{proof}
Following Theorem~\ref{teorapp}, consider a stationary
average-cost optimal policy $\phi^{app}$ such that
$\phi^{app}(x)\in A^{app}(x)$ for all $x\in X$. Furthermore, since
$A^{app}(x)\subseteq A^*(x)$ for all $x\in\X,$ any such a policy
is optimal.
Let us fix an arbitrary $x\in \X$. By definition of $A^{app}(x)$,
we have that $(x,\phi^{app}(x))\in \mathop{\rm
\overline{Lim}}\limits_{\alpha \uparrow 1} {\rm Gr}(A_\alpha)$.
Then, there exist $\alpha_{n}(x)\uparrow1$, $n\to +\infty$, and
$(y_n(x),a_n(x)) \in {\rm Gr}(A_{\alpha_n})$, $n\ge 1$, such that
$(x,\phi^{app}(x))=\lim\limits_{n\to+\infty} (y_n(x),a_{n}(x))$,
i.e. $\phi^{app}(x)=\lim\limits_{n\to+\infty} a_{n}(x)$, where
$a_n(x) \in A_{\alpha_n(x)}(y_n(x))$, $n\ge 1$,
$\alpha_{n}(x)\uparrow1$ and $y_n(x)\to x$ as $n\to +\infty$.
\end{proof}

We remark that, if we replace in (\ref{defsetA*}) the function $u$
with $\tilde u$ defined in (\ref{eq:deftildeu}),
Theorem~\ref{teorapp} and Corollary~\ref{cor2} remain correct.

%\section{Further Properties for Average Cost Optimal Strategies}

Let us set
\[
X_\alpha:=\{x\in\X\,: v_\alpha(x)=m_\alpha\},\quad \alpha\in
[0,1).
\]
Under Assumptions~(${\rm \bf G}$), $m_\alpha<\infty$.  If
Assumptions~(${\rm \bf G}$) and (${\rm \bf Wu}$) hold then
Theorem~\ref{prop:dcoe} implies that $X_\alpha$ is a compact set
for each $\alpha\in [0,1)$.  This fact is useful to establish the
validity of Assumptions~(${\rm \bf G}$); see Feinberg and
Lewis~\cite[Lemma 5.1]{FL} and references therein.
\begin{theorem}\label{lemC}
Let Assumptions~(${\rm \bf G}$) and (${\rm \bf Wu}$) hold. Then
there exists a compact set $\mathcal{K}\subseteq\X$ such that
$X_\alpha\subseteq \mathcal{K}$ for each $\alpha\in [0,1)$.
\end{theorem}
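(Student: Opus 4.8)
The plan is to show that the sets $X_\alpha$ are all contained in a single sublevel set of the inf-compact function $v_1$, which is compact. First I would recall that under Assumption~(${\rm \bf Wu}$), Theorem~\ref{prop:dcoe}(vi) gives that the function $v_1$ is inf-compact on $\X$; moreover, since $c$ can be shifted by a constant without affecting any of the relevant structures, I will assume without loss of generality that $c\ge 0$, so that $v_\alpha$ and $m_\alpha$ are nonnegative and nondecreasing in $\alpha$. The key point is to find a finite constant $M$ such that $v_\alpha(x)\le v_1(x)+M$ for all $x\in\X$ and all $\alpha\in[0,1)$; this is essentially the bound established inside the proof of Lemma~\ref{lemma1}.

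Concretely, by (\ref{eq:estlambda*1}) in the proof of Lemma~\ref{lemma1}, $\lambda^*:=\sup_{\alpha\in[0,1)}(1-\alpha)m_\alpha<\infty$ under Assumption~(${\rm \bf G}$) (together with monotonicity of $m_\alpha$ coming from $c\ge 0$). The optimality equation~(\ref{eq12}), $(1-\alpha)m_\alpha+u_\alpha(x)=\min_{a\in A(x)}[c(x,a)+\alpha\int_\X u_\alpha(y)q(dy|x,a)]$, combined with $u_\alpha\ge 0$, yields $u_\alpha(x)\ge c(x,\phi(x))\ge$ ... more precisely, as noted in Lemma~\ref{lemma1}, one gets $u_\alpha(x)\ge v_1(x)-\lambda^*$ for all $x\in\X$ and all $\alpha\in[0,1)$. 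Now if $x\in X_\alpha$, then $v_\alpha(x)=m_\alpha$, i.e.\ $u_\alpha(x)=0$, and therefore $v_1(x)\le\lambda^*$. Hence $X_\alpha\subseteq\mathcal{D}_{v_1}(\lambda^*)$ for every $\alpha\in[0,1)$.

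Finally, set $\mathcal{K}=\mathcal{D}_{v_1}(\lambda^*)=\{x\in\X:\,v_1(x)\le\lambda^*\}$. Since $v_1$ is inf-compact on $\X$ by Theorem~\ref{prop:dcoe}(vi), the set $\mathcal{K}$ is compact, and we have just shown $X_\alpha\subseteq\mathcal{K}$ for all $\alpha\in[0,1)$, which is the assertion of the theorem. (The reduction to $c\ge 0$ at the start is harmless because adding a constant $K$ to $c$ changes $v_\alpha$ by $K/(1-\alpha)$ and $m_\alpha$ by $K/(1-\alpha)$, leaving $u_\alpha$, and hence each set $X_\alpha$, unchanged; it also only shifts $v_1$ by a constant, preserving inf-compactness.)

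I expect the main obstacle—really the only nontrivial ingredient—to be the uniform lower bound $u_\alpha(x)\ge v_1(x)-\lambda^*$, but this is already extracted verbatim in the proof of Lemma~\ref{lemma1}, so in the present write-up it can simply be invoked. The rest is a one-line sublevel-set argument together with the inf-compactness supplied by Theorem~\ref{prop:dcoe}(vi).
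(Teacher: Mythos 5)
Your proof is correct and follows essentially the same route as the paper: the paper takes $\mathcal{K}=\mathcal{D}_{\underline{u}_0}(0)$ and invokes the inf-compactness of $\underline{u}_0$ from Lemma~\ref{lemma1}, whose proof rests on exactly the estimate $u_\alpha(x)\ge v_1(x)-\lambda^*$ that you use directly to obtain $X_\alpha\subseteq\mathcal{D}_{v_1}(\lambda^*)$. One small caveat: your opening claim that the key point is a bound of the form $v_\alpha(x)\le v_1(x)+M$ is stated in the wrong direction (and is not what is needed), but this is harmless since the inequality you actually apply, $u_\alpha\ge v_1-\lambda^*$ with $v_1=v_{1,1}$ the one-step value function, is the correct one extracted from the proof of Lemma~\ref{lemma1}.
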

\begin{proof}
From Assumption~(${\rm \bf G}$) and Theorem~\ref{prop:dcoe} we
have that for each $\alpha\in [0,1)$
\[
\emptyset\ne X_\alpha=\{x\in\X\, : \, u_\alpha(x)=0\}=
\mathcal{D}_{u_\alpha}(0)\subseteq\mathcal{D}_{U_\alpha}(0)\subseteq
\mathcal{D}_{{\underline{u}}_\alpha}(0)\subseteq\mathcal{D}_{{\underline{u}}_{0}}(0).
\]
In virtue of Lemma~\ref{lemma1}, we have that
${\underline{u}}_{0}:\X\to[0,+\infty)$ is inf-compact function on
$\X$. Setting $\mathcal{K}= \mathcal{D}_{{\underline{u}}_{0}}(0)$,
we obtain the statement of the theorem.
\end{proof}

\section{Illustrative Example}\label{s7}

%At first consider a stochastic control system of the form
%\begin{equation}\label{eq1_18.11.11}
%x_{n+1}=F(x_n,a_n,\eta_n),\,\, n=0,1,...
%\end{equation}
%where $\{\eta_n\}$ is a sequence of independent and identically
%distributed (i.i.d) random vectors with valued in some Borel space
%$\mathbb{S}.$ In (\ref{eq1_18.11.11}), $x_n\in \X$ and $a_n\in
%{A}(x_n)$ denote the state of the system and the control variable
%at time $n,$ respectively, and $F$ is a given measurable function
%from ${\rm Gr}(A)\times \mathbb{S}$ to $\X.$ Denoting by $\mu$ the
%common distribution of the disturbances $\eta_n,$ the transition
%law of the system can be written as
%$$
%q(B|x,a)=\int_{\mathbb{S}} \mathbb{I}_{B}[F(x,a,s)]\mu(ds),\,\,
%B\in \mathcal{B}(\X),
%$$
%where $\mathbb{I}_B$ denotes the indicator function of $B.$ It is
%then clear that if $(x,a)\to F(x,a,s)$ is \textbf{continuous} on
%${\rm Gr}(A)$ for every $s\in \mathbb{S},$ then Assumption (${\rm
%\bf W^*}$)(iii) holds.
%
%We also remark that if $\X$ is \textbf{discrete} space, then
%Assumptions (${\rm \bf W^*}$)(i)--(ii) are transformed into the
%next: $c(x,\cdot)$ is inf-compact on $A(x)$ for any $x\in \X$ and
%$c$ is bounded below on ${\rm Gr}(A)$.
%
%If additionally, $\mathbb{A}$ is also \textbf{discrete} space, then
%Assumptions (${\rm \bf W^*}$)(i)--(ii) are transformed into the
%next: for any $x\in \X$ and $\lambda\in\mathbb{R}$ the space
%$\mathcal{D}_{c(x,\cdot)}(\lambda)$ is finite.

 The following example is from
Hern\'andez-Lerma~\cite{HLerma}.  Let
\[
x_{n+1}=\gamma x_n+\beta a_n+\xi_n,\qquad n=0,1,...,
\]
and
\[
c(x,a)=qx^2+ra^2,
\]
where (a) $q$ and $r$ are positive constants, $\gamma$ and $\beta$
are two constants satisfying $\gamma\beta>0$, and (b) $\xi_n$ are
independent and identically distributed (iid) random variables with
zero mean, finite variance, and continuous density.

This problem is solved in Hern\'andez-Lerma~\cite{HLerma}, where a
stationary average-cost optimal policy is computed. This problem
corresponds to an MDP with $\X=\A=\R$ and with setwise continuous
transition probabilities. However, if $\xi_n$ do not have a
density, the transition probability may not be setwise continuous,
but they are weakly continuous; see Feinberg and Lewis~\cite[p.
48]{FL1} for detail. If $\xi_n$ are arbitrary iid random variables
with zero mean and finite variance, this problem satisfies
Assumption~(${\rm \bf Wu}$) and, similarly to the case when there
are densities, it satisfies Assumption~(${\rm \bf B}$). Thus,
Theorem~\ref{teor3} can be applied. The optimal policy provided in
Hern\'andez-Lerma~\cite{HLerma} is also optimal when $\xi_n$ may
not have a density.

\appendix \section{Proof of Lemma~\ref{lemma2}}

\begin{proof} First, we prove the lemma for uniformly bounded above
functions $h_n$.  Let  $h_n(s)\le K<\infty$ for all $n= 1,2,...$
and all $s\in S$. For $n=1,2,\ldots$ and $s\in S$, define
\[H_n(s)=\inf\limits_{m\ge n} h_m(s)\quad {\rm and}\quad
\underline{h}_n(s)=\ilim\limits_{s'\to s}H_n(s').
\]
The functions $\underline{h}_n:S\to [0,+\infty)$, $n=1,2,\ldots,$
are lower semi-continuous; see, for example, Feinberg and Lewis
\cite[Lemma~3.1]{FL}).  In addition, for $s\in S$

\begin{equation}\label{equkb0}\underline{h}_n(s)\downarrow \underline{h}(s)\qquad {\rm
as\quad
}n\to\infty.% \underline{h}(s)=\sup\limits_{n\ge 1} s\in S,
\end{equation}
%where $\forall n\ge 1$, $\forall s\in S$
Weak convergence of $\{\mu_n\}_{n\ge 1}$ to $\mu$ is equivalent to
\begin{equation}\label{eq15}
\ilim\limits_{n\to +\infty}\mu_n(A)\ge\mu(A)\qquad {\rm for\ all}\
A\in\mathcal{O},
\end{equation}
where $\mathcal {O}$ is the family of all open subsets of the
space $S;$ Billingsley \cite[Theorem 2.1]{Bil}.

Fix an arbitrary $t>0.$ By (\ref{equkb0}), if $h(s)>t$ then
$\underline{h}_n(s)>t$, $ n= 1,2,\ldots,$ and
\begin{equation}\label{appunion}
\left\{s\in S\,:\, h(s)>t\right\}=\bigcup\limits_{n\ge 1}S_{n},
\end{equation}
where
\[
S_{n}=\{s\in S\,:\, \underline{h}_n(s)>t\},\quad n= 1,2,\ldots,
\]
are open sets, since the functions $\underline{h}_n: S\to
\mathbb{R}_+$ are lower semi-continuous. In addition,
\begin{equation}\label{apmonot}
S_{n}\subseteq S_{n+1},\qquad n=1,2,\dots\ .
\end{equation}
Thus, %Since $A_{n}\subseteq A_{n+1}$ $\forall n\ge 1,$ then, due

\[
\mu(\{s\in S\,:\, h(s)> t\})=\lim\limits_{n\to +\infty}\mu(S_{n})\le
\lim\limits_{n\to +\infty}\ilim\limits_{m\to
+\infty}\mu_m(S_{n})%\le
\]
\[
\le \slim\limits_{n\to +\infty}\ \ilim\limits_{m\to
+\infty}\mu_m(S_{m})= \ilim\limits_{n\to
+\infty}\mu_n(S_{n})=\ilim\limits_{n\to +\infty}\mu_n (\{s\in S\,:\,
\underline{h}_n(s)> t\}),
\]
where the first equality follows from (\ref{apmonot}) and
(\ref{appunion}), the first inequality follows from to
(\ref{eq15}), and the second inequality follows from
(\ref{apmonot}).

Thus  Serfozo \cite[Lemma 2.1]{Serfozo} yields
\[
\int_S \underline{h}(s)\mu(ds)\le \ilim\limits_{n\to
+\infty}\int_S \underline{h}_n(s)\mu_n(ds)\le \ilim\limits_{n\to
+\infty}\int_S h_n(s)\mu_n(ds),
\]
where the  second inequality is fulfilled due to
\[
\underline{h}_n(s)\le H_n(s)\le h_n(s),\qquad  s\in S, \  n=
1,2,\ldots\ .
\]

\textbf{Case~2.} Consider a sequence $\{h_n\}_{n\ge 1}$ of
measurable nonnegative $\overline{\mathbb{R}}$-valued functions on
$S$. For  $\lambda>0$  set
$h_n^{\lambda}(s):=\min\{h_n(s),\lambda\}$, $s\in S$, $n=
1,2,\ldots\ $. Since the functions $h_n^{\lambda}$ are uniformly
bounded above,
\[
\int_S \underline{h}^{\lambda}(s)\mu(ds)\le \ilim\limits_{n\to
+\infty}\int_S h_n^{\lambda}(s)\mu_n(ds)\le \ilim\limits_{n\to
+\infty}\int_S h_n(s)\mu_n(ds),
\]
where $\underline{h}^{\lambda}(s)=\ilim\limits_{n\to+\infty,\, s'\to
s}h_n^{\lambda}(s')$, $\lambda>0$, $s\in S$.

Then, using Fatou's lemma,
\[
\int_S \underline{h}(s) \mu(ds)\le
\ilim\limits_{\lambda\to+\infty} \int_S \underline{h}^{\lambda}(s)
\mu(ds).
\] \noindent
\end{proof}

\vspace{.3cm}
 {\bf Acknowledgements.}
 Research of the first
author was partially supported by NSF grant  CMMI-0900206. The
authors thank Professor M.Z.~Zgurovsky for initiating their
research cooperation.


\begin{thebibliography}{99}

\bibitem{Arap}  Arapostathis, A., V.~S. Borkar, E. Fernandez-Gaucherand, M.~K. Ghosh and S.~I.
Marcus. 1993. Discrete time controlled Markov processes with
average cost criterion: a survey, \textit{SIAM J. Control Optim.}
\textbf{31}(2) 282--344.

%\bibitem{ObFr}  Aubin, J.-P., H.~Frankowska. 1990.  \textit{ Set-valued
%analysis}. Birkh${\rm\ddot{a}}$user, Boston.

\bibitem{Bath}
Bather, J. 1973. Optimal decision procedures for finite Markov
chains. {P}art {I}:
  Examples.
\textit{Adv. in Appl. Probab.} \textit{5}(2) 328--339.

\bibitem{Ber} Berge, E. 1963. \textit{Topological Spaces.}
Macmillan, New York.

\bibitem{Bert} Bertsekas, D.~P., S.~E. Shreve. 1996. \textit{Stochastic Optimal Control: The Discrete-Time
Case.} Athena Scientific, Belmont, MA.

\bibitem{Bil} Billingsley, P. 1968. \textit{Convergence of Probability Measures}.
Jonh Wiley, New York.

\bibitem{Blac}
Blackwell, D. 1962. Discrete dynamic programming. \textit{Ann.
Math. Statist.} \textbf{33}(2) 719--726.

%\bibitem{Bogachev} Bogachev, V. 2007. \textit{Measure Theory. Volume
%II}. Springer-Verlag, Berlin Heidelberg.

\bibitem{CC} Cavazos-Cadena, R. 1991. A counterexample on the optimality
equation in Markov decision chains with the average cost
criterion. \textit{Systems \& Control Lett.} \textbf{16}(5)
387-–392.


   \bibitem{CF} Chen, R.~C., E.~A. Feinberg. 2010. Compactness of the space of
   non-randomized policies in countable-state sequential decision processes. \textit{Math. Methods Oper. Res.} \textbf{71}(2) 307--323.


\bibitem{Chit}
Chitashvili, R.~Y. 1975. A controlled finite {M}arkov chain with
an arbitrary set of
  decisions.
\textit{Theor. Probability Appl.} \textbf{20}(4) 839--847.

%\bibitem{chapter2cl} Clarke, F.~H. 1983. \textit{Optimization and Nonsmooth
%Analysis.} John Wiley $\&$ Sons, Inc., New York.

\bibitem{Der}
Derman, C.  1962.
 On sequential decisions and {M}arkov chains.
\textit{Management Sci}. \textit{9}(1) 16--24.

\bibitem{DY}
 Dynkin, E.~B.,  A.~A. Yushkevich. 1979. \textit{{C}ontrolled
{M}arkov {P}rocesses}. Springer-Verlag, New York.

\bibitem{Fein1980}
Feinberg, E.~A. 1980. An $\epsilon$-optimal control of a finite
Markov chain. \textit{Theor. Probability Appl. } \textbf{25}(1)
70--81.

\bibitem{FL1} Feinberg, E.~A., M.~E. Lewis. 2004. Optimality of four-threshold
policies in inventory systems with customer returns and
borrowing/storage options. \textit{Probab. Engrg. Inform. Sci. }
\textbf{19}(1) 45--71.

\bibitem{FL} Feinberg, E.~A., M.~E. Lewis. 2007. Optimality inequalities for average
cost Markov decision processes and the stochastic cash balance
problem. \textit{Math. Oper. Res.}
 \textbf{32}(4) 769--783.

\bibitem{GS} Gubenko, L.~G., E.~S. Shtatland 1975. On controlled,
discrete-time Markov decision processes. \textit{Theory Probab.
Math. Statist.} \textbf{7} 47--61.

\bibitem{HLerma} Hern\'{a}ndez-Lerma, O. 1991. Averege
optimality in dynamic programming on Borel spaces - Unbounded
costs and controls. \textit{Systems \& Control Lett.}
\textbf{17}(3) 237--242.

\bibitem{HLerma1} Hern\'{a}ndez-Lerma, O., J.~B. Lassere. 1996.
\textit{Discrete-Time Markov Control Processes: Basic Optimality
Criteria}. Springer, New York.

\bibitem{HLL2000} Hern\'{a}ndez-Lerma, O., J.~B. Lassere. 2000.
Fatou's lemma and Lebesgue's convergence theorem for measures.
\textit{J. Appl. Math. Stoch. Anal.} {\bf 13}(2) 137--146.


%\bibitem{HLMO17} Hern$\acute{a}$ndez-Lerma, O., W. Runggaldier. 1994.
%Monotone approximations for convex stochastic control problems.
%\textit{J. Math. Syst., Estimation, and Control}. \textbf{4}
%99--140.

\bibitem{Kec} Kechris, A.S. 1995.  \textit{Classical Descriptive Set
Theory.}  Springer-Verlag, New York.

\bibitem{LVHL}
Luque-V\'asquez, F., O. Hern\'andez-Lerma. 1995.  A counterexample
on the semicontinuity of minima. \textit{Proc. Amer. Math. Soc.}
 \textbf{123}(10) 3175--3176.

\bibitem{Ross68}
 Ross, S.~M. 1968. non-discounted denumerable {M}arkovian decision
model. \textit{Ann. Math. Statist.} \textbf{39}(2) 412--424.


\bibitem{Ross68a}
Ross, S.~M.  1968a. Arbitrary state {M}arkovian decision
processes. \textit{Ann. Math. Statist.} \textbf{39}(6) 2118--2122.


\bibitem{Ross71}
 Ross, S.~M. 1971. On the nonexistence of $\epsilon$-optimal
randomized stationary
  policies in average cost {M}arkov decision models.
\textit{Ann. Math. Statist.} \textbf{42}(5) 1767--1768.

\bibitem{Schal} Sch\"{a}l, M. 1993. Average
optimality in dynamic programming with general state space.
\textit{Math. Oper. Res}. \textbf{18}(1) 163--172.

%\bibitem{Sm1} Schocheman, I.E., R.L. Smith. 1998. Existence and
%Discovery of Average Optimal Solutions in Deterministic Infinite
%Horizon Optimization. \textit{Mathematics of Operations Research}.
%\textbf{23}(2) 416--432.

\bibitem{Senb}
Sennott, L.~I.  1999. \textit{Stochastic Dynamic Programming and
the Control of Queueing
  Systems}.
John Wiley and Sons,
  New York.

  \bibitem{Sens}
Sennott, L.~I.   2002.   Average reward optimization theory for
denumerable state spaces. E. A. Feinberg, A. Shwartz, eds.
\textit{Handbook of Markov Decision Processes. Methods and
Applications.} Kluwer, Boston, 153–-172.

\bibitem{Serfozo} Serfozo, R. 1982. Convergence of Lebesgue integrals with
varying measures. \textit{The Indian Journal of Statistics}
\textit{(Series A)}. \textbf{44} 380--402.

\bibitem{Taylor}
 Taylor, III, H.~M.. 1965. Markovian sequential replacement
processes. \textit{Ann. Math. Statist.} \textbf{36}(6) 1677-1694.


\bibitem{VS}
Viskov, O.~V., A.~N. Shiryaev. 1964. On controls which reduce to
optimal stationary regimes, \textit{Trudy Mat. Inst. Steklov}.
\textbf{71} 35–45 (in Russion; English translation: Report Number
FTD-HT-67-69, National Technical Information Service, U.S.
Department of Commerce).


%\bibitem{Sm2} Wachs, A.O., I.E. Schocheman, R.L. Smith. 2011.
%Average Optimality in Nonhomogeneous Infinite Horizon Markov
%Decision Processes. \textit{Mathematics of Operations Research}.
%\textbf{36} 147--164.




\bibitem{ZMK1}  Zgurovsky, M.~Z., V.~S. Mel'nik, P.~O. Kasyanov. 2011.
\textit{Evolution Inclusions and Variation Inequalities for Earth
Data Processing I.} Springer, Berlin.


\end{thebibliography}
\end{document}